\tikzset{->-/.style={decoration={markings,
  mark=at position .5 with {\arrow{>}}},postaction={decorate}}}
\tikzset{->--/.style={decoration={markings,
  mark=at position .25 with {\arrow{>}}},postaction={decorate}}}
\tikzset{-<-/.style={decoration={markings,
  mark=at position .45 with {\arrow{<}}},postaction={decorate}}}
\tikzset{->>-/.style={decoration={markings,
  mark=at position .6 with {\arrow{>}}},postaction={decorate}}}
\tikzset{-<>-/.style={decoration={markings,
  mark=at position .6 with {\arrow{>}},mark=at position .45 with {\arrow{<}}},postaction={decorate}}}
\title{Linear extensions and directed clique counts via modular partitions}
\author[1]{Daniela Egas Santander}
\author[2]{Matteo Santoro}
\author[3]{Jason P. Smith}
\affil[1]{Max Planck Institute of Molecular Cell Biology and Genetics and Center for Systems Biology, Dresden, Germany}
\affil[2]{SISSA, Trieste, Italy}
\affil[3]{Nottingham Trent University, Nottingham, UK}
\date{}
\newtheorem{theorem}{Theorem}
\newtheorem{lemma}[theorem]{Lemma}
\newtheorem{lemma_definition}[theorem]{Lemma/Definition}
\newtheorem{proposition}[theorem]{Proposition}
\newtheorem{example}[theorem]{Example}
\newtheorem{remark}[theorem]{Remark}
\newtheorem{corollary}[theorem]{Corollary}
\newtheorem{definition}[theorem]{Definition}
\newtheorem{question}[theorem]{Question}
\newtheorem{notation}[theorem]{Notation}
\newtheorem*{claim*}{Claim}
\newcommand{\cM}{\mathcal{M}}
\newcommand{\cQ}{\mathcal{Q}}
\newcommand{\cP}{\mathcal{P}}
\newcommand{\cS}{\mathcal{S}}
\newcommand{\bn}{\mathbf{n}}
\newcommand{\CG}{\mathrm{CG}}
\newcommand{\IG}{\mathrm{IG}}
\newcommand{\lex}{\mathrm{LE}}
\begin{document}
	\maketitle
	

\begin{abstract}
    Counting linear extensions is a fundamental problem in poset theory. It is known to be \#P-complete, with polynomial-time formulas available in special cases. In this work, we develop new recursive formulas for counting linear extensions of posets whose modular partitions have particular structure. Specifically, we focus on posets whose incomparability graph has a modular partition with a skeleton that is a tree, a necklace of cliques, or a combination of both. The proofs are constructive and allow for the explicit generation of all linear extensions. We also discuss equivalent formulations of the problem in terms of permutations and directed graphs. 
    The directed graph perspective is related to counting directed simplices in the directed flag complex of a digraph, with applications to understanding higher-order structure in neural circuits.
\end{abstract}

\section{Introduction}
A linear extension of a poset is a total order of the poset elements, which is compatible with its partial order. 
Counting the number of linear extensions of a poset is one of the most fundamental problems in the study of posets, and it is known to be $\# P$-complete \cite{BW91}.
Polynomial time formulas exist for particular classes of graphs, including
 trees \cite{Atk90}, series-parallel posets \cite{Val79}, d-complete posets \cite{Pro14}, and mobile posets \cite{Gar21}, 
 see \cite[Section 12.1]{Cha23} for a comprehensive overview. However, all these classes form a vanishingly small proportion of all posets. 
 On the other hand, numerous bounds for the number of linear extensions exist, of varying tightness and complexity, see~\cite{Cha23}.
Many computationally difficult problems in poset theory can be made tractable by decomposing the poset into simpler parts.
In particular, polynomial time algorithms exist for posets with bounded \emph{treewidth} \cite{Eib19} and \emph{modular width} \cite{Hab87,Dien23}.

A modular partition of a graph is a decomposition of a graph into modules, where all vertices within each module have the same neighbours outwith the module. A modular partition of the comparability graph of a poset can be used to decompose the poset, 
where the vertices of the comparability graph are the elements of the poset and there is an edge between any pair of comparable elements. 
The modular width of a modular partition is the size of the largest module.
In \cite{Hab87} it is shown that given a modular partition of a poset $\cP$, the number of linear extensions of $\cP$ is given by computing the number of linear extensions of each module and the number of linear extensions of the poset obtained by replacing each module by a totally ordered set of the same size. 
This is used to show that the number of linear extensions can be computed in polynomial time, thus the number of linear extensions is computationally bounded by computing the linear extensions of each module.

We prove new recursive formulas for the number of linear extensions when the modular partition has certain structure.
If we further restrict to cases where the modules are always cliques or independent sets, we obtain closed formulas.
The posets considered are those with a modular partition of the comparability graph whose \emph{skeleton graph} is the complement of a tree or necklace of cliques (or some combination of these), where the skeleton of a modular partition is obtained by replacing every module with a single vertex.
The proof of our formula is constructive, thus can be used to explicitly list all linear extensions. See Figure~\ref{fig:main_results} for examples of the main results.

Counting linear extensions has equivalent formulations in other fields of mathematics.
They are closely linked to permutations, since a linear extension can be thought of as a permutation of the poset elements.
Counting linear extensions is equivalent to counting permutations where the inversion set is a subset of the set of incomparable pairs of the poset.
This equivalent reformulation has brought forth several applications: estimating the complexity of counting linear extensions \cite{BW91, dittmer2018counting}, methods to uniformly sample linear extensions \cite{PR94,huber2025generating} and to estimate the number of linear extensions \cite{banks2010using}. Moreover, in the other direction linear extensions have been used to understand permutation statistics \cite{BjWa91} and count permutation patterns and related concepts \cite{EN12,yakoubov2015pattern,cooper2016complexity}.
On the other hand, counting linear extensions is equivalent to counting the number of edge-induced acyclic tournaments in an acyclic tournament with added reciprocal edges. This perspective, in turn, models a problem in topology concerning how the number of directed simplices in an oriented directed graph changes when reciprocal connections are added.

These different interpretations of the counting problem allow this question to address several applications across the sciences.
In decision making, these methods can be used to estimate ranking probabilities from partial orders \cite{lerche2003evaluation} and in determining the costs of sorting algorithms \cite{schonhage1976production}.
They also arise in computer science, where counting linear extensions can be used to determine efficient schedules for parallel or constrained tasks \cite{cato1995parallel,crampton2004algebraic}.
Finally, the directed graph interpretation has applications in the sciences, particularly in neuroscience, where it is known that neural networks exhibit an over-expression of simplices and reciprocal connections, both of which affect network function~\cite{sizemore2018cliques, tadic2019functional, sizemore2019importance, andjelkovic2020topology, shi2021computing, ecker2024cortical, santander2025heterogeneous}. Counting linear extensions captures the interaction between reciprocal connections and directed simplices in such networks.

The structure of the paper is as follows.  In Section 2 we introduce the necessary background on linear extensions and modular partitions. In Section 3 we present three formulas obtained by restricting our main result to specific cases. In Section 4 we present our most general result, its proof and an exploration of when a modular partition of the comparability graph is not a modular partition of the associated poset. We finish by presenting some applications in Section 5.

\section{Preliminaries}
\subsection{Basic definitions, notation, and conventions}

We begin by establishing notation and recalling some standard graph theory terminology, for any graph theory definitions we omit see \cite{Die12} for a detailed background.
Unless otherwise stated we consider all graphs to be directed, with no loops or multi edges (i.e. at most one edge from $i$ to $j$) but we do allow bidirectional connections, that is, the edges from $i$ to~$j$ and from $j$ to $i$ can exist at the same time.

\begin{notation}
We denote by $[n]$ the set $[n]=\{1,2,\ldots, n\}$.
For a graph $G$, we denote the vertex and edge sets of $G$ by $V(G)$ and $E(G)$, respectively.
We denote by $i\to j$ an edge from $i$ to $j$ in~$G$. 
If $G$ is undirected we denote by $i- j$ an edge between $i$ and $j$ in $G$. 
The \emph{neighbours} of a vertex $v$ are the vertices in $V(G)$ that are connected by an edge to $v$.  
If $G$ is directed, these vertices can be further classified into \emph{in-neighbours} and \emph{out-neighbours}, depending if they are connected via an edge to $v$ or from $v$, respectively.
\end{notation}

\begin{definition}[Subgraphs]
 An \emph{edge-induced} subgraph $H$ of $G$ is obtained by taking a set $X\subseteq E(G)$ and setting $E(H)=X$ and $V(H)$ as all end points in $X$. 
 A \emph{vertex-induced} subgraph $H$ of $G$ is obtained by taking a set $X\subseteq V(G)$ and setting $V(H)=X$ and $E(H)$ edges in $E(G)$ where both end points are in $X$, and is denoted by $G|_X$. 
\end{definition}

\begin{definition}[Transitive graphs]
A graph $G$ is said to be \emph{transitive} if whenever $i\to j$ and $j \to k$ are edges in $G$ then $i\to k$ is an edge in $G$. 
The \emph{transitive closure} of a graph $G$ is the smallest transitive graph containing $G$, where by smallest we mean it has the minimal number of edges.  
Equivalently, the transitive closure of $G$ is the graph with the same vertex set as $G$ and edge $i \to j$ whenever there is a directed path from $i$ to $j$ in $G$.
A \emph{transitive ordering} of an undirected graph $G$ is a total ordering~$\prec$ of~$V(G)$ such that if every edge is oriented according to $\prec$, then the resulting directed graph is transitive.
\end{definition}

\begin{definition}
The \emph{complement} $\bar{G}$ of a graph $G$ on $n$ vertices is obtained by $V(\bar{G})=V(G)$ and~$E(\bar{G})=K\setminus E(G)$, where $K$ is the set of all $2$-element subsets of $V(G)$ if $G$ is undirected, and $K$ is the set of all $2$-element ordered pairs of $V(G)$ is $G$ is directed.
\end{definition}

Next we recall the definition of a poset, and associated terminology, for any poset definitions we omit see \cite[Chapter 3]{Sta11} for a detailed background.

\begin{definition}
A \emph{partially ordered set}, or \emph{poset}, is a pair $(\cP,\leq_\cP)$ where $\cP$ is a set and $\leq_\cP$ is a binary relation which is reflexive, antisymmetric and transitive.  If there is no possibility of confusion we will denote $\leq_\cP$ simply by $\leq$ and the tuple  $(\cP,\leq_\cP)$ simply by $\cP$.
Let $s,t \in \cP$,  we write $s<t$ if~$s\leq t$ and $s\neq t$.
We say $t$ \emph{covers} $s$ if~$s< t$ and $[s,t]:=\{u\in P\,|\, s\leq u \leq t\} =\{s,t\}$.  
Given a subset~$S\subseteq\mathcal{P}$, we denote by $\mathcal{P}_{|S}$ the poset on $S$ with the same order relationships as $\mathcal{P}$. 
We say $s$ and~$t$ are \emph{comparable} if either $s\leq t$ or $t\leq s$ and \emph{incomparable} otherwise.

We say that a poset is \emph{totally ordered} or \emph{has a total order} if every pair of elements is comparable, and in this case we call it a \emph{chain}.
Conversely, an \emph{anti-chain} is a poset where every pair of elements is incomparable. 
\end{definition}

\begin{example}
We denote by $\bn$ the chain poset with underlying set $[n]$ with its usual order.  Note that in $\bn$, for any $1\leq i < n$, $i+1$ is the only element that covers $i$.
\end{example}

We can construct several graphs that represent the structure of a poset, see Figure \ref{fig:graphs_poset} for examples of each of the following such graphs.  

\begin{definition}[Graphs associated to a poset]
Any poset $\cP$ has an associated transitive acyclic graph, which we denote by $G_\cP$, 
whose vertices are the elements of $\cP$ and  edges are $s \to t$ whenever~$s< t$.  
In fact, this construction defines a bijection between poset structures on the set $\cP$ and transitive acyclic graphs with vertex set $\cP$.  

The \emph{comparability graph} of $\cP$, denoted by $\CG_\cP$, is the undirected graph with vertices the elements of $\cP$ and an edge $s-t$ whenever $s$ and $t$ are comparable.  In particular, the comparability graph of $\cP$, can be obtained from $G_\cP$ by forgetting the orientation of the edges.
 
The \emph{incomparability graph} of $\cP$, denoted by $\IG_\cP$ is the undirected graph with vertices $\cP$, and an edge $s-t$ whenever $s$ and $t$ are incomparable. 
Thus, the comparability graph is the complement of the incomparability graph.

The \emph{Hasse diagram} of $\cP$ is the undirected graph with vertex set $\cP$ and with edges corresponding to covering relations.  We draw the Hasse diagram of $\cP$ such that if $s\leq t$ then $t$ is drawn ``above" $s$.

\end{definition}
\begin{figure}[h!]
\begin{center}\begin{tikzpicture}[scale=1]
  \def\s{.75}
  \def\h{2}
  \node (a) at (-1.5,\h){\textbf{A}};
  \node[circle, draw=black, scale=\s] (1) at (0,0.75){$1$};
  \node[circle, draw=black, scale=\s] (2) at (1,0){$2$};
  \node[circle, draw=black, scale=\s] (3) at (0.5,-1){$3$};
  \node[circle, draw=black, scale=\s] (4) at (-0.5,-1){$4$};
  \node[circle, draw=black, scale=\s] (5) at (-1,0){$5$};
  \draw[thick,->-] (2) to (1);
  \draw[thick,->-] (3) to (1);
  \draw[thick,->-] (4) to (1);
  \draw[thick,->-] (5) to (1);
  \draw[thick,->-] (3) to (2);
  \draw[thick,->-] (4) to (2);
  \draw[thick,->-] (5) to (2);
  \draw[thick,->-] (4) to (3);
  \draw[thick,->-] (5) to (3);
   \draw[thick,->-] (5) to (4);

  \def\x{4}
  \node (b) at (-1.5+\x,\h){\textbf{B}};
  \node[circle, draw=black, scale=\s] (1) at (0+\x,0.75){$1$};
  \node[circle, draw=black, scale=\s] (2) at (1+\x,0){$2$};
  \node[circle, draw=black, scale=\s] (3) at (0+\x.5,-1){$3$};
  \node[circle, draw=black, scale=\s] (4) at (-0.5+\x,-1){$4$};
  \node[circle, draw=black, scale=\s] (5) at (-1+\x,0){$5$};
  \draw[thick,-] (1) to (2);
  \draw[thick,-] (1) to (3);
  \draw[thick,-] (1) to (4);
  \draw[thick,-] (1) to (5);
  \draw[thick,-] (2) to (3);
  \draw[thick,-] (2) to (4);
  \draw[thick,-] (2) to (5);
  \draw[thick,-] (3) to (4);
  \draw[thick,-] (3) to (5);
   \draw[thick,-] (4) to (5);
  
  \def\x{8}
  \node (c) at (-1.5+\x,\h){\textbf{C}};
  \node[circle, draw=black, scale=\s] (1) at (0+\x,0.75){$1$};
  \node[circle, draw=black, scale=\s] (2) at (1+\x,0){$2$};
  \node[circle, draw=black, scale=\s] (3) at (0+\x.5,-1){$3$};
  \node[circle, draw=black, scale=\s] (4) at (-0.5+\x,-1){$4$};
  \node[circle, draw=black, scale=\s] (5) at (-1+\x,0){$5$};
  
  \def\x{12}
  \node (d) at (-1.5+\x,\h){\textbf{D}};
  \node[circle, draw=black, scale=\s] (5) at (\x,1.5){$5$};
  \node[circle, draw=black, scale=\s] (4) at (\x,1.5-3/4){$4$};
  \node[circle, draw=black, scale=\s] (3) at (\x,1.5-3*2/4){$3$};
  \node[circle, draw=black, scale=\s] (2) at (\x,1.5-3*3/4){$2$};
  \node[circle, draw=black, scale=\s] (1) at (\x,-1.5){$1$};
   \draw[thick,-] (1) to (2);
   \draw[thick,-] (2) to (3);
   \draw[thick,-] (3) to (4);
   \draw[thick,-] (4) to (5);
 \end{tikzpicture}\end{center}
 \caption{Graphs associated to the chain poset $\mathbf{4}$. 
 \textbf{A:} The graph $G_\mathbf{4}$. 
 \textbf{B:} The comparability graph~$\CG_\mathbf{4}$. 
 \textbf{C:} The incomparability graph $\IG_\mathbf{4}$. 
 \textbf{D:} The Hasse diagram of $\mathbf{4}$.}
 \label{fig:graphs_poset}
  \end{figure}
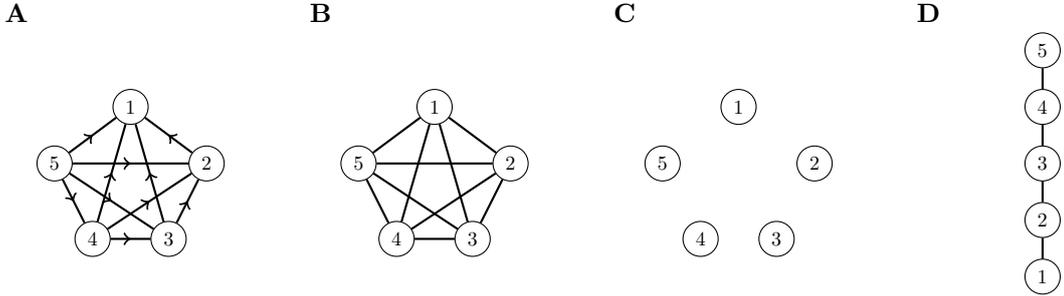

\subsection{Linear extensions of a poset and modular partitions}
We recall the definition of a linear extension of a poset.
\begin{definition}
Given a poset $(\cP,\le)$ a \emph{linear extension} of $\cP$ is a total ordering $\prec$ of the elements of~$\cP$ which preserves the partial order of $\cP$, that is, if $x\le y$ then $x\prec y$.
Equivalently, a linear extension is an order preserving bijection $\ell:\cP\rightarrow \bn$, where $n=|\cP|$ and $\bn$ is the chain with $n$ elements.
We denote the number of linear extensions of a poset $\cP$ by $\lex(\cP)$.
\end{definition}

Counting the number of linear extensions of a poset is a classical problem in poset theory. It is known to be \#P-complete \cite{BW91}, and many interesting sequences of numbers can be found when counting linear extensions. For example, the number of linear extensions of the poset $\mathbf{2}\times\bn$ is the Catalan number $C_n$ \cite[Exercise 177]{stanley2015catalan}, where $\mathbf{2}\times\bn$ is the Cartesian product of the $2$-element chain and $n$-element chain. Also, in \cite{Bri99} they construct the poset $L_n$ whose number of linear extensions is the Fibonacci numbers $F_n$. 

Our main tool for computing the number of linear extensions is modular partitions.
\begin{definition}[Modular partition]
Let $G$ be an undirected graph.  A \emph{module} of $G$ is a non-empty set~$M\subseteq V(G)$ such that all vertices of $M$ have the same neighbours in $V(G)\setminus M$.
 A \emph{modular partition} of $G$ is a partition of its vertex set into modules.
\end{definition}

The definition of a modular partition can be adapted for directed graphs, and thus also for posets. 

\begin{definition}[Modular partition of a digraph and poset]
Let $G$ be a directed graph. A module in~$G$ is a subset $M\subseteq V(G)$ such that all vertices in $M$ have the same in-neighbours and out-neighbours in $V(G)\setminus M$.
 A modular partition of $G$ is a partition of the vertices into modules.
A modular partition $\cM=\{ M_i\subseteq V(G)\}_{i=1}^k$ of a poset $\cP$ is a modular partition of its associated directed acyclic graph $G_\cP$, 
by abuse of notation where there is no confusion we use $M_i$ to denote $\cP|_{M_i}$.
\end{definition}

Note that a modular partition of a poset is a modular partition of the incomparability graph, but a modular partition of the incomparability does not necessarily give a modular partition of the poset.
 This is because whilst the modular partition of the incomparability graph will result in all modules having the same neighbours, it does not force the 
 orientation of the edges between neighbours to be consistent in the directed graph representation of the poset. 

For example, consider the transitive graph on three vertices $i,j,k$ with edges $i \to j$, $j\to k$, and~$i\to j$.  
The partition $M_1 =\{i,k\}$ and $M_2 =\{j\}$ is not a modular partition of the directed graph or its associated poset, but it is a modular partition of its (in)comparability graph.
We will explore this in more detail in Section~\ref{sec:notposet}.

Given a graph with a modular partition we can quotient all the modules together to give the following definition, see Figure \ref{fig:skeleton} for a simple example.

\begin{definition}[Skeleton of a modular partition]
Let $\cM=\{ M_i\subseteq V(G)\}_{i=1}^k$ be a modular partition of a graph $G$. The \emph{skeleton} of $\cM$ is the quotient graph $G/\cM$, where each module is identified to a single vertex.  More precisely, $G/\cM$ has vertices $\{v_1, v_2, \ldots, v_k\}$ and an edge $v_i \to v_j$ if there are edges in $G$ from the vertices in $M_i$ to the vertices in $M_j$.
\end{definition}

\begin{figure}[h!]
\begin{subfigure}{0.3\textwidth}
  \begin{center}
  \begin{tikzpicture}[every node/.style={circle, fill=black, radius=3pt, inner sep=0pt, minimum size=4pt}]
	\node[fill=white] (0) at (-1,3) {\textbf{A}};
	\node[fill=WildStrawberry] (1) at (0,0) {};
	\node[fill=WildStrawberry] (2) at (0.25,0.5) {};
	\node[fill=WildStrawberry] (3) at (-0.25, 0.5) {};
	\node[fill=WildStrawberry] (4) at (0, 1) {};
	\draw (1) -- (2) -- (4);
	\draw (1) -- (3) -- (4);
	\node[fill=blue] (5) at (0, 1.5) {};
	\node[fill=blue] (6) at (0, 2) {};
	\draw (4) -- (5) -- (6);
	\node[fill=JungleGreen] (7) at (0, 2.5) {};
	\node[fill=JungleGreen] (8) at (-0.5, 2.5) {};
	\node[fill=JungleGreen] (9) at (0.5, 2.5) {};
	\node[fill=JungleGreen] (10) at (0, 3) {};
	\draw (6) -- (7) -- (10);
	\draw (6) -- (8) -- (10);
	\draw (6) -- (9) -- (10);
	\node[fill=Goldenrod] (11) at (1.5, 1) {};
	\node[fill=Goldenrod] (12) at (1.75, 1.5) {};
	\node[fill=Goldenrod] (13) at (1.25, 1.5) {};
	\node[fill=Goldenrod] (14) at (1.5, 2) {};
	\draw (11) -- (12) -- (14);
	\draw (11) -- (13) -- (14);
  \end{tikzpicture}
  \end{center}
\end{subfigure}
\hfill
\begin{subfigure}{0.3\textwidth}
  \begin{center}
  \begin{tikzpicture}[every node/.style={circle, fill=black, radius=3pt, inner sep=0pt, minimum size=4pt}]
  		\node[fill=white] (0) at (-2,3) {\textbf{B}};
		\node[fill=WildStrawberry] (1) at (0,0) {};
		\node[fill=WildStrawberry] (2) at (0.25,0.5) {};
		\node[fill=WildStrawberry] (3) at (-0.25, 0.5) {};
		\node[fill=WildStrawberry] (4) at (0, 1) {};
		\node[fill=blue] (5) at (0, 1.5) {};
		\node[fill=blue] (6) at (0, 2) {};
		\node[fill=JungleGreen] (7) at (0, 2.5) {};
		\node[fill=JungleGreen] (8) at (-0.5, 2.5) {};
		\node[fill=JungleGreen] (9) at (0.5, 2.5) {};
		\node[fill=JungleGreen] (10) at (0, 3) {};
		\begin{scope}[yshift=3pt]
		\node[fill=Goldenrod] (11) at (1.5, 1) {};
		\node[fill=Goldenrod] (12) at (1.75, 1.5) {};
		\node[fill=Goldenrod] (13) at (1.25, 1.5) {};
		\node[fill=Goldenrod] (14) at (1.5, 2) {};	
		\end{scope}		
		\draw[dotted] (0, 2.6) ellipse (0.7cm and 0.5cm);
	    \draw[dotted] (1.5, 1.6) ellipse (0.5cm and 0.7cm);
	    \draw[dotted] (0, 1.75) ellipse (0.2cm and 0.5cm);
	    \draw[dotted] (0, 0.5) ellipse (0.5cm and 0.7cm);
	    \begin{pgfonlayer}{bg}
	    \draw[color=gray!30] (2) -- (3);
	    \draw[color=gray!30] (7) -- (8) -- (9);
	    \draw[color=gray!30] (12) -- (13);
		\foreach \n in {11, ..., 14}{
		\draw[color=gray!30] (1) -- (\n);
		\draw[color=gray!30] (2) -- (\n);
		\draw[color=gray!30] (3) -- (\n);
		\draw[color=gray!30] (4) -- (\n);
		\draw[color=gray!30] (5) -- (\n);
		\draw[color=gray!30] (6) -- (\n);
		\draw[color=gray!30] (7) -- (\n);
		\draw[color=gray!30] (8) -- (\n);
		\draw[color=gray!30] (9) -- (\n);
		\draw[color=gray!30] (10) -- (\n);	
	    }
	    \end{pgfonlayer}
		\node [draw=none, fill=none] (S1) at (2.25, 1.6) {$M_1$};
		\node [draw=none, fill=none] (S1) at (-1, 0.5) {$M_2$};
		\node [draw=none, fill=none] (S1) at (-1, 1.75) {$M_3$};
		\node [draw=none, fill=none] (S1) at (-1, 2.75) {$M_4$};
  \end{tikzpicture}
  \end{center}
\end{subfigure}
\hfill
\begin{subfigure}{0.3\textwidth}
	\begin{center}
	\begin{tikzpicture}[every node/.style={circle, fill=black, radius=3pt, inner sep=0pt, minimum size=4pt}]
		\node[fill=white] (0) at (-1,3) {\textbf{C}};
		\node[fill=WildStrawberry] (1) at (0,0) {};
		\node[fill=blue] (2) at (0, 1) {};
		\node[fill=JungleGreen] (3) at (0,2) {};
		\node[fill=Goldenrod] (4) at (1.5, 1.5) {};
		\draw (1) -- (4);
		\draw (2) -- (4);
		\draw (3) -- (4);
		\node [draw=none, fill=none] (S1) at (1.9, 1.5) {$M_1$};
		\node [draw=none, fill=none] (S1) at (-0.4, 0) {$M_2$};
		\node [draw=none, fill=none] (S1) at (-0.4, 1) {$M_3$};
		\node [draw=none, fill=none] (S1) at (-0.4, 2) {$M_4$};
	\end{tikzpicture}
	\end{center}
\end{subfigure}
\begin{subfigure}{\textwidth}
$$\textbf{D}\quad\quad \lex(\mathcal{P}) =  \binom{|\mathcal{P}|}{|M_1|}\prod_{s=1}^{N} \lex(M_s) $$
\end{subfigure}

\caption{
\textbf{Modular partition and its skeleton.}
\textbf{A:} The Hasse diagram of a poset $\cP$. 
\textbf{B:} The incomparability graph $\IG_\cP$ with a modular partition $\cM$.
\textbf{C:} The skeleton graph of $\IG_\cP/\cM$.
\textbf{D:} The formula for the number of linear extensions of $\cP$.}
\label{fig:skeleton}
\end{figure}
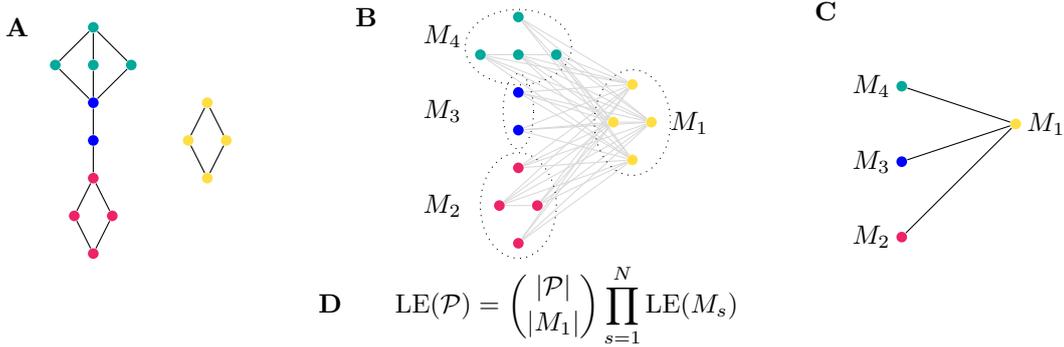

Taking the skeleton graph of a poset can be considered as the reverse of the lexicographic sum, defined as:

\begin{definition}
Let $\cS$ be a poset and $\Upsilon=\{\cP_s \}_{s\in \cS}$ a set of posets indexed by the elements of $\cS$.  The \emph{lexicographic sum of $\Upsilon$ over $\cS$}, which we denote by $\oplus_\cS \Upsilon$ is the poset $(\cP,\leq_\cP)$, where the set $\cP$ is the disjoint union of the sets in $\Upsilon$, i.e. $\cP=\sqcup_{s\in\cS}\cP_s$, and for $x\in\cP_s$, $y\in\cP_t$ we have that $x\leq y$ if either: 
$s<t$ in $\cS$ or
$s=t$ in $\cS$ and $x\leq y$ in $\cP_s$.

Two special cases of this construction are:
\begin{enumerate}
\item when $\cS$ is an antichain, this is the \emph{disjoint sum} and we denote it by $\cP_1+\cP_2+ \ldots +\cP_{|\cS|}$ and 
\item when $\cS$ is a chain, this is the \emph{ordinal sum} and we denote it by $\cP_1\oplus\cP_2\oplus\ldots \oplus\cP_{|\cS|}$.
\end{enumerate}
\end{definition}

\begin{remark}
The disjoint and ordinal sum are also sometimes called \emph{series} and \emph{parallel} operations, in particular in the setting of counting linear extensions.
\end{remark}

\begin{remark}
Note that if $\cM=\{ M_i\}_{i=1}^k$ is a modular partition of a poset $\cP$, then $\cP$ can be constructed as a lexicographic sum 
\[\cP=\oplus_{\cP/\cM} \Upsilon, \quad\quad \text{ where } \quad\quad \Upsilon=\{\cP|_{M_i} \}_{v_i\in \cP/\cM}.\]
\end{remark}

Any poset which can be built using only disjoint and ordinal sums is known as a \emph{series-parallel poset}, which are equivalent to $N$-free posets.
 Using the below formulas, the number of linear extensions of series-parallel posets can be easily computed.

\begin{lemma}\cite[Example 3.5.4]{Sta11}\label{lem:sums}
Given any two posets $\cP$ and $\cQ$ we have
$$\lex(\cP+\cQ)=\lex(\cP)\cdot\lex(\cQ)\,\,\,\,\,\,\,\,\,\text{and}\,\,\,\,\,\,\,\,\,\lex(\cP\oplus\cQ)=\binom{|\cP|+|\cQ|}{|\cP|}\cdot\lex(\cP)\cdot\lex(\cQ).$$
\end{lemma}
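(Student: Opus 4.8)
The plan is to prove both identities by exhibiting, in each case, a bijection between the linear extensions of the combined poset and a natural set of combinatorial data, and then counting that data. Throughout I write $p=|\cP|$ and $q=|\cQ|$, and recall that a linear extension of a poset on $p+q$ elements is an order-preserving bijection to $\{1,2,\ldots,p+q\}$.

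First I would treat the disjoint sum $\cP+\cQ$. Given a linear extension $\prec$ of $\cP+\cQ$, restrict it to $\cP$ and to $\cQ$; since $\prec$ preserves every relation of each summand, these restrictions are linear extensions of $\cP$ and of $\cQ$ respectively. The two summands contribute independently, so I would argue that $\prec$ is recovered from this pair of restrictions, making the restriction map a bijection between linear extensions of $\cP+\cQ$ and pairs (linear extension of $\cP$, linear extension of $\cQ$). Counting the pairs then gives $\lex(\cP+\cQ)=\lex(\cP)\cdot\lex(\cQ)$.

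Next I would treat the ordinal sum $\cP\oplus\cQ$. Here I would describe a linear extension $\prec$ as an interleaving of a linear extension of $\cP$ with a linear extension of $\cQ$: the data consist of the choice of which $p$ of the $p+q$ positions are occupied by the elements of $\cP$, together with the internal order that $\prec$ induces on $\cP$ and on $\cQ$. There are $\binom{p+q}{p}$ choices of positions, $\lex(\cP)$ internal orders for $\cP$, and $\lex(\cQ)$ for $\cQ$, and I would check that assembling these three pieces yields exactly one compatible total order and that every linear extension arises this way. This gives $\lex(\cP\oplus\cQ)=\binom{p+q}{p}\cdot\lex(\cP)\cdot\lex(\cQ)$.

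The step I expect to be the main obstacle is verifying that the assembly maps are genuine bijections, i.e.\ that recombining the combinatorial data always produces a total order compatible with the relevant partial order and that distinct data yield distinct linear extensions; this is where the precise order relations of $\cP+\cQ$ and of $\cP\oplus\cQ$ must be used carefully. Once the two bijections are established the counts are immediate, and the analogous formulas for a lexicographic sum over several summands follow by iterating these two base cases.
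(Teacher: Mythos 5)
Your two counting arguments are each individually sound, but you have attached each one to the wrong operation, so both halves of the proof fail as written. In the disjoint sum $\cP+\cQ$ (the lexicographic sum over an antichain) every element of $\cP$ is incomparable to every element of $\cQ$, so a linear extension is exactly an interleaving of a linear extension of $\cP$ with one of $\cQ$: the restriction map you propose is surjective but not injective, and its fibres have size $\binom{|\cP|+|\cQ|}{|\cP|}$. Concretely, if $\cP$ and $\cQ$ are both singletons, then $\cP+\cQ$ is a two-element antichain with two linear extensions, while your bijection claim would give one. Dually, in the ordinal sum $\cP\oplus\cQ$ every element of $\cP$ lies below every element of $\cQ$, so the set of positions occupied by $\cP$ is forced to be $\{1,\ldots,|\cP|\}$: there is exactly one admissible choice of positions, not $\binom{|\cP|+|\cQ|}{|\cP|}$, since every other interleaving violates a cross relation $x<y$ with $x\in\cP$, $y\in\cQ$. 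Again with two singletons, $\cP\oplus\cQ$ is a two-element chain with one linear extension, not two. The step you flagged as the main obstacle --- checking that recombining the data always yields an order-compatible total order --- is precisely where both of your assembly maps break.

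In fairness, the source of the mismatch is that the statement as printed in the paper has the two formulas transposed; the paper gives no proof and cites \cite[Example 3.5.4]{Sta11}, where the correct statement is $\lex(\cP+\cQ)=\binom{|\cP|+|\cQ|}{|\cP|}\cdot\lex(\cP)\cdot\lex(\cQ)$ and $\lex(\cP\oplus\cQ)=\lex(\cP)\cdot\lex(\cQ)$. This corrected version is also the one the paper actually uses: in Example~\ref{ex:star} the decomposition $\cP=M_1+\bigl(\bigoplus_{i=2}^n M_i\bigr)$ yields the single binomial $\binom{|\cP|}{|M_1|}$, which only comes out right if the binomial factor belongs to $+$ and the plain product to $\oplus$. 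Your proof strategy is therefore salvageable with no new ideas: swap the two arguments. The shuffle count (choice of $|\cP|$ positions, plus an internal linear extension of each summand) proves the disjoint-sum formula, and the restriction argument --- strengthened by the observation that the block structure is forced, so that restriction really is a bijection onto pairs of linear extensions --- proves the ordinal-sum formula.
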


\section{Counting linear extensions in three special cases: necklaces, trees and paths}\label{sec:necktreepath}
Our main result provides a closed formula for the number of linear extensions of posets which have a modular partition whose skeleton graph has certain structure.
First we consider a simple result, which demonstrates our approach.
\begin{example}\label{ex:star}
	Let $\mathcal{P}$ be a poset and $\cM=\{M_1,\ldots,M_n\}$ a modular partition of the incomparability graph $\IG_\cP$ whose skeleton is a star graph, see Figure~\ref{fig:skeleton},
	then
	$$\lex(\mathcal{P}) =  \binom{|\mathcal{P}|}{|M_1|}\prod_{s=1}^{n} \lex(M_s). $$
    Here the result follows from Lemma~\ref{lem:sums} and by noting that $\mathcal{P}=M_1+\left(\bigoplus_{i=2}^n M_i\right)$.
    Note that this is an example of a series-parallel poset.
\end{example}

In this section we present three key corollaries of our main result Theorem~\ref{thm:joined}, similar to the approach above in Example~\ref{ex:star}. Each case corresponds to the structure of the skeleton graph of a modular partition of the incomparability graph given by a path, a necklace of $3$-cliques, and a full binary tree. See Figure~\ref{fig:main_results} for an example of each.

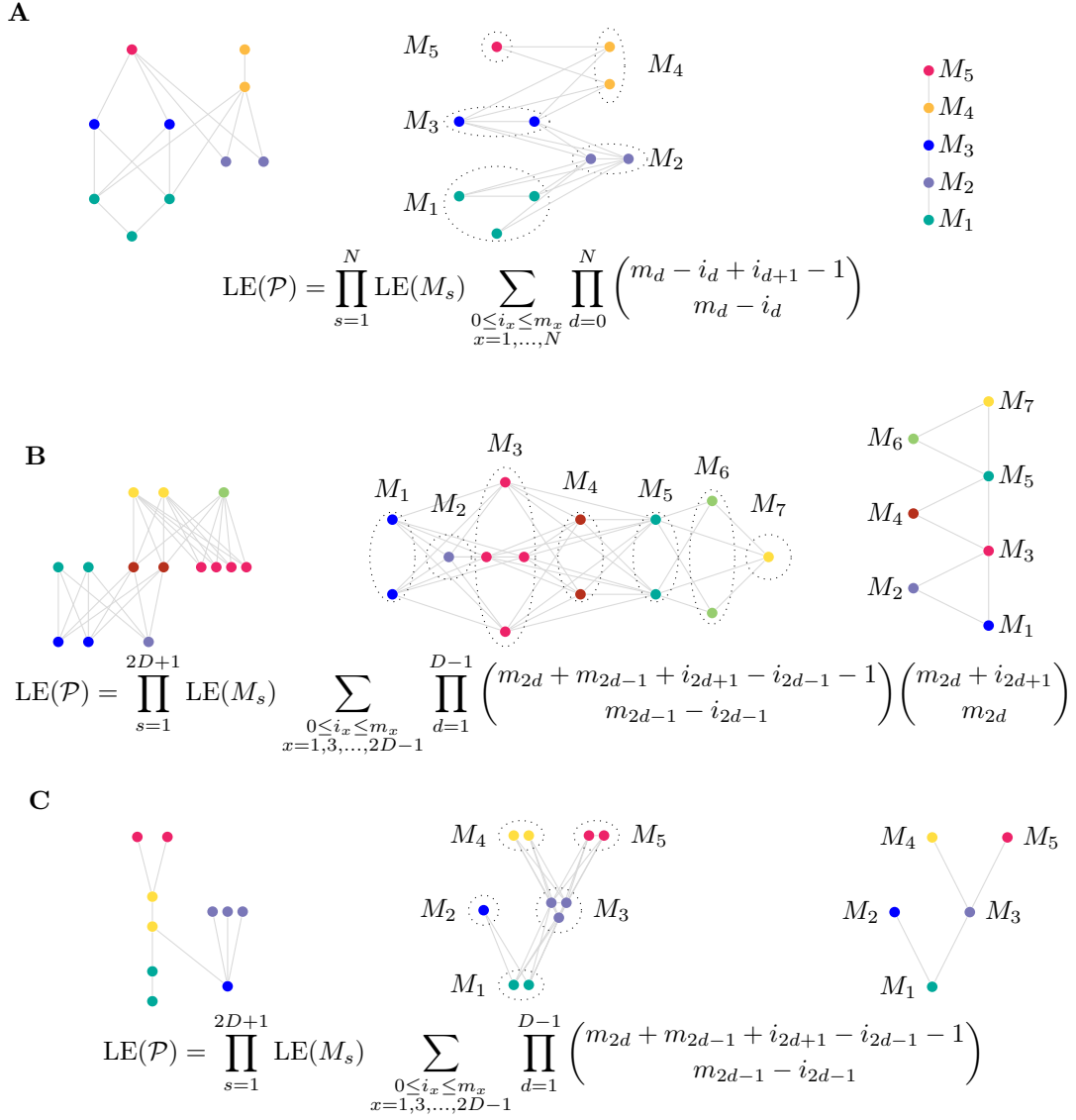
\begin{figure}
\begin{subfigure}{0.3\textwidth}
  \begin{center}
  \begin{tikzpicture}[every node/.style={circle, fill=black, radius=3pt, inner sep=0pt, minimum size=4pt}]
	\node[fill=white] (0) at (-1.5,0) {\textbf{A}};
	
	\node[fill=WildStrawberry] (1) at (0,-0.5) {};
	\node[fill=blue] (2) at (-0.5, -1.5) {};
	\node[fill=blue] (3) at (0.5, -1.5) {};
	\node[fill=JungleGreen] (4) at (-0.5, -2.5) {};
	\node[fill=JungleGreen] (5) at (0.5, -2.5) {};
	\node[fill=JungleGreen] (6) at (0, -3) {};
	\node[fill=Dandelion] (7) at (1.5, -0.5) {};
	\node[fill=Dandelion] (8) at (1.5, -1) {};
	\node[fill=Periwinkle] (9) at (1.75, -2) {};
	\node[fill=Periwinkle] (10) at (1.25, -2) {};
	\draw[color=gray!30] (4) -- (6) -- (5);
	\draw[color=gray!30] (4) -- (2) -- (5) -- (3) -- (4);
	\draw[color=gray!30] (2) -- (1) -- (3);
	\draw[color=gray!30] (9) -- (8) -- (10);
	\draw[color=gray!30] (7) -- (8);
	\draw[color=gray!30] (4) -- (8) -- (5);
	\draw[color=gray!30] (9) -- (1) -- (10);
  \end{tikzpicture}
  \end{center}
\end{subfigure}
\hfill
\begin{subfigure}{0.3\textwidth}
  \begin{center}
  \begin{tikzpicture}[every node/.style={circle, fill=black, radius=3pt, inner sep=0pt, minimum size=4pt}]

	\node[fill=WildStrawberry] (1) at (0,-0.5) {};
	\node[fill=blue] (2) at (-0.5, -1.5) {};
	\node[fill=blue] (3) at (0.5, -1.5) {};
	\node[fill=JungleGreen] (4) at (-0.5, -2.5) {};
	\node[fill=JungleGreen] (5) at (0.5, -2.5) {};
	\node[fill=JungleGreen] (6) at (0, -3) {};
	\node[fill=Dandelion] (7) at (1.5, -0.5) {};
	\node[fill=Dandelion] (8) at (1.5, -1) {};
	\node[fill=Periwinkle] (9) at (1.75, -2) {};
	\node[fill=Periwinkle] (10) at (1.25, -2) {};
	
	\draw[dotted] (0, -2.6) ellipse (0.7cm and 0.5cm);
	\draw[dotted] (1.5, -2) ellipse (0.5cm and 0.2cm);
	\draw[dotted] (0, -1.5) ellipse (0.7cm and 0.2cm);
	\draw[dotted] (1.5, -0.75) ellipse (0.2cm and 0.5cm);
	\draw[dotted] (0, -0.5) ellipse (0.2cm and 0.2cm);
	
	\node [draw=none, fill=none] (S2) at (-1, -2.6) {$M_1$};
	\node [draw=none, fill=none] (S2) at (2.25, -2) {$M_2$};
	\node [draw=none, fill=none] (S2) at (-1, -1.5) {$M_3$};
	\node [draw=none, fill=none] (S2) at (2.25, -0.75) {$M_4$};
	\node [draw=none, fill=none] (S2) at (-1, -0.5) {$M_5$};
	
	\draw[color=gray!30] (4) -- (5);	
	\draw[color=gray!30] (2) -- (3);
	\draw[color=gray!30] (9) -- (10);	
    \foreach \i in {4,5,6}{
        \foreach \j in {9,10}{
            \draw[color=gray!30] (\i) -- (\j);	
        }			
    }	
    \foreach \i in {2,3}{
        \foreach \j in {7,8,9,10}{
            \draw[color=gray!30] (\i) -- (\j);	
        }			
    }
	\draw[color=gray!30] (1) -- (7);	
	\draw[color=gray!30] (1) -- (8);
  \end{tikzpicture}
  \end{center}
\end{subfigure}
\hfill
\begin{subfigure}{0.3\textwidth}
	\begin{center}
	\begin{tikzpicture}[every node/.style={circle, fill=black, radius=3pt, inner sep=0pt, minimum size=4pt}]
    \node[circle, fill=JungleGreen, radius=3pt, inner sep=0pt, minimum size=4pt,label=right:$M_1$] (1) at (0,-2) {};
    \node[circle, fill=Periwinkle, radius=3pt, inner sep=0pt, minimum size=4pt,label=right:$M_2$] (2) at (0,-1.5) {};
    \node[circle, fill=blue, radius=3pt, inner sep=0pt, minimum size=4pt,label=right:$M_3$] (3) at (0,-1) {};
    \node[circle, fill=Dandelion, radius=3pt, inner sep=0pt, minimum size=4pt,label=right:$M_4$] (4) at (0,-.5) {};
    \node[circle, fill=WildStrawberry, radius=3pt, inner sep=0pt, minimum size=4pt,label=right:$M_5$] (5) at (0,0) {}; 
    \draw[color=gray!30] (1) -- (2) -- (3) -- (4) -- (5);
	\end{tikzpicture}
	\end{center}
\end{subfigure}
\begin{subfigure}{\textwidth}
  $$\lex(\mathcal{P})= \prod_{s=1}^N \lex(M_s) \sum_{\substack{0\le i_x\le m_{x}\\ x=1,\ldots,N}} \prod_{d=0}^{N} \binom{m_d - i_d + i_{d+1} - 1}{m_d - i_d}$$
\end{subfigure}\\[1em]
\begin{subfigure}{0.3\textwidth}
  \begin{center}
  \begin{tikzpicture}[every node/.style={circle, fill=black, radius=3pt, inner sep=0pt, minimum size=4pt}]
				\node[fill=white] (0) at (-1.5,4.5) {\textbf{B}};
				
				\node[fill = blue] (1) at (-1.2, 2) {};
				\node[fill = blue] (2) at (-0.8, 2) {};
				
				\node[fill = Periwinkle] (3) at (0,2) {};
	
				\node[fill = WildStrawberry] (4) at (1.3, 3) {};
				\node[fill = WildStrawberry] (5) at (1.1, 3) {};
				\node[fill = WildStrawberry] (6) at (0.9, 3) {};
				\node[fill = WildStrawberry] (7) at (0.7, 3) {};
				
				\node[fill = BrickRed] (8) at (-0.2, 3) {};
				\node[fill = BrickRed] (9) at (0.2, 3) {};

				\node[fill = JungleGreen] (10) at (-1.2, 3) {};
				\node[fill = JungleGreen] (11) at (-0.8, 3) {};
				
				\node[fill = YellowGreen] (12) at (1, 4) {};
				
				\node[fill = Goldenrod] (13) at (-0.2, 4) {};
				\node[fill = Goldenrod] (14) at (0.2, 4) {};
				
				\foreach \i in {1,2,3}{
					\foreach \j in {8,9,10,11}{
						\draw[color=gray!30] (\i) -- (\j);	
					}			
				}
				
				\foreach \i in {4,5,6,7,8,9}{
					\foreach \j in {12,13,14}{
						\draw[color=gray!30] (\i) -- (\j);
					}
				}
				
  \end{tikzpicture} 
  \end{center}
\end{subfigure}
\hfill
\begin{subfigure}{0.3\textwidth}
\begin{center}
  \begin{tikzpicture}[every node/.style={inner sep=0pt, minimum size=4pt, circle, fill=black}]
				\node[fill = blue] (1) at (-.5, -0.5) {};
				\node[fill = blue] (2) at (-.5, 0.5) {};
				\draw[dotted] (-.5, 0) ellipse (0.3cm and 0.6cm);
				\node [draw=none, fill=none] (I2) at (-.5, 0.9) {$M_1$};
								
				\node[fill = Periwinkle] (3) at (0.25, 0) {};
				\draw[dotted] (0.25, 0) ellipse (0.3cm and 0.3cm);
				\node [draw=none, fill=none] (S1) at (0.25, .7) {$M_2$};
				
				\node[fill = WildStrawberry] (4) at (.75, 0) {};
				\node[fill = WildStrawberry] (5) at (1,1) {};
				\node[fill = WildStrawberry] (6) at (1, -1) {};
				\node[fill = WildStrawberry] (7) at (1.25, 0) {};
				\draw[dotted] (1, 0) ellipse (0.4cm and 1.2cm);			
				\node [draw=none, fill=none] (S2) at (1, 1.5) {$M_3$};
				
				\node[fill = BrickRed] (8) at (2,-0.5) {};
				\node[fill = BrickRed] (9) at (2,0.5) {};
				\draw[dotted] (2, 0) ellipse (0.3cm and 0.6cm);	
				\node [draw=none, fill=none] (I1) at (2, 1) {$M_4$};

				\node[fill = JungleGreen] (10) at (3, 0.5) {};
				\node[fill = JungleGreen] (11) at (3, -0.5) {};
				\draw[dotted] (3, 0) ellipse (0.3cm and 0.6cm);
				\node [draw=none, fill=none] (I2) at (3, 0.9) {$M_5$};
				
				\node[fill = YellowGreen] (12) at (3.75, 0.75) {};
				\node[fill = YellowGreen] (13) at (3.75, -0.75) {};
				\draw[dotted] (3.75, 0) ellipse (0.3cm and 0.9cm);
				\node [draw=none, fill=none] (S3) at (3.75, 1.2) {$M_6$};
				
				\node[fill = Goldenrod] (14) at (4.5, 0) {};
				\draw[dotted] (4.5, 0) ellipse (0.3cm and 0.3cm);
				\node [draw=none, fill=none] (I3) at (4.5, 0.7) {$M_7$};
                \begin{pgfonlayer}{bg}
				\foreach \i in {1,2}{
					\foreach \j in {3,4,5,6,7}{
						\draw[color=gray!30] (\i) -- (\j);	
					}			
				}
				\foreach \j in {4,5,6,7}{
					\draw[color=gray!30] (3) -- (\j);
				}
				\foreach \i in {4,5,6,7}{
					\foreach \j in {8,9,10,11}{
						\draw[color=gray!30] (\i) -- (\j);
					}
				}
				\foreach \i in {8,9,12,13,14}{
					\foreach \j in {10,11}{
						\draw[color=gray!30] (\i) -- (\j);
					}
				}
				\foreach \i in {12,13}{
					\draw[color=gray!30] (\i) -- (14);
				}
				\end{pgfonlayer}
				
  \end{tikzpicture}
  \end{center}
\end{subfigure}
\hfill
\begin{subfigure}{0.3\textwidth}
  \begin{center}
  \begin{tikzpicture}[every node/.style={inner sep=0pt, minimum size=4pt, circle, fill=black}]			
					\node[fill=blue,label=right:$M_1$] (S1) at (1,-2) {};
					\node[fill=Periwinkle,label=left:$M_2$] (I1) at (0,-1.5) {};
					\node[fill=WildStrawberry,label=right:$M_3$] (S2) at (1,-1) {};
					\node[fill=BrickRed,label=left:$M_4$] (I2) at (0,-.5) {};
					\node[fill=JungleGreen,label=right:$M_5$] (S3) at (1,0) {};
					\node[fill=YellowGreen,label=left:$M_6$] (I3) at (0,0.5) {};
					\node[fill=Goldenrod,label=right:$M_7$] (S4) at (1,1) {};
					
					\draw[color=gray!30] (S1) -- (I1) -- (S2) -- (S1);
					\draw[color=gray!30] (S2) -- (I2) -- (S3) -- (S2);
					\draw[color=gray!30] (S3) -- (I3) -- (S4) -- (S3);		
					
					\node[fill=none,draw=none] (blank) at (1,-1) {};		
  \end{tikzpicture}
  \end{center}
\end{subfigure}
\begin{subfigure}{\textwidth}
$$\lex(\cP) = 
	\prod_{s=1}^{2D+1} \lex(M_s)
	\sum_{\substack{0\le i_x\le m_x\\x=1,3,\ldots, 2D-1}}
		\prod_{d=1}^{D-1} 
			\binom{m_{2d}+m_{2d-1}+i_{2d+1}-i_{2d-1}-1}{m_{2d-1}-i_{2d-1}}\binom{m_{2d}+i_{2d+1}}{m_{2d}} $$
\end{subfigure}\\[1em]
\begin{subfigure}{0.3\textwidth}
  \begin{center}
  \begin{tikzpicture}[every node/.style={circle, fill=black, radius=3pt, inner sep=0pt, minimum size=4pt}]
	\node[fill=white] (0) at (-1.5,0.5) {\textbf{C}};

	\node[fill=JungleGreen] (1) at (0,-2.2) {};
	\node[fill=JungleGreen] (2) at (0, -1.8) {};
	\node[fill=blue] (3) at (1, -2) {};
	\node[fill=Goldenrod] (4) at (0, -1.2) {};
	\node[fill=Goldenrod] (5) at (0, -0.8) {};
	\node[fill=Periwinkle] (6) at (0.8, -1) {};
	\node[fill=Periwinkle] (7) at (1, -1) {};
	\node[fill=Periwinkle] (8) at (1.2, -1) {};
	\node[fill=WildStrawberry] (9) at (-0.2, 0) {};
	\node[fill=WildStrawberry] (10) at (0.2, 0) {};
	\draw[color=gray!30] (1) -- (2) -- (4) -- (5) -- (9);
	\draw[color=gray!30] (5) -- (10);
	\draw[color=gray!30] (6) -- (3) -- (4);
	\draw[color=gray!30] (7) -- (3) -- (8);
  \end{tikzpicture}
  \end{center}
\end{subfigure}
\begin{subfigure}{0.3\textwidth}
  \begin{center}
  \begin{tikzpicture}[every node/.style={circle, fill=black, radius=3pt, inner sep=0pt, minimum size=4pt}]
	\node[fill=JungleGreen] (1) at (0.4,-2) {};
	\node[fill=JungleGreen] (2) at (0.6, -2) {};
	\node[fill=blue] (3) at (0, -1) {};
	\node[fill=Goldenrod] (4) at (0.4, 0) {};
	\node[fill=Goldenrod] (5) at (0.6, 0) {};
	\node[fill=Periwinkle] (6) at (1, -1.1) {};
	\node[fill=Periwinkle] (7) at (1.1, -0.9) {};
	\node[fill=Periwinkle] (8) at (0.9, -0.9) {};
	\node[fill=WildStrawberry] (9) at (1.4, 0) {};
	\node[fill=WildStrawberry] (10) at (1.6, 0) {};
	\begin{pgfonlayer}{bg}
	\draw[color=gray!30] (6) -- (7) -- (8) -- (6);
	\draw[color=gray!30] (9) -- (10);
	\draw[color=gray!30] (1) -- (3) -- (2);
	\draw[color=gray!30] (6) -- (1) -- (7);
	\draw[color=gray!30] (6) -- (2) -- (7);
	\draw[color=gray!30] (1) -- (8) -- (2);
	\draw[color=gray!30] (6) -- (9) -- (7);
	\draw[color=gray!30] (6) -- (10) -- (7);
	\draw[color=gray!30] (10) -- (8) -- (9);
	\draw[color=gray!30] (6) -- (4) -- (7);
	\draw[color=gray!30] (6) -- (5) -- (7);
	\draw[color=gray!30] (4) -- (8) -- (5);
	\end{pgfonlayer}
	\draw[dotted] (0.5, -2) ellipse (0.3cm and 0.2cm);
	\draw[dotted] (0, -1) ellipse (0.2cm and 0.2cm);
	\draw[dotted] (1, -1) ellipse (0.3cm and 0.3cm);
	\draw[dotted] (0.5, 0) ellipse (0.3cm and 0.2cm);
	\draw[dotted] (1.5, 0) ellipse (0.3cm and 0.2cm);
	\node [draw=none, fill=none] (S1) at (-0.2, -2) {$M_1$};
	\node [draw=none, fill=none] (S1) at (-0.6, -1) {$M_2$};
	\node [draw=none, fill=none] (S1) at (1.7, -1) {$M_3$};
	\node [draw=none, fill=none] (S1) at (-0.2, 0) {$M_4$};
	\node [draw=none, fill=none] (S1) at (2.2, 0) {$M_5$};
  \end{tikzpicture}
  \end{center}
\end{subfigure}
\begin{subfigure}{0.3\textwidth}
  \begin{center}
  \begin{tikzpicture}
    \node[circle, fill=JungleGreen, radius=3pt, inner sep=0pt, minimum size=4pt,label=left:$M_1$] (2) at (0,-2) {};
    \node[circle, fill=blue, radius=3pt, inner sep=0pt, minimum size=4pt,label=left:$M_2$] (1) at (-.5,-1) {};
    \node[circle, fill=Periwinkle, radius=3pt, inner sep=0pt, minimum size=4pt,label=right:$M_3$] (4) at (.5, -1) {};
    \node[circle, fill=Goldenrod, radius=3pt, inner sep=0pt, minimum size=4pt,label=left:$M_4$] (3) at (0, 0) {};
    \node[circle, fill=WildStrawberry, radius=3pt, inner sep=0pt, minimum size=4pt,label=right:$M_5$] (5) at (1, 0) {};
    \draw[color=gray!30] (1) -- (2) -- (4) -- (5);
    \draw[color=gray!30] (3) -- (4);
  \end{tikzpicture}
  \end{center}
\end{subfigure}
\begin{subfigure}{\textwidth}
	$$ \lex(\mathcal{P})=\prod_{s=1}^{2D+1}
 \lex(M_s) \sum_{\substack{0\le i_x\le m_x\\x=1,3,\ldots, 2D-1}}
	\prod_{d=1}^{D-1} \binom{m_{2d}+m_{2d-1}+i_{2d+1}-i_{2d-1}-1}{m_{2d-1}-i_{2d-1}} $$
\end{subfigure}
\caption{
\textbf{Examples of the 
main results.} \textbf{A:} The path, \textbf{B:} the necklace and \textbf{C:} the binary tree. For each: \emph{Left}: the poset $\cP$, \emph{Middle}: the incomparability graph $\IG_\cP$ with a modular partition, \emph{Right}: the skeleton graph of the modular partition, \emph{Below}: the formula for the number of linear extensions.}
\label{fig:main_results}
\end{figure}

We begin with the case where the skeleton is a path. 
 \begin{theorem}\label{thm:path}
Let $\cP$ be a poset and $\cM=\{M_1,M_2,\ldots,M_{N}\}$ be a modular partition of $\IG_\cP$, where 
the skeleton graph $\IG_\cP/\cM$ is a path such that $M_i$ and $M_{i+1}$ are connected, $\forall i=1,\ldots,N-1$, see Figure~\ref{fig:main_results}A. Then
  	$$\lex(\mathcal{P})= \prod_{s=1}^N \lex(M_s) \sum_{\substack{0\le i_x\le m_{x}\\ x=1,\ldots,N-1}} \prod_{d=0}^{N} \binom{m_d - i_d + i_{d+1} - 1}{m_d - i_d}$$
  	where $m_k=|M_k|$ and $i_{N}=m_{N}$.
 \end{theorem}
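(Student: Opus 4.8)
The plan is to prove the statement in two steps: first reduce the count to a pure interleaving problem about chains, and then evaluate that interleaving count by an induction that peels off one module at a time. For the reduction, I would use the fact that $\lex$ is a comparability invariant, i.e. it depends only on $\CG_\cP=\overline{\IG_\cP}$. This lets me replace $\cP$ by the poset $\cP^{\ast}$ obtained by orienting every comparable pair between $M_s$ and $M_t$ with $s<t$ (necessarily $|s-t|\ge 2$) as $M_s<M_t$, while keeping the internal order of each module. Under this choice $\cM$ is genuinely a modular partition of $\cP^{\ast}$, and $\cP^{\ast}=\oplus_{\cS_N}\Upsilon$ is the lexicographic sum over the quotient poset $\cS_N$, which is the complement of the path $P_N$: the set $\{1,\dots,N\}$ with $s<t$ exactly when $t\ge s+2$ (this is readily checked to be a transitive order with the correct comparability graph). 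By the factorization of \cite{Hab87} I then obtain $\lex(\cP)=\big(\prod_{s=1}^{N}\lex(M_s)\big)\cdot\lex(\cP')$, where $\cP'$ replaces vertex $s$ of $\cS_N$ by a chain $C_s$ of size $m_s$, so it remains only to show that $\lex(\cP')$ equals the stated double sum.

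For the interleaving count I would argue by induction on $N$, peeling off the bottom chain $C_1$. For a linear extension of $\cP'$ define the interface statistic $i_d$ to be the number of elements of $C_d$ preceding the first element of $C_{d+1}\cup\cdots\cup C_N$, with $i_N=m_N$ by the empty-barrier convention; clearly $0\le i_d\le m_d$. Since $C_1$ lies below every $C_e$ with $e\ge 3$, its elements must all precede the first element of $C_3\cup\cdots\cup C_N$ and can interleave only with the $i_2$ elements of $C_2$ lying before that barrier. A short case analysis (the boundary case $i_2=0$ forcing $i_1=m_1$) shows that, given a linear extension of $C_2,\dots,C_N$ with first interface $i_2$, the number of ways to reinsert the chain $C_1$ realising a prescribed interface $i_1$ is exactly $\binom{m_1-i_1+i_2-1}{m_1-i_1}$. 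Hence the number of linear extensions with a fixed interface tuple factorises, and the refined induction hypothesis
$$\#\{\text{linear extensions of }\cP'\text{ with interfaces }(i_1,\dots,i_{N-1})\}=\prod_{d=1}^{N-1}\binom{m_d-i_d+i_{d+1}-1}{m_d-i_d}$$
is reproduced for $N$ from the case $N-1$. Summing over all interface tuples and absorbing the trivial boundary factors $d=0,N$ (each equal to $1$ under the conventions $m_0=i_0=0$) yields the theorem; the hockey-stick identity appears only if one prefers the unrefined check $\sum_{i_1}\binom{m_1-i_1+i_2-1}{m_1-i_1}=\binom{m_1+i_2}{m_1}$.

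I expect the main obstacle to be the reduction step rather than the counting. Justifying that $\lex$ may be evaluated on the reoriented poset $\cP^{\ast}$ is precisely the point at which a modular partition of $\IG_\cP$ need not be a modular partition of $\cP$, the phenomenon the paper isolates in Section~\ref{sec:notposet}; making it rigorous requires the comparability invariance of $\lex$, or equivalently a direct verification that the interleaving count is insensitive to the orientation of the non-adjacent cross-pairs. The remaining delicate point is purely bookkeeping: confirming that the reinsertion count equals the claimed binomial uniformly across all boundary cases ($i_2=0$, $i_d=m_d$, and the empty chains $m_0,m_{N+1}$), so that the single product formula holds without a separate case split.
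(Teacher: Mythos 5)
Your proposal is correct and takes essentially the same route as the paper: the reduction via comparability invariance of $\lex$ and reorientation of the cross-module edges (Lemmas~\ref{lem:sameIG} and~\ref{lem:reorient2}), the factorization over modules (Lemma~\ref{lem:JSgen}), and your ``interface statistic'' is exactly the paper's pivot, with the same local binomial count $\binom{m_d-i_d+i_{d+1}-1}{m_d-i_d}$ and the same boundary conventions. The only cosmetic difference is that you peel off one chain at a time by induction on $N$, whereas the paper fixes all pivots simultaneously and multiplies the independent local counts in the proof of Theorem~\ref{thm:joined}, from which Theorem~\ref{thm:path} is obtained by taking all even-indexed modules empty.
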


We continue with the case where the skeleton is a necklace of cliques. 
First we define a necklace, following the terminology of \cite{belardo10}.
\begin{definition}
An  \emph{$n$-clique} is a complete undirected graph and a \emph{necklace} is an undirected graph obtained by gluing a list of cliques $(c_1, c_2, ..., c_n)$, where a single vertex of $c_i$ is glued to a single vertex of $c_{i+1}$ for $1\leq i < n$, with $c_{i-1}$ and $c_{i+1}$ glued to different vertices of $c_i$.  A \emph{$3$-necklace} is a necklace where all cliques are $3$-cliques.
\end{definition}

\begin{theorem}\label{thm:necklace}
Let $\cP$ be a poset and $\cM=\{M_1,M_2,\ldots,M_{2D+1}\}$ be a modular partition of $\IG_\cP$, where 
 the skeleton graph $\IG_\cP/\cM$ is a necklace of 3-cliques, see Figure~\ref{fig:main_results}B. Number the modules such that the cliques join on the odd indexed modules.
Then
\[\lex(\cP) = 
	\prod_{s=1}^{2D+1} \lex(M_s)
	\sum_{\substack{0\le i_x\le m_x\\x=1,3,\ldots, 2D-1}}
		\prod_{d=1}^{D-1} 
			\binom{m_{2d}+m_{2d-1}+i_{2d+1}-i_{2d-1}-1}{m_{2d-1}-i_{2d-1}}\binom{m_{2d}+i_{2d+1}}{m_{2d}} \]
where $m_k=|M_k|$ and $i_{2D+1}=m_{2D+1}$.
\end{theorem}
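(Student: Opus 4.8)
The plan is to reduce the statement to a pure interleaving count on the skeleton and then evaluate that count by an induction over the triangles of the necklace.

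First I would fix the orientation. The hypothesis only supplies a modular partition of $\IG_\cP$, so my first task is to verify that $\cM$ is in fact a modular partition of $\cP$ and to identify the order induced on the skeleton. Two modules $M_i,M_j$ are comparable in $\cP$ exactly when $v_i,v_j$ are non-adjacent in the necklace, i.e.\ when $\{i,j\}$ is not contained in a common $3$-clique, and since $\cP$ is a genuine poset all such comparable pairs must be coherently oriented. I would argue this forces the monotone orientation $M_i<M_j\iff i<j$ with $i,j$ in no common triangle: the only incomparable pairs lie inside a single triangle, and for such a pair there is no module strictly between them that is comparable to both, so transitivity never forces a non-monotone choice and the monotone relation is itself transitive. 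Consequently $\cP=\oplus_{\cS}\Upsilon$ with $\cS$ the comparability poset of the skeleton in this order and $\Upsilon=\{M_s\}$. Combining the series/parallel identities of Lemma~\ref{lem:sums} with the lexicographic-sum description then gives
\[ \lex(\cP)=\Big(\prod_{s=1}^{2D+1}\lex(M_s)\Big)\cdot\lex(\widehat{\cS}), \]
where $\widehat{\cS}$ replaces each vertex $v_s$ by a chain of length $m_s=|M_s|$. Thus it suffices to count the shuffles of the module-chains compatible with $\cS$, and the factor $\prod_s\lex(M_s)$ is exactly the prefactor of the claimed formula.

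Next I would set up a transfer argument by induction on the number of triangles, peeling off the top triangle $\{M_{2d-1},M_{2d},M_{2d+1}\}$. The structural fact I would exploit is that in $\cS$ the middle module $M_{2d}$ is comparable to every module outside its triangle, whereas each junction $M_{2d\pm1}$ is incomparable precisely to the four modules of its two adjacent triangles. Reading a linear extension from bottom to top, each junction $M_{2d-1}$ therefore splits into a ``lower'' and an ``upper'' part, and I would let $i_{2d-1}$ record the size of the lower part, the boundary value $i_{2D+1}=m_{2D+1}$ expressing that the top junction is maximal. Because $\cS$ links consecutive junctions only through the triangle lying between them, the count factorises as a transfer from junction $2d-1$ to junction $2d+1$, which produces the product-over-$d$ structure and the outer sum over the junction split parameters $i_1,i_3,\ldots,i_{2D-1}$.

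The heart of the proof, and the step I expect to be the main obstacle, is the weight contributed by a single triangle, $\binom{m_{2d}+m_{2d-1}+i_{2d+1}-i_{2d-1}-1}{m_{2d-1}-i_{2d-1}}\binom{m_{2d}+i_{2d+1}}{m_{2d}}$. Within the band occupied by triangle $d$ one must interleave three mutually incomparable groups: the upper part of $M_{2d-1}$, all of $M_{2d}$, and the retained part $i_{2d+1}$ of $M_{2d+1}$. The second binomial is the free interleaving of the chain $M_{2d}$ with those $i_{2d+1}$ junction elements. The delicate point is the first binomial: it is a stars-and-bars factor rather than a plain multinomial, and the tell-tale $-1$ indicates that the upper part of $M_{2d-1}$ is \emph{not} interleaved freely but is pinned against the lower boundary of the band by the transitive relations coming up from triangle $d-1$. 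Getting this pinning exactly right, and checking that the recursion closes, is where the care lies; concretely I would collapse the inner sum over each $i_{2d-1}$ using the hockey-stick identity $\sum_{j=0}^{J}\binom{a+j-1}{j}=\binom{a+J}{J}$, which is precisely the identity that makes the small cases telescope to the expected value.

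Finally I would assemble the pieces: the reduction supplies the prefactor $\prod_s\lex(M_s)$, the triangle-by-triangle transfer supplies the product of binomials, and summing over all admissible junction splits $i_1,i_3,\ldots,i_{2D-1}$ (with $i_{2D+1}=m_{2D+1}$ fixed) supplies the double sum. Since every step is an explicit interleaving choice, the argument is constructive and in principle lists all linear extensions, matching the paper's stated aim.
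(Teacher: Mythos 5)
Your overall strategy is the same as the paper's: factor out $\prod_s\lex(M_s)$ via the modular partition (the paper's Lemma~\ref{lem:JSgen}), reduce to counting shuffles of the chain-replaced skeleton, split each junction module $M_{2d-1}$ into a lower and an upper part of sizes $i_{2d-1}$ and $m_{2d-1}-i_{2d-1}$, and weight each triangle's band by a stars-and-bars factor times a free binomial. The paper obtains the theorem as the special case of Theorem~\ref{thm:joined} in which every $\Phi_{2d}$ is $\binom{m_{2d}+i_{2d+1}}{m_{2d}}$, using the pivot of Definition~\ref{def:pivot} and a direct sum over pivot tuples rather than your triangle-peeling induction; that difference is cosmetic.

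There is, however, a genuine gap at exactly the point you flag as the main obstacle: the $-1$ in $\binom{m_{2d}+m_{2d-1}+i_{2d+1}-i_{2d-1}-1}{m_{2d-1}-i_{2d-1}}$. Your explanation --- that the upper part of $M_{2d-1}$ is ``pinned against the lower boundary of the band by the transitive relations coming up from triangle $d-1$'' --- cannot be right: $M_{2d-1}$ is incomparable to both other modules of triangle $d-1$, so no order relation from below constrains where its upper part sits inside band $d$ beyond the internal chain order of $M_{2d-1}$ itself, and that alone would permit free interleaving (no $-1$). The true origin of the $-1$ is a normalisation needed to avoid overcounting: a linear extension does not come with a preferred split of $M_{2d-1}$, and if the upper part were interleaved freely, the same extension would be produced by several values of $i_{2d-1}$ (slide the cut point of the chain $M_{2d-1}$ past consecutive elements not separated by any element of a later module). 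The paper fixes this by taking the pivot $e_{2d-1}$ to be the \emph{largest} element of $M_{2d-1}$ lying below all of $M_s$ for $s>2d-1$; maximality forces the first element of $M_{2d-1}^{last}$ to have some element of $M_{2d}\cup M_{2d+1}^{first}$ below it, i.e.\ forbids it from occupying the first slot of the band, which is precisely the $-1$. Without stating and using such a canonical choice your transfer recursion does not close, and the hockey-stick check on small cases cannot substitute for it. A smaller issue: the monotone orientation $M_i<M_j$ for $i<j$ outside a common triangle need not be the orientation of the given poset $\cP$; what you actually need is that it is \emph{a} transitive orientation, after which Lemma~\ref{lem:reorient2} and Lemma~\ref{lem:sameIG} let you replace $\cP$ by a poset with the same incomparability graph and hence the same number of linear extensions.
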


\begin{remark}
Note that if an incomparability graph admits a modular partition whose skeleton is a necklace with each clique of size at least $3$,
then it also admits a modular partition whose skeleton is a $3$-necklace. Indeed,  given a necklace of cliques $(c_1,\ldots,c_k)$, for any clique $c_i$ we can group $|c_i|-2$ modules together, corresponding to the  ``non-gluing vertices’’ on the necklace.  This gives a coarser partition whose skeleton graph is a $3$-necklace. 
Counting the modular partitions of these coarser modules in terms of the original modules is straightforward using Lemma~\ref{lem:JSgen}, since they correspond to independent sets in the comparability graph.
Therefore, the restriction to cliques of size three is not substantive. 
\end{remark}

Finally, we consider the case where the skeleton is a full binary tree. 

\begin{definition}
A \emph{full binary tree}, is a rooted tree where every node has either $0$ or $2$ children.  The path distance from the root to a node is its \emph{depth} and the maximal depth across all nodes is the \emph{total depth} of the tree.
\end{definition}

\begin{theorem}\label{thm:tree}
Let $\cP$ be a poset and $\cM=\{M_1,M_2,\ldots,M_{2D+1}\}$ be a modular partition of $\IG_\cP$, where 
the skeleton graph $\IG_\cP/\cM$ is a full binary tree of total depth $D$, see Figure~\ref{fig:main_results}C. Number the modules such that the even index modules are leaves, the last module $M_{2D+1}$ is also a leaf, and 
the modules $M_{2d}$ and~$M_{2d+1}$ are at depth $d$ in the tree.
Then
	$$ \lex(\mathcal{P})=\prod_{s=1}^{2D+1}
 \lex(M_s) \sum_{\substack{0\le i_x\le m_x\\x=1,3,\ldots, 2D-1}}
	\prod_{d=1}^{D-1} \binom{m_{2d}+m_{2d-1}+i_{2d+1}-i_{2d-1}-1}{m_{2d-1}-i_{2d-1}} $$
	where $m_k=|M_k|$ and $i_{2D+1}=m_{2D+1}$.
\end{theorem}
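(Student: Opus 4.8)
The plan is to combine two ingredients: a modular factorization that strips out the internal orderings of the modules, and a layered interleaving count along the spine of the caterpillar. Morally this is the specialization of the general recursion of Theorem~\ref{thm:joined} to a full binary tree, but I will describe a self-contained route. For the first ingredient, Habib's theorem \cite{Hab87} gives, whenever $\cM$ is a modular partition of the \emph{poset} $\cP$, the identity $\lex(\cP)=\lex(\widehat{\cP})\prod_{s=1}^{2D+1}\lex(M_s)$, where $\widehat{\cP}$ is obtained by replacing each module by a chain of the same size. This already produces the factor $\prod_s\lex(M_s)$, so it remains to prove that $\lex(\widehat{\cP})$ equals the displayed sum of products of binomials. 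The hypothesis only furnishes a modular partition of $\IG_\cP$, not of $\cP$, so $\widehat{\cP}$ is a priori ill defined; I would resolve this using that the number of linear extensions is an invariant of the comparability graph (equivalently of $\IG_\cP$), which is precisely the phenomenon analysed in Section~\ref{sec:notposet}. All transitive orientations of $\overline{\IG_\cP}$ compatible with the skeleton yield the same count, so we may fix the canonical orientation in which each module is autonomous, as in Figure~\ref{fig:main_results}C, and compute there.

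For the second ingredient, observe that in $\widehat{\cP}$ the odd-indexed modules $M_1,M_3,\dots,M_{2D+1}$ form the spine of the caterpillar and each even-indexed $M_{2d}$ is the leaf attached to the internal spine node $M_{2d-1}$; the comparabilities between modules are exactly the non-edges of the tree, so every module is incomparable precisely to its tree-neighbours. I would compute $\lex(\widehat{\cP})$ by induction on the depth $D$, peeling off the root $M_1$ together with its leaf child $M_2$ and reducing to the subtree rooted at $M_3$, which is a caterpillar of depth $D-1$. The elements of $M_1$ must be inserted below the deeper part of the subtree but may float freely among the elements of its incomparable child $M_3$ and of $M_2$; to let the induction close I would carry a frontier variable $i_{2d-1}$ recording how many elements of the spine module $M_{2d-1}$ are pushed below the interface with the shallower levels, leaving $m_{2d-1}-i_{2d-1}$ elements free to be interleaved at level $d$, while $i_{2d+1}$ records how many elements of the next spine module reach up into that same region.

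The per-level contribution is then a single constrained interleaving: at level $d$ one shuffles the $m_{2d-1}-i_{2d-1}$ free elements of $M_{2d-1}$ with the $m_{2d}$ elements of the leaf $M_{2d}$ and the $i_{2d+1}$ boundary elements contributed by the deeper spine module, subject to one fixed endpoint. By stars and bars this count is $\binom{(m_{2d-1}-i_{2d-1})+m_{2d}+i_{2d+1}-1}{m_{2d-1}-i_{2d-1}}$, which is exactly the displayed binomial, and the boundary condition $i_{2D+1}=m_{2D+1}$ records that nothing lies deeper than the final leaf. Unrolling the recursion multiplies these factors over the spine and sums over the frontier variables $i_1,i_3,\dots,i_{2D-1}$, producing the stated formula; the same scheme, with an extra shuffle at each clique junction, is what distinguishes Theorem~\ref{thm:necklace} (an additional factor $\binom{m_{2d}+i_{2d+1}}{m_{2d}}$) from the present statement.

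The hard part is proving that the interleavings at distinct spine levels are genuinely independent, so that the transfer composes and the count factors as a product over $d$. Concretely, an element of a spine module floats upward only until it meets a strictly deeper module, and one must verify bijectively that a linear extension of $\widehat{\cP}$ is reconstructed uniquely from the frontier data $(i_{2d-1})_d$ together with the level-by-level shuffles, with no interaction between non-adjacent levels. Establishing this clean separation—together with checking the base case $D=1$ and the orientation-invariance that legitimizes passing to $\widehat{\cP}$—is where the real work lies; once the decomposition is set up, the binomial identities are routine.
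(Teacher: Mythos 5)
Your approach is essentially the paper's. The paper derives this theorem as the special case of Theorem~\ref{thm:joined} in which every $\Phi_{2d}=1$ (all even modules non-empty and comparable to the next spine module), and the proof of Theorem~\ref{thm:joined} uses exactly the ingredients you name: Lemma~\ref{lem:JSgen} to extract $\prod_s\lex(M_s)$, Lemmas~\ref{lem:sameIG} and~\ref{lem:reorient2} to replace $(\cP,\le)$ by a poset with the same incomparability graph for which $\cM$ is a genuine poset partition and the module numbering is a linear extension of the skeleton, and then a decomposition of $[n]$ governed by your frontier variables, which are the paper's pivots $i_{2d-1}=|M_{2d-1}^{first}|$ of Definition~\ref{def:pivot}, with the same per-level binomial. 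The only organizational difference is that you phrase the spine traversal as an induction on depth while the paper fixes all pivots simultaneously and multiplies local counts; these are the same argument.

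That said, the step you explicitly defer as ``where the real work lies'' --- that a linear extension is reconstructed uniquely and independently from the frontier data together with level-by-level shuffles --- is precisely the substance of the paper's proof, and your proposal does not supply it. The paper's mechanism is the maximality built into the pivot: since $M_{2d-1}^{first}$ precedes every $M_k$ with $k>2d-1$, the pivots cut $[n]$ into consecutive intervals $I_{2d}$ with $\ell^{-1}(I_{2d})=M_{2d-1}^{last}\cup M_{2d}\cup M_{2d+1}^{first}$, which is what makes the local counts compose as a product with no interaction between non-adjacent levels; and the same maximality forbids an element of $M_{2d-1}^{last}$ from occupying the first slot of $I_{2d}$, which is the precise source of your ``one fixed endpoint'' and of the $-1$ in the binomial. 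Two smaller points you gesture at but do not verify: that the module ordering gives a transitive orientation of $\CG_\cP/\cM$ (needed to invoke Lemma~\ref{lem:reorient2}; the paper checks this directly from the edge conditions), and that every tuple $(i_1,i_3,\ldots,i_{2D-1})$ in the stated range is either realizable or contributes $0$ (in the tree case all $m_{2d}>0$, so the degenerate configuration the paper must exclude does not arise). So: right approach, right formula, but as written this is a sketch that stops short of the bijection the theorem requires.
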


\begin{remark}
In \cite{Hab87} the \emph{decomposition diameter} of $\cP$ is defined as the size of the largest module of the prime modular partition, this is also known as \emph{modular width}.
The above are all posets with bounded decomposition diameter, thus a polynomial time algorithm exists for computing the number of linear extensions \cite[Proposition 3.4]{Hab87}. 
 We extend the work of \cite{Hab87} by providing closed formulas for the number of linear extensions for posets which are the lexicographic sum $\oplus_\cS \Upsilon$, where~$\Upsilon$
 is a path, 3-necklace or tree; or, as we see in the next section, when~$\Upsilon$ is locally a path, 3-necklace or tree.
\end{remark}

\section{Counting linear extensions via modular partitions of the incomparability graph}
\subsection{Main result and proofs}

In this section we prove our main result based on the structure of the skeleton of the incomparability graph. Intuitively, this structure may be viewed as being locally either a necklace of $3$-cliques, a tree, or a path. As a consequence, all results of Section~\ref{sec:necktreepath} follow from this result.  We achieve this by reducing the problem to counting the number of linear extension of the individual modules, and how they interact.  The following definition and lemma introduce the elements to make this precise.

\begin{definition}
Let $\cM=\{ M_i\}_{i=1}^k$ be a modular partition of a poset $\cP$.
Define $\mathcal{P}_\cM$ as the poset obtained by replacing every module of $\cP$ with a chain.  That is 
\[\cP_\cM := \oplus_{\cP/\cM} \Upsilon 
\quad\quad \text{where} \quad\quad 
\Upsilon=\{\mathbf{m_i}\}_{v_i\in \cP/\cM}
\quad\quad \text{and} \quad\quad m_i=|M_i|.\]

An \emph{$\cM$-linear extension of $\cP$} is an order-preserving bijection $\ell_\cM:\cP\to\cP_\cM$, which for all $i$ sends the module $M_i$ to the totally ordered module $\mathbf{m_i}$. 
Note in particular that if $\cM$ is the trivial partition consisting of all of $\cP$ then this restricts to the definition of a linear extension of $\cP$.
\end{definition}

The following lemma allows us to decompose the computation of the number of linear extensions using the modular partition. 
 This is analogous to a formula in the proof of \cite[Proposition 3.4]{Hab87}.
\begin{lemma}\label{lem:JSgen}
If  $\cM=\{ M_i\}_{i=1}^k$ is a modular partition of a poset $\cP$, then
  \[\lex(\cP)=\lex(\cP_\cM)\cdot\prod_{i=1}^{k}\lex(M_i).\]
\end{lemma}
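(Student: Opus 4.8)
The plan is to prove the identity by constructing an explicit bijection between the set of linear extensions of $\cP$ and the product set consisting of one linear extension of $\cP_\cM$ together with a linear extension of each module $M_i$. Since $\lex$ counts these objects, such a bijection immediately yields $\lex(\cP)=\lex(\cP_\cM)\cdot\prod_{i=1}^{k}\lex(M_i)$. The intuition is that a total order of $\cP$ is the same data as (a) the way the modules are interleaved, which is exactly a linear extension of the ``collapsed'' poset $\cP_\cM$, together with (b) an independent choice of internal order within each module.

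First I would record the forward map. Given a linear extension $\ell\colon\cP\to[n]$, for each module let $R_i=\ell(M_i)\subseteq[n]$ be the set of ranks it occupies, and let $\ell_i$ be the internal order induced on $M_i$ by $\ell$; since $\ell$ extends $\le_\cP$ it extends $\le_{\cP|_{M_i}}$, so $\ell_i$ is a linear extension of $M_i$. Define $\bar\ell$ on $\cP_\cM$ by sending the $j$-th element of the chain $\mathbf{m_i}$ to the $j$-th smallest element of $R_i$. The key step is to check that $\bar\ell$ is a linear extension of $\cP_\cM$: within a single chain this holds by construction, and between two distinct modules $M_i,M_j$ one must show that whenever $\mathbf{m_i}$ lies below $\mathbf{m_j}$ in $\cP_\cM$, every rank in $R_i$ is below every rank in $R_j$.

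This last point is where the modular hypothesis does the work, and I expect it to be the main obstacle. One must first establish the uniformity lemma: if $x<y$ in $\cP$ with $x\in M_i$ and $y\in M_j$ for $i\ne j$, then every element of $M_i$ lies below every element of $M_j$. This follows from the definition of a module in $G_\cP$ — the out-neighbours of the vertices of $M_i$ outside $M_i$ coincide, and likewise for the in-neighbours of $M_j$ — so the single relation $x<y$ propagates to all of $M_i\times M_j$. Consequently comparable modules occupy fully separated blocks of ranks under any linear extension, while incomparable modules may interleave freely and require no constraint; this is precisely what makes $\bar\ell$ well defined and order preserving.

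Finally I would construct the inverse and conclude. Given $\bar\ell\in\lex(\cP_\cM)$ and a linear extension $\ell_i$ of each $M_i$, set $R_i=\bar\ell(\mathbf{m_i})$ and place the elements of $M_i$ into the ranks of $R_i$ in their $\ell_i$-order; the same uniformity lemma (now applied to $\bar\ell$, which already separates comparable modules) shows the resulting $\ell$ respects $\le_\cP$. The two constructions are visibly mutually inverse, since $\bar\ell$ determines the rank-blocks $R_i$ and $\ell_i$ determines the filling within each block, so the bijection follows and the count multiplies as claimed. I would also remark that a naive induction via Lemma~\ref{lem:sums} does not suffice here, because the skeleton $\cP/\cM$ need not be series-parallel; the bijection treats an arbitrary skeleton uniformly.
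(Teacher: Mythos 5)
Your proposal is correct and takes essentially the same route as the paper: the paper's (much terser) proof decomposes a linear extension of $\cP$ into an $\cM$-linear extension (your per-module internal orders $\ell_i$) followed by a linear extension of $\cP_\cM$ (your interleaving map $\bar\ell$), with the two factors counting the independent choices at each stage. Your write-up simply makes explicit the bijection and the uniformity property of modules that the paper leaves implicit.
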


\begin{proof}
Note that any linear extension $\ell:\cP\to\bn$ can be decomposed into an $\cM$-linear extension followed by a linear extension of $\cP_\cM$.  
Each factor in the formula accounts for the independent choices made at the corresponding stage of the partition.
\end{proof}

Since our result based on the structure of the incomparability graph, we recall the following known result:
\begin{lemma}\label{lem:sameIG}\cite{Edel89}
If two posets $\cP$ and $\cQ$ have the same incomparability graph, then $\lex(\cP)=\lex(\cQ)$.
\end{lemma}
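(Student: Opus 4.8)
The plan is to prove the stronger claim that $\lex(\cP)$ is determined by the comparability graph $\CG_\cP$ alone. Since $\IG_\cP$ and $\CG_\cP$ are complements, $\cP$ and $\cQ$ having the same incomparability graph is the same as their having the same comparability graph $G$, and the two counts must then agree. I would argue by induction on $|\cP|$ using the \emph{modular decomposition} of the undirected graph $G$, which is an invariant of $G$ and is therefore shared by $\cP$ and $\cQ$; it thus suffices to express $\lex(\cP)$ in terms of data read off from $G$.

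Before the induction I would record two facts. First, $\lex$ is invariant under dualization: if $\ell\colon\cP\to\bn$ is a linear extension then $x\mapsto n+1-\ell(x)$ is a linear extension of the dual poset $\cP^{*}$, and this assignment is a bijection, so $\lex(\cP)=\lex(\cP^{*})$. Second, the decomposition Lemma~\ref{lem:JSgen} lets me factor $\lex(\cP)=\lex(\cP_\cM)\prod_i\lex(M_i)$ through any modular partition $\cM$, and Lemma~\ref{lem:sums} evaluates disjoint and ordinal sums in terms of block sizes and the blocks' own counts.

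The induction then splits according to the three possible types of the top node of the modular decomposition of $G$. If $G$ is disconnected, every cross-component pair of $\cP$ is incomparable, so $\cP$ is the disjoint sum of the posets induced on its components; by Lemma~\ref{lem:sums} the count is determined by the component sizes and the components' own counts, each fixed by $G$ through the inductive hypothesis. If instead $\bar G$ is disconnected, the quotient of $G$ by the $\bar G$-components is a complete graph, so these blocks are totally ordered and $\cP$ is their ordinal sum; again by Lemma~\ref{lem:sums} the count depends only on the block sizes and the blocks' counts, and in particular not on the order in which the totally ordered blocks are stacked, so it is determined by $G$. The remaining case is when both $G$ and $\bar G$ are connected: here I take $\cM$ to be the maximal modular partition and apply Lemma~\ref{lem:JSgen}, where each $\lex(M_i)$ is determined by $G|_{M_i}$ by induction and $\cP_\cM$ is the blow-up of the \emph{prime} quotient by chains.

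The main obstacle is exactly this prime case, and it is resolved by Gallai's structure theorem for comparability graphs: a prime comparability graph is uniquely partially orderable, i.e.\ it admits a unique transitive orientation up to global reversal. Consequently the quotient poset underlying $\cP_\cM$ is determined by $G$ up to dualization, and since replacing each module by a chain of the same length commutes with dualization, $\cP_\cM$ itself is determined up to dualization; the first fact then gives that $\lex(\cP_\cM)$ depends only on $G$. Combining the three cases closes the induction and proves the lemma. All of the real content sits in invoking the modular-decomposition and unique-orientability machinery; the rest is bookkeeping with Lemmas~\ref{lem:JSgen} and~\ref{lem:sums} and the reversal symmetry of $\lex$.
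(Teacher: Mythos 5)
The paper does not actually prove this lemma: it is imported from the literature via the citation \cite{Edel89}, so there is no in-paper argument to compare against. Your proposal is a reconstruction of the standard proof that $\lex$ is a comparability invariant, and its architecture --- induction over the modular decomposition with parallel, series and prime nodes, the duality invariance $\lex(\cP)=\lex(\cP^{*})$, Lemma~\ref{lem:JSgen}, and Gallai's unique-orientability theorem for prime comparability graphs --- is the right one. (An equivalent, slightly shorter packaging, due to Dreesen, Poguntke and Winkler, reduces comparability invariance to invariance under reversing a single autonomous subset, which then follows in one line from Lemma~\ref{lem:JSgen} and duality; your version unwinds that reduction into an explicit induction.)

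One step must not be left implicit, precisely because this paper devotes Section~\ref{sec:notposet} to it: a module of the comparability graph need not be a module of the poset, and both Lemma~\ref{lem:JSgen} and the very definitions of $\cP_\cM$ and of ``the quotient poset'' require a modular partition of the \emph{poset}. In your prime case you take $\cM$ to be the maximal modular partition of $G$ and immediately apply Lemma~\ref{lem:JSgen}; for this you must first know that every transitive orientation of $G$ orients each maximal module consistently with respect to each outside vertex, so that $\cM$ is genuinely a poset partition and $\cP/\cM$ is a well-defined transitive orientation of the prime quotient. This is true, but it is a strictly stronger input than the fact you cite (unique partial orderability of the prime quotient), and the word ``consequently'' in your prime case is doing illegitimate work: uniqueness of the quotient's orientation tells you nothing until you have shown that $\cP$ induces an orientation of the quotient at all. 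The needed consistency statement is part of Gallai's full theorem (transitive orientations of $G$ are exactly the independent combinations of one of the two orientations of the prime quotient with arbitrary transitive orientations of the parts), so the gap is fillable by citing that stronger statement; the analogous check in your series case, that the components of $\bar G$ are stacked consistently, is routine (walk along an incomparability path inside a component). With those points made explicit the induction closes and the proof is correct.
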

Note the above result was originally stated in terms of the comparability graph, but since the comparability and incomparability graph are the graph complement of each other, it can be stated in terms of either.

Moreover, recall that a modular partition of a comparability graph needs not to be a modular partition of the poset, see Section~\ref{sec:notposet}.
However, if the quotient graph of the modular partition is the comparability graph of a poset, it is always possible to build a new poset with the same comparability graph and thus with the same number of linear extensions by Lemma~\ref{lem:sameIG}.  This can be made explicit with the following lemma.

\begin{lemma}\label{lem:reorient2}
Let $(\cP, \leq)$ be a poset and $\cM=\{M_1,\ldots,M_k\}$ be a modular partition of $\IG_\cP$.
If $\CG_\cP/\cM$ is transitively orientable, then there is a poset $(\cP, \sqsubseteq)$ that has the same incomparability graph as $(\cP, \leq)$ and for which $\cM$ is a poset partition and the order of the modules is a linear extension of $(\cP/\cM, \sqsubseteq)$, that is, if $i<j$ then $M_i\sqsubseteq M_j$.
\begin{proof}
Let $\cQ=\CG_\cP/\cM$, we can orient the edges in $\cQ$ such that $M_i\rightarrow M_j$ if and only if $i<j$. 
Since $\cM$ is a transitive orientation, $\cQ$ is a poset. Let $(\cP, \sqsubseteq)= \oplus_\cQ \{(M_i, \leq)\}_{M_i\in\cM}$.
Note that by construction the ordering of the modules is a linear extension of $\cQ$, thus by the definition of the lexicographic sum
each $M_i$ is a poset module.
\end{proof}
\end{lemma}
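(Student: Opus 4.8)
The plan is to realize the desired poset $(\cP,\sqsubseteq)$ as a lexicographic sum of the modules over a poset structure imposed on the skeleton, and then check that this sum has the three required properties. The hypothesis that $\CG_\cP/\cM$ is transitively orientable is exactly what makes the cross-module order consistent, so the whole construction is driven by that orientation.

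First I would turn the skeleton into a poset. Set $\cQ=\CG_\cP/\cM$, an undirected graph on the module-vertices $v_1,\ldots,v_k$. By hypothesis $\cQ$ is transitively orientable, so fix a transitive orientation $\to$; this makes $\{v_1,\ldots,v_k\}$ into a poset. Since every finite poset admits a linear extension, I may relabel the modules so that the index order $v_1,\ldots,v_k$ is a linear extension of this poset, i.e. $v_i\to v_j$ implies $i<j$. This relabeling secures the final clause of the statement. Next I would define $\sqsubseteq$ on $\cP$ to be the lexicographic sum $\oplus_\cQ\{(M_i,\leq)\}_{i}$: for $x\in M_i$ and $y\in M_j$ with $i\neq j$ declare $x\sqsubseteq y$ iff $v_i\to v_j$ in $\cQ$, and for $x,y$ in the same module declare $x\sqsubseteq y$ iff $x\leq y$. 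By the Remark following the definition of the lexicographic sum, such a sum over a poset is itself a poset and $\cM$ is automatically a modular partition of it, with the module order a linear extension of $(\cP/\cM,\sqsubseteq)=\cQ$. This delivers properties (b) and (c).

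Finally I would verify that the incomparability graph is unchanged. Within a single module the new relation is literally the restriction of $\leq$, so within-module comparabilities agree. For $x\in M_i$ and $y\in M_j$ with $i\neq j$, the fact that $\cM$ is a modular partition of $\IG_\cP$ forces cross-comparability to be all-or-nothing: either every pair between $M_i$ and $M_j$ is $\leq$-comparable or none is, and by definition the skeleton edge $v_i-v_j$ of $\CG_\cP/\cM$ is present exactly in the former case. Since $\sqsubseteq$ makes $x$ and $y$ comparable precisely when $v_i-v_j$ is an edge of $\cQ=\CG_\cP/\cM$, the cross-module comparabilities of $\sqsubseteq$ coincide with those of $\leq$. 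Hence $\IG_{(\cP,\sqsubseteq)}=\IG_\cP$, which is property (a); Lemma~\ref{lem:sameIG} then yields equality of the linear-extension counts as a bonus.

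The step I expect to be the main obstacle is the conceptual matching in the last paragraph rather than any calculation: one must check that the transitive orientation of the skeleton actually induces a \emph{transitive} relation across three distinct modules (this is where transitive orientability, as opposed to mere orientability, is used), and that the all-or-nothing cross-comparability forced by modularity of $\IG_\cP$ lines up exactly with the skeleton edges being oriented, so that the incomparability graph is genuinely \emph{preserved} and not merely of the same shape. Everything else, namely reflexivity, antisymmetry, and the within-module agreement, is routine once the construction is recognized as a lexicographic sum.
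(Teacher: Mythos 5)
Your proposal is correct and follows essentially the same route as the paper: both construct $(\cP,\sqsubseteq)$ as the lexicographic sum $\oplus_\cQ\{(M_i,\leq)\}$ over a transitive orientation of the skeleton $\cQ=\CG_\cP/\cM$, with the module order a linear extension of $\cQ$. You additionally spell out two points the paper leaves implicit --- relabelling the modules so the index order extends the chosen transitive orientation, and the all-or-nothing cross-module comparability that guarantees the incomparability graph is preserved --- which is a sound and welcome elaboration rather than a different argument.
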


The argument underlying our main proof is similar to that used in \cite[Proposition 3.4]{Hab87}, where it is shown that the number of linear extensions of posets with bounded decomposition diameter can be computed in polynomial time. We use this approach to give formulas for certain posets and as our proof is constructive, we can obtain the set of all linear extensions, not just the number.
Intuitively, given a poset and a linear extension $\ell:\cP\to [n]$, one may subdivide $[n]$  into consecutive intervals. To compute $\lex(\cP)$, we first determine all admissible partitions of $[n]$  compatible with the structure of $\cP$, and then, for each possible partition, we count the number of local linear extensions on the resulting sub-intervals. This is formalised using the notion of a pivot, which we define below.

\begin{definition}[Pivot definition]\label{def:pivot}
Let $\cP$ be a poset and $\cM=\{M_r\}_{r=1}^{k}$ a modular partition of $\cP$ where
the numbering of the modules is a linear extension of $\cP/\cM$, i.e. if $\cM_s > \cM_r$  then $s>r$. 
Let~$\ell:\cP_\cM\to [n]$ be a linear extension.  
For each $1\leq r \leq k$ we define the 
\emph{$r$-th pivot element}, denoted~$e_r$, as the largest element of $M_r$, such that $\ell(e_r) < \ell(M_s)$ in $[n]$ for all $s>r$.  If there is no such element~$e_r=\emptyset$. 
We call $M_r^{first}$ the elements of $M_r$ that are less than or equal to the pivot in~$M_r$, and $M_r^{last}$ the elements larger than the pivot,  that is, 
\[M_r^{first} =  \{x\in M_r\,|\, x\leq e_r\}
\quad\quad \text{ and } \quad\quad
M_r^{last} =  \{x\in M_r\,|\, x > e_r\}.\]
\end{definition}

We now have all the elements to state our main result. 

\begin{theorem}\label{thm:joined}
Let $\cP$ be a poset and $\cM=\{M_1,M_2,\ldots,M_{2D+1}\}$ be a modular partition of the incomparability graph $\IG_\cP$. 
Some of the even modules may not exist in the decomposition, by abuse of notation we treat such modules as empty solely to simplify the indexing of the partition structure.
 If the following conditions on the skeleton graph $\IG_\cP/\cM$ hold: 
	\begin{itemize}
		\item there is an edge between $M_{2k-1}$ and $M_{2k+1}$; 
		\item if $M_{2k}$ is not empty there is an edge between $M_{2k-1}$ and $M_{2k}$; 
		\item there may or may not be an edge between $M_{2k}$ and $M_{2k+1}$;
		\item there are no other edges.
	\end{itemize}
Then
\[\lex(\cP) = 
\prod_{s=1}^{2D+1} \lex(M_s)
\sum_{(i_1, i_3, \ldots , i_{2D-1})}
\prod_{d=1}^{D-1} \binom{m_{2d}+m_{2d-1}+i_{2d+1}-i_{2d-1}-1}{m_{2d-1}-i_{2d-1}} \Phi_{2d}\]
where $m_k=|M_k|$,  the sum runs over tuples of all possible pivots, i.e.
\[ 0 \leq i_{2d-1} \leq m_{2d-1} \text{ for } 1\leq d<D  \quad\quad \text{and} \quad\quad i_{2D+1}=m_{2D+1}, \]
and 
\begin{equation*}
\Phi_{2d} =
\begin{cases}
\binom{m_{2d}+i_{2d+1}}{m_{2d}} & \text{ if } M_{2d} \neq \emptyset \text { and } M_{2d} \text{ and } M_{2d+1} \text{  are incomparable,} \\
1 														  & \text{ otherwise. }
\end{cases}
\end{equation*}
In the formulas we use the convention $\binom{-1}{0}=1$ and $\binom{n-1}{n}=0$ for $n>0$.
\end{theorem}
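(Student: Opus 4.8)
The plan is to strip the problem down to a pure interleaving count on chains and then read that count off the pivot structure. First I would invoke Lemma~\ref{lem:sameIG} together with Lemma~\ref{lem:reorient2} to replace $(\cP,\le)$ by a poset with the same incomparability graph in which $\cM$ is an honest modular partition of the poset and the index order $M_1,\ldots,M_{2D+1}$ is a linear extension of the skeleton poset $\cP/\cM$; since $\IG_\cP$ is unchanged, $\lex(\cP)$ and every quantity appearing in the formula are preserved. The hypothesis needed for Lemma~\ref{lem:reorient2} is that $\CG_\cP/\cM$ is transitively orientable, and I would check this directly for the prescribed skeleton: orienting each comparable pair $M_i,M_j$ by its index ($M_i\sqsubset M_j$ when $i<j$) is transitive, because a failure would need $M_a\sqsubset M_b\sqsubset M_c$ with $M_a\parallel M_c$, forcing $(a,c)$ to be the only skeleton adjacency with an index strictly between its endpoints, namely $\{2k-1,2k+1\}$ with middle $2k$; but then $M_{2k}$ is incomparable to $M_{2k-1}=M_a$, contradicting $M_a\sqsubset M_b$. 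With this reduction, Lemma~\ref{lem:JSgen} factors out $\prod_s\lex(M_s)$, and it remains to prove that $\lex(\cP_\cM)$ equals the displayed sum, where every module is now a chain $\mathbf{m_s}$.

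For $\cP_\cM$ I would analyse an arbitrary linear extension $\ell$ through the pivots of Definition~\ref{def:pivot}. The first observation is that the pivots increase along the spine: each nonempty $e_{2d-1}$ precedes every element of $M_{2d+1}$, hence precedes $e_{2d+1}$, so the pivots occur in spine order. These pivots cut $\ell$ into consecutive \emph{blocks}, where block $d$ is designed to contain exactly $M_{2d-1}^{last}$, the even module $M_{2d}$, and the lower part $M_{2d+1}^{first}$ of the next spine module, with $M_1^{first}$ sitting below block $1$ and block $D$ absorbing all of $M_{2D+1}$ (using $i_{2D+1}=m_{2D+1}$). The key structural claim — the step I expect to be the main obstacle — is that block $d$ lies entirely below block $d+1$, so that $\ell$ is the concatenation of independent local interleavings. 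This is immediate for comparable pairs, but must be argued for the incomparable pairs (e.g. $M_{2d}\parallel M_{2d+1}$ in the necklace case, and $M_{2d-1}^{last}$ against $M_{2d+1}^{last}$): given $x$ in block $d$ and $y$ in block $d+1$ with $y\in M_{2d+1}^{last}$, the pivot places some $z\in M_s$ with $s\ge 2d+2$ below $y$, while the comparability $x\sqsubset M_s$ forces $x$ below $z$, so chaining $x<z<y$ shows $x$ precedes $y$. Thus no element leaks across the cut, and within a fixed pivot profile the block ordering is forced despite the incomparabilities.

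Granting the block decomposition, I would count the extensions realising a fixed profile $(i_1,i_3,\ldots,i_{2D-1})$ as a product of local counts, one per block. Within block $d$ the even module $M_{2d}$ and the lower part $M_{2d+1}^{first}$ interleave in $\Phi_{2d}$ ways: freely, giving $\binom{m_{2d}+i_{2d+1}}{m_{2d}}$, when $M_{2d}\parallel M_{2d+1}$, and in a single forced way otherwise. It then remains to shuffle the chain $M_{2d-1}^{last}$, of length $m_{2d-1}-i_{2d-1}$, into the resulting sequence of length $m_{2d}+i_{2d+1}$, subject to the constraint that the \emph{least} element of the block is not from $M_{2d-1}^{last}$ — exactly what the pivot $e_{2d-1}$ demands, since everything in $M_{2d-1}^{last}$ follows a later-module element. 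Such shuffles number $\binom{m_{2d}+i_{2d+1}+(m_{2d-1}-i_{2d-1})}{m_{2d-1}-i_{2d-1}}-\binom{m_{2d}+i_{2d+1}+(m_{2d-1}-i_{2d-1})-1}{m_{2d-1}-i_{2d-1}-1}$, which collapses by Pascal's rule to the stated $\binom{m_{2d}+m_{2d-1}+i_{2d+1}-i_{2d-1}-1}{m_{2d-1}-i_{2d-1}}$. Multiplying the local factors over the blocks and summing over all admissible pivot profiles then yields the formula.

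Finally I would tie off the degenerate behaviour. The conventions $\binom{-1}{0}=1$ and $\binom{n-1}{n}=0$ are precisely what make the shuffle count correct when a block is trivial (an empty $M_{2d-1}^{last}$ gives a unique shuffle, while a nonempty $M_{2d-1}^{last}$ with nothing to shuffle into gives none), and an absent even module is handled by $m_{2d}=0$, which reduces $\Phi_{2d}$ to $1$ and removes $M_{2d}$ from its block. Verifying that Theorems~\ref{thm:path}, \ref{thm:necklace} and~\ref{thm:tree} arise by the appropriate choices of which $M_{2d}$ are present and whether $M_{2d}\parallel M_{2d+1}$ then completes the argument. Throughout, I expect the delicate point to be the block-separation claim, since it is what legitimises treating the interleavings as independent despite the incomparabilities, and it is the only place where the global comparability constraints must be combined with the local pivot definition.
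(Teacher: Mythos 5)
Your proposal is correct and follows essentially the same route as the paper's proof: reduce via Lemma~\ref{lem:reorient2} and Lemma~\ref{lem:sameIG} to a compatible poset, factor out $\prod_s\lex(M_s)$ with Lemma~\ref{lem:JSgen}, use the odd-module pivots to cut a linear extension into consecutive blocks $M_{2d-1}^{last}\cup M_{2d}\cup M_{2d+1}^{first}$, and multiply the local interleaving counts. The only cosmetic differences are that the paper counts the placements of $M_{2d-1}^{last}$ directly as $\binom{|I_{2d}|-1}{|M_{2d-1}^{last}|}$ rather than via your subtraction-and-Pascal step, and that for $x\in M_{2d+1}^{first}$ the block-separation follows from the pivot definition itself rather than from comparability to later modules (since $M_{2d+1}$ may be incomparable to $M_{2d+2}$ and $M_{2d+3}$).
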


\begin{figure}
\begin{center}
\includegraphics[scale=0.69]{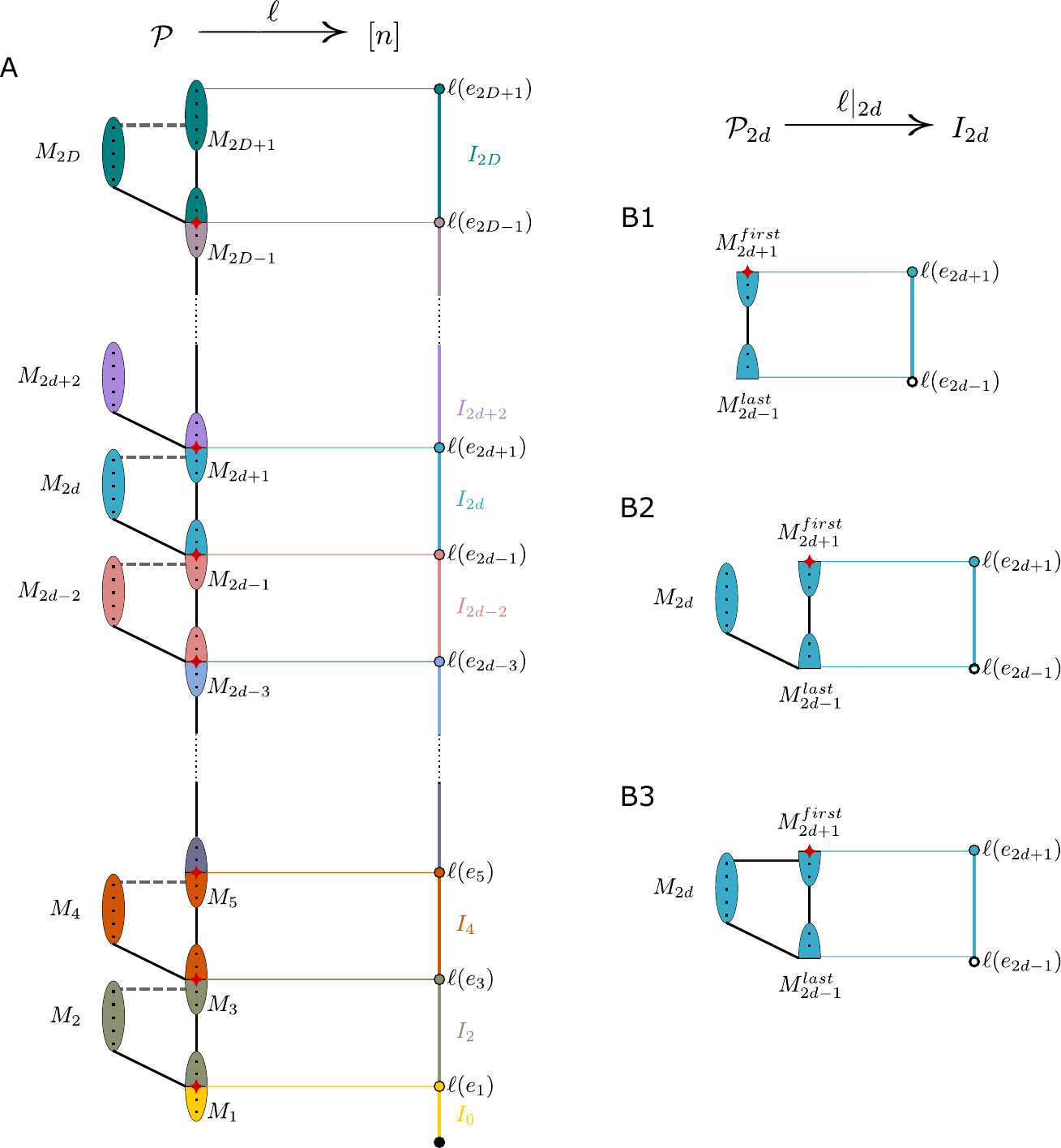}
\end{center}
\caption{\textbf{A:} Incomparability graph of the poset and linear extension.  Dash gray lines represent edges that may or may not be in the incomparability graph.  All odd modules are non-empty while even modules are allowed to be empty.  The red stars in the odd modules show the pivots giving the decomposition of $[n]$ into intervals. 
\textbf{B:}  Possible structures of the poset $\cP_{2d}$. }
\label{fig:poset_cartoon}
\end{figure}

\begin{proof}
Note that orienting the edges of the skeleton of the comparability graph $\CG_\cP/\cM$ according to the ordering of the modules
(i.e. $M_i\rightarrow M_j$ when $i<j$) is a transitive orientation.
To prove this we show that for any path $M_i\rightarrow M_j\rightarrow M_k$ in $\CG_\cP/\cM$, we must have $k>i+2$ so $M_i$ is comparable to $M_k$, thus $M_i\rightarrow M_k$.
To see that $k>i+2$, note that we cannot have $j=i+1$ and $k=i+2$, since $M_{2k-1}$ is always incomparable to $M_{2k}$ (if it exists, i.e. is non-empty), 
so if $i$ is odd we cannot have~$j=i+1$ and if $i$ is even then $i+1$ and $i+2$ must be incomparable, and if $M_{2k}$ is empty then~$k>i+2$ trivially.

So by Lemma~\ref{lem:reorient2}, if $\cM$ is not a poset partition or the order of the modules is not a linear extension of~$\cP$, we can create a new poset $(\cP,\sqsubseteq)$, 
for which these are true
and which has the same incomparability graph, thus has the same number of linear extensions by Lemma~\ref{lem:sameIG}. So we will count the linear extensions of $(\cP,\sqsubseteq)$ to get our result.
By Lemma \ref{lem:JSgen} it is enough to show that $\lex(\cP_{\cM})$  is given by the second part of the product. 
Thus,  to ease readability,  in the reminder of the proof we assume that $\cP_{\cM}=\cP$.

The idea of the proof is as follows.  
We first show that given the structure of the poset,  and a linear extension $\ell$ we can use the pivots of the odd modules to subdivide the poset $[n]$ into consecutive intervals $I_0<I_2<\cdots<I_{2D}$. 
To count $\lex(\cP)$ we determine all possible pivots for each of the odd modules.  For a fixed but arbitrary choice of pivots we count all possible local linear extensions 
\[\ell|_{2d}:l^{-1}(I_{2d}) \to I_{2d}\]
for $0\leq d \leq D$,  which we denote by $\lex({\cP_{2d}})$.
Then $\lex(\cP)$ is given by 
\[\lex(\cP)=\sum_{\text{possible pivots}} \prod_{d} \lex({\cP_{2d}}).\]

We start by determining the partition of $[n]$ into subintervals.
Let $\ell:\cP\to [n]$ be an arbitrary linear extension.  Given the structure of the incomparability graph, the definition of the pivot and the fact that the ordering of the modules is a linear extension of the skeleton of the comparability graph, we have that 
$M_{2d-1}^{first} < M_k$ for all $k>2d-1$.  Thus,  
\[\ell^{-1}([1, e_{2d+1}]) = M_{2d+1}^{first} \cup \bigcup_{k=1}^{2d}  M_k, \]
and 
\begin{equation} \label{h:tree}
\ell(e_{2d+1})=i_{2d+1}+\sum_{k=1}^{2d}m_k.
\end{equation}
Therefore,  the pivots of the odd modules split the poset $[n]$ into consecutive subintervals 
\begin{gather*}
I_0=[1 ,\ell(e1)],\quad\quad I_2=(\ell(e_1), \ell(e_3)],  \quad\quad\ldots \\I_{2D-2}=(\ell(e_{2D-3}),\ell(e_{2D-1})],\quad\quad I_{2D}=(\ell(e_{2D-1}),\ell(e_{2D+1})]=(\ell(e_{2D-1}),n],
\end{gather*}
for which 
\[\cP_0:=\ell^{-1}(I_0)=M_{1}^{first}, \]
\[\cP_{2d} :=\ell^{-1}(I_{2d})=M_{2d-1}^{last}\cup M_{2d}\cup M_{2d+1}^{first} \text{ for any } d\geq 1.\]
See Figure \ref{fig:poset_cartoon} for a sketch of this decomposition. 
Note in particular, that the number of elements in these intervals is given by 
\begin{equation}
|I_{2d}| =  \ell(e_{2d+1})-\ell(e_{2d-1}) = i_{2d+1}+\sum_{k=1}^{2d}m_k - i_{2d-1}+\sum_{k=1}^{2d-2}m_k = i_{2d+1}+ m_{2d}+ m_{2d-1} - i_{2d-1}.
\label{eq:interval_size}
\end{equation}\\

Given a fixed but arbitrary choice of pivots we now count the local linear extensions $l_{2d}:\cP_{2d}\to I_{2d}$.  
For $d=1$ we have 
\[\ell_0:M_1^{first} \to I_0.\]
Since $M_1$ is totally ordered,  there is only one choice for this poset map.  
For $d>1$ we count the possible linear extensions 
\[\ell|_{I_{2d}}:\cP_{2d}=M_{2d-1}^{last}\cup M_{2d}\cup M_{2d+1}^{first} \to I_{2d},\]
in two different cases.

In the first case,  either $M_{2d}$ is empty or $M_{2d}$ is not empty and $M_{2d}$ and $M_{2d+1}$ are comparable, corresponding to the local structure of the chain or the tree respectively (see Figure \ref{fig:poset_cartoon} B1 and B2).
Since all modules are linearly ordered and $ M_{2d} < M_{2d+1}$ if $ M_{2d}\neq\emptyset$,  then $\ell|_{2d}$ is completely determined once the images of the elements of $M_{2d-1}^{last}$ are determined. 
If $I_{2d}\neq \emptyset$, then by the definition of pivot an element of $M_{2d-1}^{last}$ cannot be mapped to its first element.
Thus,   by \eqref{eq:interval_size} 
\[\lex(\cP_{2d})
=\binom{|I_{2d}|-1}{|M_{2d-1}^{last}|}
=\binom{i_{2d+1}+m_{2d}+m_{2d-1}-i_{2d-1}-1}{m_{2d-1}-i_{2d-1}}.
\]
If $I_{2d}= \emptyset$,  there are no choices for its local linear extensions.  This coincides with the formula above, since by \eqref{eq:interval_size} in this case we have that: $m_{2d}=0$,  $m_{2d-1}=i_{2d-1}$ and $i_{2d+1}=0$. Thus, the coefficient obtained is $\binom{-1}{0}=1$.

In the second case,  $M_{2d}$ is not empty and $M_{2d}$ and $M_{2d+1}$ are incomparable, corresponding locally to the structure of a necklace  (see Figure \ref{fig:poset_cartoon}B3).
Since all modules are linearly ordered $\ell|_{2d}$ is completely determined once the images of the elements of $M_{2d-1}^{last}$ and $M_{2d}$ are determined. 
As before,  the first element $I_{2d}$ cannot have an element of $M_{2d-1}^{last}$ mapped to it.
Thus,  by \eqref{eq:interval_size} 
\begin{align*}
\lex({\cP_{2d}})
&=\binom{|I_{2d}|-1}{|M_{2d-1}^{last}|}\binom{|I_{2d}|-|M_{2d-1}^{last}|}{|M_{2d}|}\\
&=\binom{i_{2d+1}+m_{2d}+m_{2d-1}-i_{2d-1}-1}{m_{2d-1}-i_{2d-1}}\binom{i_{2d+1}+m_{2d}}{m_{2d}}.
\end{align*}\\

We finish the proof by determining the possible values for the pivots of the odd modules. 
By definition only the last element of the last module can be a pivot;
and for any $d < D$,  any element of~$M_{2d-1}$ can be a pivot,  since $M_{2d-1}$ and $M_{2d+1}$,  are incomparable and whenever  $M_{2d}\neq \emptyset$ also~$M_{2d-1}$ and $M_{2d}$ are incomparable. 
Therefore,  we have that all possible odd pivot indices are given by
\[ 0 \leq i_{2d-1} \leq m_{2d-1} \text{ for } 1\leq i<D  \quad\quad \text{and} \quad\quad i_{2D-1}=m_{2D-1}.\]

However,  it cannot hold that $m_{2d}=0$ and $i_{2d-1}=i_{2d+1}=0$.  We show this by contradiction. 
Assume it is true.  Since $i_{2d-1}=0$ and $m_{2d}=0$,  there is a $y\in M_{2d+1}$ such that $\ell(y)<\ell(M_{2d-1})$.  Similarly, since $i_{2d+1}=0$ there is an element $z\in M_{2d+2}\cup M_{2d+3}$ such that $\ell(z)<\ell(M_{2d+1})$. 
Thus we have that $\ell(z)<\ell(y)<\ell(M_{2d-1})$,  but this is not possible since $\ell(M_{2d-1})<\ell(M_{2d+2}\cup M_{2d+3})$.
Note however,  that if $m_{2d}=0$ and $i_{2d-1}=i_{2d+1}=0$, then 
\begin{align*}
\lex({\cP_{2d}})
&=\binom{0+0+m_{2d-1}-0-1}{m_{2d-1}}\\
&=\binom{m_{2d-1}-1}{m_{2d-1}}=0, 
\end{align*}
eliminating this choice of pivots from the sum.
On the other hand,  it is possible to have $m_{2d}>0$ and~$i_{2d-1}=i_{2d+1}=0$.
\end{proof}

The proofs of Theorems~\ref{thm:necklace}, \ref{thm:tree} and Theorem~\ref{thm:path} follow immediately from Theorem~\ref{thm:joined}.
The necklace (Theorem~\ref{thm:necklace}) follows from $\Phi_{2d}$ always being the top case, since for all $d$ we have that $M_{2d}$ and $M_{2d+1}$ are non-empty and incomparable. The tree (Theorem~\ref{thm:tree}) is the case when $M_{2d}$ and~$M_{2d+1}$ are always non-empty and comparable, thus we always have $\Phi_{2d}=1$. For Theorem~\ref{thm:path} we let all the even indexed modules be empty ($M_{2d}=\emptyset,\,\forall d$), and reindex the odd modules from $2d-1$ to~$d$. 
Indeed, we can view Theorem~\ref{thm:joined} as requiring that the skeleton graph is locally either a necklace, tree or path, thus is more general than the three results in Section~\ref{sec:necktreepath}.

\subsection{When is a partition of the incomparability graph not a poset partition}\label{sec:notposet}

Recall that a partition of the incomparability graph is a partition of the comparability graph and vice versa. Moreover, every modular partition of a poset induces a modular partition of both graphs; however, the converse does not hold in general. Indeed, the reverse implication fails when the elements of two modules are comparable in ``inconsistent'' ways. We make this precise below.

\begin{definition}
Consider a poset $\cP$ and a modular partition $\cM=\{M_1,\ldots,M_k\}$  of its comparability graph $\CG(\cP)$. The partition $\cM$ is not a poset partition if there are two distinct modules $M_i, M_j$ and~$x,y \in M_i$, $a,b\in M_j$ such that $x<a$ and $y>b$.  Note that it could be that either $x=y$ or $a=b$.
We call two such modules \emph{inconsistently comparable} and we denote this by $M_i\leftrightarrow M_j$ and we call the modules $M_i$ and $M_j$ \emph{inconsistent}.
We denote by $M_i-M_j$ if the two modules are comparable, either consistently or inconsistently and by $M_i\parallel M_j$ if they are incomparable.
\end{definition}

Since our results are phrased in terms of modular partitions of the incomparability graph rather than the poset itself, they are more general. However, a modular partition of the incomparability graph that satisfies the conditions of Theorem~\ref{thm:joined} is almost always also a modular partition of the poset. The only exceptions arise from the structure of the final two modules in the partition. We detail this in the following corollary 

\begin{corollary}\label{cor:the_ones}
Let $\cP$ be a poset and $\cM=\{M_1,M_2,\ldots,M_{2D+1}\}$ be a modular partition of the incomparability graph $\IG_\cP$ which satisfies the conditions of Theorem~\ref{thm:joined}. Then at least one of the following is true:
\begin{enumerate}
\item $\cM$ is a poset modular partition, or
\item $\{M_1,M_2,\ldots,M_{2D-1},M_{2D}\cup M_{2D+1}\}$ is a poset modular partition, or
\item $D=2$ and $\{M_1\cup M_2\cup M_4\cup M_5,M_3\}$ is a poset modular partition.
\end{enumerate}
\end{corollary}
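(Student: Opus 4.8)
The plan is to analyze exactly when the modular partition $\cM$ of $\IG_\cP$ fails to be a poset modular partition, and to show that the only obstructions are confined to the tail of the partition. By the definition of an inconsistently comparable pair, $\cM$ fails to be a poset partition precisely when there exist two modules $M_i \leftrightarrow M_j$. My first step is to observe that inconsistency can only occur between modules that are \emph{comparable} in the skeleton, since incomparable modules $M_i \parallel M_j$ have no cross-edges in $G_\cP$ to orient inconsistently. Under the edge conditions of Theorem~\ref{thm:joined}, the comparabilities in the skeleton are exactly the pairs $(M_{2k-1}, M_{2k+1})$, together with some of the pairs $(M_{2k}, M_{2k+1})$; all pairs $(M_{2k-1}, M_{2k})$ are incomparable. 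So the candidate inconsistent pairs are severely restricted.

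Next I would exploit the transitivity argument already established in the proof of Theorem~\ref{thm:joined}: orienting the skeleton of $\CG_\cP/\cM$ by module index is a transitive orientation, and any comparable pair $M_i - M_j$ with $i<j$ forces a directed skeleton edge $M_i \to M_j$. The key point is that if $M_i$ and $M_j$ are comparable \emph{and} $M_i$ sits strictly below some later module $M_\ell$ while $M_j$ relates to an earlier module, transitivity propagates the comparability consistently, ruling out inconsistency \emph{except} where no such anchoring later/earlier module exists. Concretely, for a comparable pair $(M_{2k-1}, M_{2k+1})$ with $2k+1 < 2D+1$, the existence of the further edge to $M_{2k+3}$ (and the incomparability of $M_{2k+1}$ with $M_{2k+2}$) forces, via transitivity through the chain, that the comparability $M_{2k-1} - M_{2k+1}$ is consistent. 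Thus any inconsistency is pushed to the very end of the necklace/tree, where there is no subsequent module to enforce consistency.

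The main case analysis then concerns the final comparable pairs involving $M_{2D-1}$, $M_{2D}$, $M_{2D+1}$. If no inconsistency arises at all, we are in case~(1). If the only inconsistency is between $M_{2D}$ and $M_{2D+1}$ (the terminal pair, which under $\Phi_{2D}$ may be incomparable or comparable), then merging them resolves it: I would verify that $M_{2D}\cup M_{2D+1}$ is a legitimate module by checking that every earlier module relates to $M_{2D}$ and to $M_{2D+1}$ with the same (consistent) comparability pattern, which follows because both share the common neighbour $M_{2D-1}$ in the skeleton and no earlier module is adjacent to either except through that anchor; this yields case~(2). The residual exceptional configuration is the small depth-$2$ necklace, where the closing of the necklace creates a short cycle $M_1, M_3, M_5$ with no further module to anchor orientations; here the odd modules $M_1, M_3, M_5$ and the even modules $M_2, M_4$ can be grouped as $M_1\cup M_2\cup M_4\cup M_5$ against $M_3$, giving case~(3).

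\textbf{Main obstacle.} The delicate step is the coarsening verification: after merging modules (as in cases~(2) and~(3)), I must confirm the merged set is genuinely a \emph{poset} module, i.e. not merely a module of $\IG_\cP$ but one whose external comparabilities are all consistently oriented. This requires carefully tracking, for each earlier module, that it relates identically to both halves of the merge — an argument driven entirely by the skeleton adjacency structure imposed by Theorem~\ref{thm:joined}. I expect the $D=2$ case to need separate treatment precisely because the necklace is too short for the transitivity propagation to pin down orientations, which is why it surfaces as a genuinely distinct exceptional case rather than being absorbed into case~(2).
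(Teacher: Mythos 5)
There is a genuine gap, and it originates in a misreading of which module pairs can be inconsistently comparable. Your opening observation is right: inconsistency can only arise between modules all of whose cross-pairs are \emph{comparable}, i.e.\ between modules that are \emph{non-adjacent} in the skeleton of $\IG_\cP$. But you then invert the hypotheses of Theorem~\ref{thm:joined}: those conditions are stated on the \emph{incomparability} skeleton, so the pairs $(M_{2k-1},M_{2k+1})$ and $(M_{2k-1},M_{2k})$ are precisely the \emph{incomparable} ones, where no inconsistency is possible at all, while the comparable pairs are all the ``distant'' ones --- $(M_1,M_4)$, $(M_1,M_5)$, $(M_2,M_5)$, and in general every pair not listed among the incomparability edges, plus $(M_{2k},M_{2k+1})$ when that edge is absent. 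Consequently the candidate inconsistent pairs are not ``severely restricted'' to consecutive modules; they are the large set of non-consecutive pairs, and your transitivity-propagation step is applied to pairs $(M_{2k-1},M_{2k+1})$ for which there is nothing to propagate, while the genuinely dangerous pairs are never examined. Your localization of all possible inconsistency to the terminal pair $(M_{2D},M_{2D+1})$ is therefore unsupported, and with it the case analysis that delivers conclusions (2) and (3).

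The paper's route avoids this pointwise analysis entirely. It first proves a general dichotomy (Proposition~\ref{prop:pos2mod}): if a modular partition of $\CG_\cP$ is not a poset partition, then either it admits a proper coarsening into unions of its modules, or the skeleton of $\CG_\cP/\cM$ has a dominating vertex. This is established by taking equivalence classes under the relation generated by inconsistent comparability and invoking Lemma~\ref{lemma:module_paths}. For the corollary, a dominating vertex in the comparability skeleton would be an isolated vertex in the incomparability skeleton, which is excluded because the conditions of Theorem~\ref{thm:joined} force that skeleton to be connected; so a coarsening must exist, and the only modules with identical external neighbourhoods (hence the only mergeable ones) are $M_{2D}$ and $M_{2D+1}$ when $D>2$, or $M_1,M_2,M_4,M_5$ (all hanging off $M_3$) when $D=2$. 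The resulting coarsened partition admits no further coarsening and still has no isolated vertex, so the contrapositive of Proposition~\ref{prop:pos2mod} shows it is a poset partition. Note also that the $D=2$ exception arises from this neighbourhood-coincidence, not from any ``short cycle among the odd modules'' as you suggest. If you want to salvage a direct argument, you would need to show that every distant comparable pair is consistently comparable --- which is essentially what the equivalence-class argument of Proposition~\ref{prop:pos2mod} packages for you.
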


In order to prove Corollary~\ref{cor:the_ones}, we first prove a more general result, for which the following lemma will be useful. 

\begin{lemma}\label{lemma:module_paths}
Consider a poset $\cP$ with a modular partition $\cM=\{M_1,\ldots,M_k\}$  of its comparability graph $\CG(\cP)$ that is not a modular partition of the poset.  Then the following hold: 
\begin{enumerate}
\item If $M_1 \leftrightarrow M_2 < M_3$ or $M_1 \leftrightarrow M_2 > M_3$ then $M_1-M_3$. 
\item If $M_1 \leftrightarrow M_2 - M_3-M_4$. Then either $M_1 - M_3$ or $M_2 - M_4$.
\end{enumerate}
\end{lemma}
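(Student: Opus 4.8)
The plan is to combine the two inconsistency witnesses guaranteed by $M_1\leftrightarrow M_2$ with transitivity of $\cP$ and the defining property of a module of $\CG_\cP$: since the modules are distinct and nonempty, the moment a single element of one module is comparable to a single element of another module, the module property propagates this to every cross-pair (a vertex outside the module that is adjacent to one vertex of the module is adjacent to all of them). Hence in both parts it suffices to exhibit one comparable cross-pair, and then let the module structure do the rest.

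For the first part I would extract from $M_1\leftrightarrow M_2$ elements $x,y\in M_1$ and $a,b\in M_2$ with $x<a$ and $y>b$. If $M_2<M_3$, then for any $c\in M_3$ we have $a<c$, so $x<a<c$ gives $x<c$; thus $x\in M_1$ is comparable to $c\in M_3$, and applying the module property (first to $M_1$, then to $M_3$) upgrades this to $M_1-M_3$. The case $M_2>M_3$ is symmetric, using $y>b$ and $c<b$ to obtain $c<y$. The only point requiring attention is that the consistent direction of $M_2-M_3$ is precisely what tells us which witness ($x<a$ versus $y>b$) to use.

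For the second part I would argue by contradiction, assuming both $M_1\parallel M_3$ and $M_2\parallel M_4$, and aim for a contradiction that does not split on whether $M_2-M_3$ or $M_3-M_4$ is consistent. First, combining $x<a$ with $M_1\parallel M_3$ and $M_2-M_3$ forces $a$ to lie above every element of $M_3$: were $a<c$ for some $c\in M_3$, then $x<a<c$ would make $x$ comparable to $c$, contradicting $M_1\parallel M_3$. Symmetrically $y>b$ forces $b$ below every element of $M_3$. Now take any $d\in M_4$; by $M_3-M_4$ it is comparable to all of $M_3$, so either $d$ lies above all of $M_3$, in which case $b<c<d$ for any $c\in M_3$ gives $b$ comparable to $d$, or $d<c$ for some $c\in M_3$, in which case $a>c>d$ gives $a$ comparable to $d$. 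Either branch contradicts $M_2\parallel M_4$.

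The main obstacle is the configuration in which $M_1\leftrightarrow M_2$, $M_2\leftrightarrow M_3$ and $M_3\leftrightarrow M_4$ are all inconsistent, where the first part cannot be invoked to shortcut the conclusion; the contradiction argument above is engineered exactly to avoid any case analysis on these internal directions. The decisive step is recognizing that the incomparability $M_1\parallel M_3$, fed by the two witnesses of $M_1\leftrightarrow M_2$, pins an element of $M_2$ above all of $M_3$ and another below all of $M_3$, after which transitivity through $M_3$ reaches $M_4$ in the only two possible positions of $d$. Finally I would note that the degenerate witnesses permitted by the definition cause no trouble: if $a=b$ then the same element of $M_2$ is forced both above and below $M_3$, which already contradicts $M_1\parallel M_3$ and hence directly yields the first disjunct $M_1-M_3$.
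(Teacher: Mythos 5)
Your proposal is correct and follows essentially the same route as the paper's proof: both parts rest on transitivity together with the fact that a single comparable cross-pair between two modules of $\CG_\cP$ upgrades to full comparability, and in part (2) both arguments pin the two inconsistency witnesses in $M_2$ above and below all of $M_3$ and then pass through an element of $M_3$ to reach $M_4$. The only cosmetic difference is that you phrase part (2) as a contradiction against both disjuncts, whereas the paper assumes $M_1\parallel M_3$ and derives $M_2-M_4$ directly.
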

\begin{proof}
Both claims follow by transitivity.  
To see the first case of $(1)$ note that there must be $x\in M_1$ and $a\in M_2$ such that $x<a$ and thus $x<M_3$.
The second case follows similarly.

To see $(2)$ holds, note that if $M_1 \parallel M_3$, then $\forall x\in M_2$ either 
\[M_1>x<M_3 \quad\quad\text{or}\quad\quad M_1<x>M_3.\]
Moreover, since $M_1\leftrightarrow M_2$ there must be $x,y\in M_2$ such that $M_1>x$ and $M_1<y$.  Thus, there is an~$a\in M_3$ such that $x<a<y$. 
Now let $b\in M_4$. Note that if $b<a$ then $b<y$ 
and if $a<b$ then~$x<b$. In either case, $M_2 - M_4$.
\end{proof}

We now use this to show the following, more general, proposition.

\begin{proposition}\label{prop:pos2mod}
Consider a poset $\cP$ and a modular partition $\cM=\{M_1,\ldots,M_k\}$  of its comparability graph $\CG(\cP)$ such that $\cM$ is not a poset partition. Then either:
 \begin{enumerate}
 \item there is a coarser modular partition $\widehat{\cM}=\{\widehat{M}_1,\ldots,\widehat{M}_t\}$, with $1<t<k$, where each $\widehat{M}_i$ is the union of one or more modules of $\cM$, or 
 \item there is a dominating vertex in the skeleton graph of the comparability graph $\CG(\cP)/\cM$, i.e. there is a module that is comparable to all others.
 \end{enumerate}
\end{proposition}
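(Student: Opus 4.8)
The plan is to study the \emph{inconsistency graph} $\mathcal{I}$ on the modules: its vertices are $M_1,\dots,M_k$ and its edges are the inconsistent pairs $M_i\leftrightarrow M_j$. Since $\cM$ is not a poset partition, $\mathcal{I}$ has at least one edge. I would work throughout with the skeleton $S=\CG(\cP)/\cM$, noting that every inconsistent pair is in particular comparable and hence is an edge of $S$, and that a quotient of a comparability graph by modules is again a comparability graph. The whole proof then amounts to a dichotomy on $S$: either some nontrivial sub-collection of modules can be merged into a single block (conclusion~(1)), or comparabilities are forced to pile up on one module (conclusion~(2)).

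The first step produces coarsenings directly from $\mathcal{I}$. Using Lemma~\ref{lemma:module_paths}(1) I would show that for any connected component $C$ of $\mathcal{I}$ the union $\bigcup_{M\in C}M$ is a module of $\CG(\cP)$: if $M_\ell\notin C$ then $M_\ell$ is not inconsistent with any member of $C$, so across each edge $M\leftrightarrow M'$ of $C$ a \emph{consistent} comparability on one side transports to the other via Lemma~\ref{lemma:module_paths}(1), forcing $M_\ell-M\iff M_\ell-M'$; connectivity of $C$ then makes $M_\ell$ uniformly comparable or incomparable to all of $C$. Hence if some component $C$ is a \emph{proper} subset of the modules, merging $C$ and leaving the others untouched gives a modular partition with $t=k-|C|+1$ blocks, and since $|C|\ge 2$ this satisfies $1<t<k$, giving conclusion~(1). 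The same reasoning disposes of the elementary structural cases: if $S$ is disconnected a component (necessarily of size $\ge 2$, as $S$ has an edge) can be merged to give~(1); if $\bar S$ is disconnected either some co-component has size $\ge 2$, again giving~(1), or $S$ is complete and every vertex dominates, giving~(2).

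The remaining case is the one where no merging is available, that is, where $S$ has no proper nontrivial module at all; equivalently, by the previous paragraph, $\mathcal{I}$ is connected and spans every module. Here I must locate a dominating vertex, and the tool is Lemma~\ref{lemma:module_paths}(2): applied to an inconsistency path $M_a\leftrightarrow M_b\leftrightarrow M_c\leftrightarrow M_d$ (all four consecutive pairs being comparable), it forces $M_a-M_c$ or $M_b-M_d$, thereby upgrading a module that is inconsistent with two neighbours into one comparable with a module two steps away. For a four-module path this single application already yields a module comparable to all of $M_a,M_b,M_c,M_d$: in the first alternative $M_c$ is comparable to $M_a,M_b,M_d$, and in the second $M_b$ is comparable to $M_a,M_c,M_d$. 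I would then argue in general by an extremal choice — a central module of a spanning tree of $\mathcal{I}$, or the midpoint of a longest shortest path — and repeatedly apply Lemma~\ref{lemma:module_paths}(1) and~(2) to show the forced comparabilities accumulate at that single module until it is comparable to every other one, giving conclusion~(2).

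The crux, and where I expect the real work to lie, is controlling long inconsistency paths in this last case: the two parts of Lemma~\ref{lemma:module_paths} are local and settle paths of length at most three, but for longer paths one must prevent the forced comparabilities from merely spreading out. I would attempt this either by induction on $k$, deleting a leaf module $M_\ell$ of a spanning tree of $\mathcal{I}$, applying the statement to the poset obtained by removing $M_\ell$, and lifting the resulting dominating module back (the delicate point being to re-establish comparability with the deleted $M_\ell$), or by exploiting the \emph{betweenness} structure that inconsistency imposes: a direct calculation shows that $M_a\leftrightarrow M_b\leftrightarrow M_c$ with $M_a\parallel M_c$ forces every element of $M_b$ to lie either above all of $M_a\cup M_c$ or below all of it, and I would propagate this splitting along the path by transitivity to pin down a single module comparable to everything.
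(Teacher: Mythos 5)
Your first half is correct and coincides with the paper's argument: the connected components of the inconsistency graph $\mathcal{I}$ are exactly the equivalence classes the paper generates from $\leftrightarrow$, and your transport of consistent comparabilities across inconsistent edges via Lemma~\ref{lemma:module_paths}(1) is precisely how the paper shows each merged class is a module of $\CG(\cP)$. The bookkeeping $1<t<k$ when some nontrivial component is proper is also fine. The problem is the remaining case, where $\mathcal{I}$ is connected and spans all $k$ modules: there you never actually produce the dominating vertex. Your single application of Lemma~\ref{lemma:module_paths}(2) settles a four-module inconsistency path, but for the general case you only list candidate strategies (``I would attempt this either by induction on $k$ \dots or by exploiting the betweenness structure'') and explicitly flag the sticking point --- re-establishing comparability with a deleted module --- without resolving it. That sticking point is the entire content of conclusion~(2), so as written the proof is incomplete.

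For comparison, the paper closes exactly this gap with an induction on the size of the (single) equivalence class: it removes a non-cut vertex $M_{a_1}$ of the induced subgraph of $\CG(\cP)/\cM$ (your spanning-tree leaf is such a vertex, so your setup is compatible), obtains by the inductive hypothesis a module $M_{a_3}$ comparable to everything else, and then argues by contradiction: if $M_{a_3}\parallel M_{a_1}$ and no other module dominates, one can choose $M_{a_2}$ with $M_{a_1}\leftrightarrow M_{a_2}$ and $M_{a_4}\parallel M_{a_2}$, producing the configuration $M_{a_1}\leftrightarrow M_{a_2}- M_{a_3}- M_{a_4}$ with $M_{a_1}\parallel M_{a_3}$, which Lemma~\ref{lemma:module_paths}(2) forbids. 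This is the ``lifting'' step your induction sketch is missing; note that it is not a direct re-attachment of the deleted module but a contradiction argument in which the dominating module of the smaller instance may be replaced by a different one ($M_{a_2}$) in the full instance. Until you supply an argument of this kind, conclusion~(2) is asserted rather than proved.
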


\begin{proof}
Since $\cM$ is not a partition of the poset, there are modules that are inconsistently comparable. The idea of the proof is to aggregate the modules that are inconsistently comparable to each other into single modules. Then show that this is a partition of the poset. Afterwards, we show that if condition 2) does not hold, this coarser partition has more than one module.

Let ${\sim}$ be the equivalence relation on $\{1, 2, \ldots, k\}$ generated by inconsistent comparability, i.e. 
the smallest equivalence relation such that
\[i\sim j \quad\quad \text{if} \quad\quad M_i\leftrightarrow M_j,\] 
and we denote by $K= \{1, \ldots, k\}/\sim$ the set of equivalence classes. 
We define a coarser partition 

\[\widehat{\cM}:=\lbrace\widehat{M}_{[i]}\rbrace_{[i]\in K}, 
\quad\quad \text{where}\quad\quad
\widehat{M}_{[i]}=\bigcup_{x\in [i]} M_x.
\]

First we show that $\widehat{\cM}$ is a modular partition of $\CG(\cP)$.
Consider $x\in [i]$ and $y\not\in [i]$ such that~$M_x-M_y$ are comparable.
Since $x$ and $y$ are not in the same equivalence class, then $M_x$ and~$M_y$ must be consistently comparable; without loss of generality suppose $M_x<M_y$.
We need to show~$M_z-M_y$ for all $z\in [i]$.
By definition, for any such $z$ there must be a chain of inconsistently comparable modules 
\[M_z\leftrightarrow M_{z_1} \leftrightarrow M_{z_2} \leftrightarrow M_{z_l} \leftrightarrow M_{x} < M_y.\]
By Lemma \ref{lemma:module_paths}(1), it follows that $M_{z_l}-M_y$, and since they are in different equivalence classes they must be consistently comparable. Iterating this argument we get that $M_z-M_y$ and thus $\widehat{M}_{[i]}$ is a module. 

If ${\sim}$ generates more than one equivalence class, then we are in part (1) of the statement.
On the other hand, if there is a single equivalence class, we show that there is a dominating vertex in the skeleton graph.
This is a consequence of the following claim:
 
\begin{claim*}
For every $[i]$, there is an $x\in[i]$ such that $M_x$ is comparable to $M_y$ for every $y\in[i]$. That is, the vertex induced subgraph of $\CG(\cP)/\cM$ on the vertices corresponding to $[i]$ has a dominating vertex in that subgraph. 
\begin{proof}
If $|[i]|\leq 2$ the result is trivial, suppose it is true for $|[i]|<k$ and let $|[i]|=k$.

For a contradiction suppose there is no $x\in [i]$ such that $M_x$ is comparable to every other module in $\widehat{M}_{[i]}$. 
Consider the vertex induced subgraph of the skeleton of the comparability graph $\CG(\cP)/\cM$ induced by the modules of $[i]$, i.e. the graph whose vertices are the elements of $[i]$.

Pick any vertex of this subgraph that is not a cut vertex, that is, a vertex whose removal does not disconnect the subgraph. 
Call this $M_{a_1}$. Since by construction the graph has no self loops and is connected, such a vertex exists \cite[Proposition 1.2.29]{west2001introduction}.
By the inductive hypothesis there is another vertex, we denote $M_{a_3}$, such that $M_{a_3}$ is comparable to every other module.  By construction there should be at least one module in $\widehat{M}_{[i]}$ inconsistently comparable to $M_{a_1}$, it must be different from $M_{a_3}$ otherwise $M_{a_3}$ is comparable to everything.  We denote it $M_{a_2}$.  Finally, there must be a module that is incomparable to $M_{a_2}$, otherwise $M_{a_2}$ is comparable to everything, we denote this module $M_{a_4}$. 
Thus we have constructed a path or cycle
\[M_{a_1}\leftrightarrow M_{a_2}-M_{a_3} - M_{a_4}, \]
where $M_{a_1}$ and $M_{a_3}$ are incomparable and $M_{a_2}$ and $M_{a_4}$ are incomparable.
However, by Lemma~\ref{lemma:module_paths}(2), this is not possible.  Thus, there must be an $x\in[i]$ such that $M_x$ is comparable to every other module in $\widehat{M}_{[i]}$.
\end{proof}
\end{claim*}
So if $\sim$ has a single equivalence class, then there is some module in $\cM$ that is comparable to all other modules.
\end{proof}

\begin{proof}[Proof of Corollary \ref{cor:the_ones}]
Consider a poset $\cP$ and modular partition $\cM=\{M_1,M_2,\ldots,M_{2D+1}\}$ of the incomparability graph, thus also of the comparability graph. If $\cM$ is a poset modular partition, then condition (1) is satisfied and we are done. 

Suppose $\cM$ is not a poset modular partition. Then by Lemma~\ref{prop:pos2mod} either there is a coarser modular partition or a dominating vertex in the skeleton graph $\CG(\cP)/\cM$. This dominating vertex would correspond to an isolated vertex in the skeleton of the incomparability graph $\IG(\cP)/\cM$.  However,  by the conditions of Theorem~\ref{thm:joined}, the skeleton graph $\IG(\cP)/\cM$ is connected, thus it cannot have an isolated vertex.

So by Lemma~\ref{prop:pos2mod}, if $\cM$ is not a poset modular partition, then there is a coarser modular partition of the poset with more than one module. 
However, to obtain a coarser partition one can only join together modules that have the same neighbourhood. 
Otherwise, the coarser partition would not be a modular partition.
If $D>2$ then the only such modules in the construction of Theorem~\ref{thm:joined} are~$M_{2D}$ and $M_{2D+1}$, which are both neighbours of $M_{2D-1}$. Taking the union of $M_{2D}$ and $M_{2D+1}$ we get a partition which does not have a coarser partition and does not have an isolated vertex in the incomparability graph of the skeleton, thus by the contrapositive of Lemma~\ref{prop:pos2mod} the partition is a poset partition.
If $D=2$, then $M_3$ is a neighbour of $M_1,M_2,M_4,M_5$, so we can join these together to get $\widehat{\cM}=\{M_1\cup M_2\cup M_4\cup M_5,M_3\}$, and again by the contrapositive of Lemma~\ref{prop:pos2mod}, $\widehat{\cM}$ is a poset partition.
\end{proof}

\section{Applications of linear extensions}\label{sec:app}
Our main result addresses the question: \textit{How many linear extensions does a poset have?} In this section, we restate this question in equivalent forms 
and discuss how this perspective shift can lead to interesting applications both within mathematics and in the sciences.

\subsection{Equivalent counting problems}

First recall that the problem of counting linear extensions is equivalent to counting permutations with certain constraints.

\begin{definition}
A permutation of length $n$ (or $n$-permutation) is an ordering of the set $[n]$.
Alternatively, an $n$-permutation $\pi=\pi_1\pi_2\ldots \pi_n$, can be thought of as a bijection $\pi:[n]\to[n]$ where $\pi(i)=\pi_i$ for any $1\leq i \leq n$.
For the $n$-permutation $\pi$ above, we say $\pi_i$ is to the left of $\pi_j$ (and $\pi_j$ is to the right of $\pi_i$),  if $i<j$.
Finally, we denote by $\Sigma_n$ the set of permutations on $[n]$.
\end{definition}

Recall that an inversion of an $n$-permutation $\pi$ is  pair of elements that occur outside of their ``natural order''.  More precisely:
  
\begin{definition}
  An \emph{inversion} in a permutation $\pi=\pi_1\dots\pi_n$ is a pair $(\pi_i,\pi_j)$ with $i<j$ and $\pi_i>\pi_j$.
  The \emph{inversion set} $I(\pi)$ is the set of all inversions in $\pi$.
 \end{definition}
 \begin{example}
  Consider the permutation $\pi=13425$, the inversions in $\pi$ are $(3,2)$ and $(4,2)$, so~$I(\pi)=\{(3,2),(4,2)\}$.
 \end{example}

\begin{remark}
A linear extension of a poset $\cP=([n], \leq)$, say $\ell:[n]\to \bn$, defines a permutation $\pi_\ell := \ell(1)\ell(2)\ldots \ell(n)$. Since $\ell$ is a linear extension, if $(i,j)$ is an inversion of $\pi_\ell$, then $i$ and $j$ must be incomparable in $\cP$.  Thus, we have that counting the number of linear extensions $\lex(\cP)$ is equivalent to counting the number of $n$-permutations whose inversion set is a subset of the set of pairs of incomparable elements of $\cP$,  that is, 
\[\lex(\cP) = |\left\{\pi\in\Sigma_n\,|\,I(\pi)\subseteq \{(i,j)\,|\, i, j \text{ incomparable in } \cP\}\right\}|.\]
See Figure~\ref{fig:counting_3way_example}AC for an example.
\end{remark}

This equivalent reformulation of the counting problem has brought forth several applications.  In particular, estimating the complexity of counting linear extensions \cite{BW91, dittmer2018counting} as well as the development of algorithmic methods to uniformly sample linear extensions in an efficient way \cite{PR94,huber2025generating} and to estimate the number of linear extensions \cite{banks2010using}. Moreover, in the other direction linear extensions have been used to understand permutation statistics \cite{BjWa91} and count permutation patterns and related variants \cite{EN12,yakoubov2015pattern,cooper2016complexity}.

One can also reformulate this counting problem in the setting of digraphs. To do this, we recall the notion of an acyclic tournament, which is a special kind of transitive graph.

\begin{definition}[Tournaments]
A \emph{tournament} is a directed graph with exactly one directed edge between every pair of vertices. A tournament with $n$ vertices is called an \emph{$n$-tournament}. 
We denote by $T_n$ the \emph{transitive} $n$-tournament with edges $i\to j$ if and only if $i<j$.  See Figure \ref{fig:3_ways_basics} for examples.
\end{definition}

\begin{remark}
A tournament is transitive if and only if it is acyclic.  In fact, in some literature the definition of a transitive tournament, is a tournament that is acyclic.
Moreover, an acyclic tournament on $n$ vertices is also referred to as a fully directed $n$-clique, as a directed $(n-1)$-simplex, or a simplex of dimension $n-1$.
\end{remark}

\begin{figure}[h!]
\begin{center}
\begin{tikzpicture}[scale=1]
  \node (label1) at (-1,2.25){\textbf{A}};
  \node[circle, draw=black, scale=0.5] (1a) at (0,0){$1$};
  \node[circle, draw=black, scale=0.5] (2a) at (1,0){$2$};
  \draw[thick,->-] (1a) to[bend left] (2a);
  \draw[thick,->-] (2a) to[bend left] (1a);
  \node (label2) at (.5,1){\small $C_2(\emptyset)=1$};
  \node[circle, draw=black, scale=0.5] (1b) at (0,1.5){$1$};
  \node[circle, draw=black, scale=0.5] (2b) at (1,1.5){$2$};
  \draw[thick,->-] (1b) to (2b);
  \node (label3) at (.5,-.5){\small $C_2(R_2)=2$};
  
  \def\x{4}  
  \node (label4) at (-1+\x,2.25){\textbf{B}};
  \node[circle, draw=black, scale=0.5] (1c) at (0+\x,1.5){$1$};
  \node[circle, draw=black, scale=0.5] (2c) at (1+\x,1.5){$2$};
  \node[circle, draw=black, scale=0.5] (3c) at (0+\x,.5){$3$};
  \draw[thick,->-] (1c) to (2c);
  \draw[thick,->-] (2c) to[bend right] (1c);
  \draw[thick,->-] (1c) to (3c);
  \draw[thick,->-] (2c) to (3c);
  \node (label5) at (.5+\x,0){\small $C_3(R)=2$};
    \node[circle, draw=black, scale=0.5] (1d) at (.5+\x,-2){$1$};
  \node[circle, draw=black, scale=0.5] (2d) at (0+\x,-1){$2$};
  \node[circle, draw=black, scale=0.5] (3d) at (1+\x,-1){$3$};
  \draw[thick] (3d) -- (1d) -- (2d);
  \node (label6) at (.5+\x,-2.5){\small $P_3(R)$};
  \node (label6a) at (.5+\x,2.25){\small $R=\{(2,1)\}$};
  \node[circle, draw=black, scale=0.5] (1e) at (3+\x,1.5){$1$};
  \node[circle, draw=black, scale=0.5] (2e) at (4+\x,1.5){$2$};
  \node[circle, draw=black, scale=0.5] (3e) at (3+\x,.5){$3$};
  \draw[thick,->-] (1e) to (2e);
  \draw[thick,->-] (3e) to[bend left] (1e);
  \draw[thick,->-] (1e) to (3e);
  \draw[thick,->-] (2e) to (3e);
  \node (label7) at (3.5+\x,0){\small $C_3(R)=1$};
  \node[circle, draw=black, scale=0.5] (1f) at (3.5+\x,-2){$1$};
  \node[circle, draw=black, scale=0.5] (2f) at (3.5+\x,-1.5){$2$};
  \node[circle, draw=black, scale=0.5] (3f) at (3.5+\x,-1){$3$};
  \draw[thick] (1f) -- (2f) -- (3f);
  \node (label8) at (3.5+\x,-2.5){\small $P_3(R)$};
  \node (label8a) at (3.5+\x,2.25){\small $R=\{(3,1)\}$};
  
  \def\x{11}  
  \node (label9) at (-1+\x,2.25){\textbf{C}};
  \node[circle, draw=black, scale=0.5] (1g) at (-.25+\x,1.75){$1$};
  \node[circle, draw=black, scale=0.5] (2g) at (1+\x,1.75){$2$};
  \node[circle, draw=black, scale=0.5] (3g) at (-.25+\x,.5){$3$};
  \node[circle, draw=black, scale=0.5] (4g) at (1+\x,.5){$4$};
  \draw[->-] (1g) to (2g);
  \draw[line width=1.25pt,->-,Red] (1g) to (3g);
  \draw[->-] (1g) to (4g); 
  \draw[line width=1.25pt,->-,Red] (3g) to (2g);
  \draw[line width=1.25pt,->-,Red] (2g) to (4g);
  \draw[->-] (3g) to (4g);
  \node (label10) at (.375+\x,2.25){\small $T$};
  \node (label11) at (.375+\x,-.5){\small $\phi(T)=1342$};
\end{tikzpicture}
\end{center}
\caption{\textbf{A:} The transitive tournament $T_2$ and $T_2$ with a reverse edge added, and the values of $C_2$ the number edge-induced transitive 2-tournaments in each.
\textbf{B:} The transitive tournament $T_3$ with two different reverse edges added, with the corresponding values of $C_3(R)$ and posets $P_3(R)$ beneath.
\textbf{C:} The Hamiltonian path through the transitive tournament $T$ and corresponding permutation $\phi(T)$.  }
\label{fig:3_ways_basics}
\end{figure}
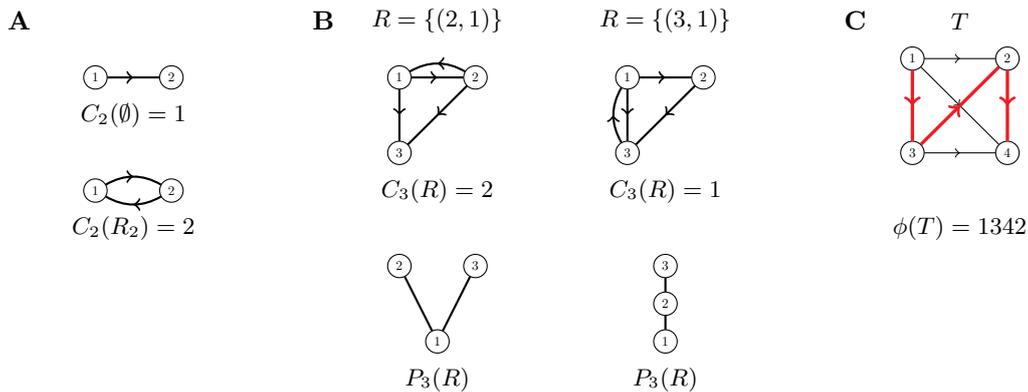


The tournament $T_n$ contains exactly one edge-induced transitive $n$-tournament, and $\binom{n}{k}$ edge-induced $k$-tournaments, since every edge-induced $k$-tournament in $T_n$ is equivalent to a vertex-induced subgraph of $T_n$.
However, if we take a directed graph with bidirectional connections then it is not sufficient to look only at the vertices or even at the number vertices and edges between them.  See Figure \ref{fig:3_ways_basics}AB for examples.

A natural question to ask is given the transitive tournament $T_n$ together with a set $R$ of reverse edges added to $T_n$, how many edge-induced transitive $n$-tournaments does the resulting graph contain?
We show that this can be equivalently formulated as counting the number of linear extensions of a certain subposet of $\bn$.
We start by formulating precisely this question in the digraph setting. 

\begin{definition}\label{def:reverse_edge_graph}
We denote by $R_n=\{(i,j)\in[n]\times[n]\,\mid \,i>j\}$, the set of \emph{all possible reverse edges} that can be added to $T_n$.
Given a chosen set $R\subseteq R_n$ of reverse edges we define $A_n(R)$ to be the graph with vertex set $[n]$ and edge set $E(T_n)\cup R$, that is, the graph obtained from $T_n$ by adding in the edges from $R$.
We denote by $\hat{C}_n(G)$ the number of edge-induced transitive $n$-tournaments in a graph $G$, and for convenience we define $C_n(R)\coloneqq\hat{C}_n(A_n(R))$. 
\end{definition}
 
So the question we aim to answer can be formulated as:
\begin{question}\label{que:1}
Given $n\in\mathbb{N}$ and a set of reverse edges $R$, can we find a formula for $C_n(R)$?
\end{question}

We now define a poset that asks this question in terms of linear extensions. 

\begin{definition}
Given $n\in\mathbb{N}$ and a set of reverse edges $R$, let $\cP_n(R)=([n], \leq_{\cP})$ be the poset with the order relation generated by 
\[i<_{\cP} j \text{ if } i<j \text{ and } (j,i)\notin R.\]
See Figure \ref{fig:3_ways_basics}B for examples. Note that the poset generated by a relation is equivalent to taking the transitive closure of that relation.
\end{definition}

\begin{proposition}\label{prop:posets_and_graphs}
The number of linear extensions of $\cP_n(R)$ is the number of edge-induced tournaments of $A_n(R)$.  That is 
\[\lex(\cP_n(R)) = C_n(R).\]
\end{proposition}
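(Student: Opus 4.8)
The plan is to exhibit an explicit bijection between the linear extensions of $\cP_n(R)$ and the edge-induced transitive $n$-tournaments of $A_n(R)$, from which the equality of counts follows immediately. The key observation is that a transitive tournament on the vertex set $[n]$ carries exactly the data of a total order on $[n]$: given a total order $\prec$, the tournament $H_\prec$ with an edge $a\to b$ precisely when $a\prec b$ is transitive and has exactly one edge between each pair, and conversely every transitive $n$-tournament arises this way from its unique Hamiltonian order (the map $\phi$ illustrated in Figure~\ref{fig:3_ways_basics}C).

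First I would pin down what an edge-induced transitive $n$-tournament of $A_n(R)$ actually is. Since such a subgraph must be a tournament on all $n$ vertices, its edge set must contain exactly one of the two possible directed edges for each pair $\{i,j\}$ (and hence spans every vertex once $n\ge 2$); ruling out keeping both edges of a bidirectional pair as well as dropping a pair entirely. So these subgraphs are in bijection with the total orders $\prec$ on $[n]$ for which $H_\prec$ is a subgraph of $A_n(R)$. Recalling that $A_n(R)$ contains $i\to j$ for every $i<j$ and additionally $j\to i$ exactly when $(j,i)\in R$, the condition that $H_\prec$ is a subgraph of $A_n(R)$ translates, pair by pair, into: for every $i<j$, if $j\prec i$ then $(j,i)\in R$; equivalently, if $(j,i)\notin R$ then $i\prec j$.

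The next step is to recognise this last condition as exactly the statement that $\prec$ is a linear extension of $\cP_n(R)$. By definition $\leq_\cP$ is generated by the relations $i<_\cP j$ for $i<j$ with $(j,i)\notin R$, and a total order extends $\leq_\cP$ if and only if it respects these generating relations; here one uses that a total order is transitive, so respecting the generators is equivalent to respecting their transitive closure, which is precisely $\leq_\cP$. Thus $\prec$ is a linear extension of $\cP_n(R)$ iff $i\prec j$ whenever $i<j$ and $(j,i)\notin R$, matching the condition derived in the previous step verbatim. Composing the two correspondences yields a bijection between linear extensions of $\cP_n(R)$ and edge-induced transitive $n$-tournaments of $A_n(R)$, proving $\lex(\cP_n(R))=C_n(R)$.

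I expect the main obstacle to be bookkeeping rather than conceptual. The two points that need care are: that \emph{edge-induced} forces exactly one edge per pair, so that the chosen subgraph is genuinely a spanning transitive tournament; and the generators-versus-transitive-closure step, so that no constraint is silently lost or spuriously added when passing between $\cP_n(R)$ and its defining relation. Once these are nailed down, the bijection is transparent, and it also recovers the permutation reformulation recorded in the earlier remark, since reading off $H_\prec$ along its Hamiltonian path returns the permutation whose inversions are confined to the incomparable pairs of $\cP_n(R)$.
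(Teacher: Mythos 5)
Your proposal is correct and takes essentially the same route as the paper: both identify edge-induced transitive $n$-tournaments of $A_n(R)$ with total orders on $[n]$ via the unique Hamiltonian path (the bijection $\phi$), and then match the subgraph condition against the linear-extension condition pair by pair. Your handling of the transitive closure is slightly cleaner — you invoke once that a total order extends $\leq_{\cP}$ iff it respects the generating relations, where the paper instead argues directly via a chain of unidirectional edges and acyclicity — but the substance is the same.
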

See Figure~\ref{fig:counting_3way_example}AB for an example of Proposition~\ref{prop:posets_and_graphs}. 
To show this we briefly recall the following graph theoretic result.

\begin{lemma_definition}\label{lem:HamPath}\cite[Theorem 1.5.2]{Iyer16}
A path in a graph $G$ is \emph{Hamiltonian} if it visits each vertex of $G$ exactly once.
A tournament has a unique Hamiltonian path if and only if the tournament is transitive.
This defines a bijection $\phi$ 
\[\begin{array}{c c c l}
\phi: & \left\{  \begin{array}{c}
						\text{Transitive tournaments}\\ 
						\text{with vertex set }[n] 
						\end{array}\right\}							& \to 			& \Sigma_n: \\
						&&&\\
        & T                                                      & \mapsto 	& \pi_T:[n]\to[n]
\end{array}\]
where  $\pi_T(1) \to \pi_T(2) \to \cdots \to \pi_T(n)$ is the unique Hamiltonian path of $T$.  See Figure \ref{fig:3_ways_basics}C for an example.
\end{lemma_definition}

Note that indeed $\phi$ is a bijection, with inverse $\phi^{-1}(\pi)$ the tournament with edges $(i,j)$, whenever~$\pi_i$ is left of $\pi_j$. Thus, transitive tournaments can equivalently  be thought of as $n$-permutations or totally ordered posets on the set $[n]$.
Notice moreover, that under this equivalence $T_n$ corresponds to the identity permutation $\pi = 12\ldots n$ and the poset $\bn$ respectively.

\begin{proof}[Proof of Proposition \ref{prop:posets_and_graphs}]

We show this by defining a bijection $\xi$ 
\[\begin{array}{c c c c}
\xi: & \left\{  \begin{array}{c}
						T \subseteq A_n(R)\\
						T \text{ is an edge induced transitive tournament} 
						\end{array}
		\right\}							
		& \to 			& 
		\left\{  \begin{array}{c}
						\ell:\cP_n(R)\to [n]\\
						\ell \text{ is a linear extension} 
						\end{array}
		\right\}:						 \\
						&&&\\
        & T & \mapsto 	& \xi(T) = \pi_T,
\end{array}\]
where $\pi_T$ is the permutation defined in Lemma \ref{lem:HamPath}.

We show first that $\pi_T$ is a linear extension by showing that if $i<_{\cP}j$, then there is an edge $i\to j$ in T and thus $\pi_T(i)< \pi_T(j)$ in $\bn$.
To see this, note first that if two vertices in $A_n(R)$ are connected by a unidirectional edge, then that edge must also be in $T$. 
Then note that if $i<_{\cP}j$ then either: $i<j$ in $[n]$ and $(j,i)\notin R$ or $i<_{\cP}j$ is generated by transitivity.  In the first case, $i$ and $j$ are connected by unidirectional edges, thus $i\to j$ must be an edge in $T$.
In the second case, there must be a chain of unidirectional edges in $A_n(R)$ 
\[i\to x_1 \to \ldots \to x_k \to j.\]
Then, $j\to i$ cannot be in $T$ since that will create a loop and $T$ is acyclic. Thus, $i\to j$ must be in $T$ instead. This shows $\xi(T)$ is a linear extension.
 
To see $\xi$  is bijective, consider the function that maps a linear extension $\ell$ to the tournament $T_\ell$ with Hamiltonian path $\ell(1) \to \ell(2) \to \cdots \to \ell(n)$.
Notice that if $i\to j$ is in $T_\ell$ then either $i<_\cP j$ or $i$ and $j$ are incomparable in $\cP_n(R)$.
In either case, $i\to j$ must be an edge in $A_n(R)$.  Thus, $T_\ell \subseteq A_n(R)$ and this function defines an inverse of $\xi$.  
\end{proof}
 
\begin{remark}
Note that equivalently, $C_n(R)$ can be counted as the number of $n$-permutations whose inversion set is contained in $R$.
Moreover, $C_n(R)$ can also be computed by counting the number of linear extensions of the poset defined by the transitive closure of the complement of $A_n(R)$, since that poset is the dual of $\cP_n(R)$ and thus has the same incomparability graph. 
\end{remark}

\begin{remark}
In \cite{Sid91} they introduce the notion of a flow on a network (later called Siderenko flows \cite{Cha23}), and show that the total flow of the Hasse diagram of a poset $P$ is equal to the number of linear extensions of $P$. They also introduce a function $\mu$ defined recursively on the cliques of an undirected graph, such that $\mu$ of the incomparability graph $P$ is equal to the number of linear extensions on $P$. This is another example of a property of graphs that is equivalent to counting linear extensions of poset, but distinct from that explored here.
\end{remark}
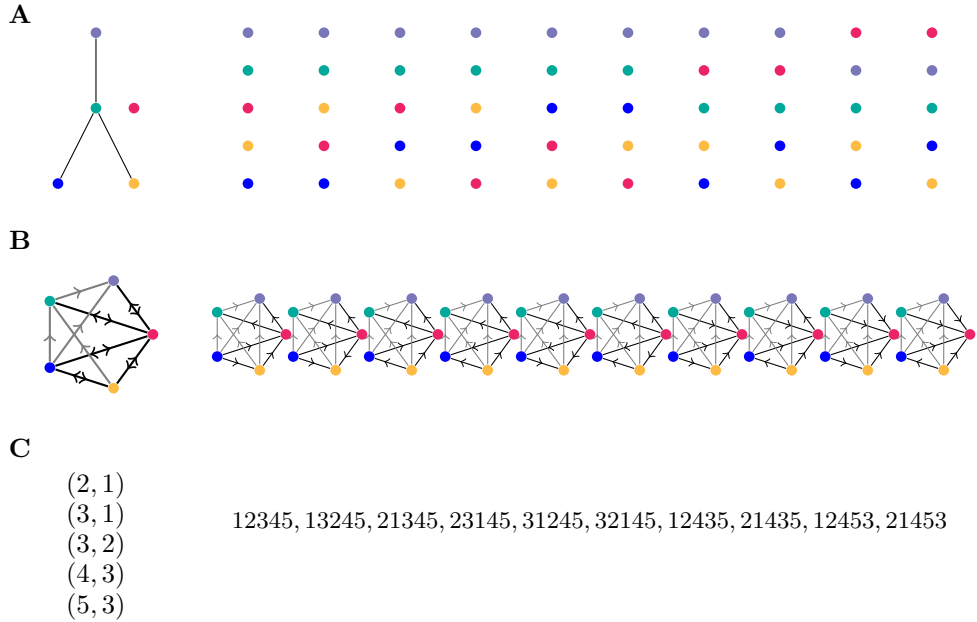
\begin{figure}[ht!]
\centering
\begin{tikzpicture}[scale=1]
 \def\x{-2}
   \node (label1) at (\x+-1,2.25){\textbf{A}};
   \node[circle, fill=black, radius=3pt, inner sep=0pt, minimum size=4pt, fill=blue] (1) at (-0.5+\x,0) {};
  \node[circle, fill=black, radius=3pt, inner sep=0pt, minimum size=4pt, fill=Dandelion] (2) at (0.5+\x,0) {};
  \node[circle, fill=black, radius=3pt, inner sep=0pt, minimum size=4pt, fill=WildStrawberry] (3) at (.5+\x, 1) {};
  \node[circle, fill=black, radius=3pt, inner sep=0pt, minimum size=4pt, fill=JungleGreen] (4) at (0+\x, 1) {};
  \node[circle, fill=black, radius=3pt, inner sep=0pt, minimum size=4pt, fill=Periwinkle] (5) at (0+\x, 2) {};
  \draw (5) -- (4) -- (2);
  \draw (4) -- (1);
 \def\ya{0}\def\yb{0.5}\def\yc{1}\def\yd{1.5}\def\ye{2}
   \def\x{0}
  \node[circle, fill=black, radius=3pt, inner sep=0pt, minimum size=4pt, fill=blue] (1) at (\x,\ya) {};
  \node[circle, fill=black, radius=3pt, inner sep=0pt, minimum size=4pt, fill=Dandelion] (2) at (\x,\yb) {};
  \node[circle, fill=black, radius=3pt, inner sep=0pt, minimum size=4pt, fill=WildStrawberry] (3) at (\x, \yc) {};
  \node[circle, fill=black, radius=3pt, inner sep=0pt, minimum size=4pt, fill=JungleGreen] (4) at (\x, \yd) {};
  \node[circle, fill=black, radius=3pt, inner sep=0pt, minimum size=4pt, fill=Periwinkle] (5) at (\x, \ye) {};
  \def\x{1}
  \node[circle, fill=black, radius=3pt, inner sep=0pt, minimum size=4pt, fill=blue] (1) at (\x,\ya) {};
  \node[circle, fill=black, radius=3pt, inner sep=0pt, minimum size=4pt, fill=Dandelion] (2) at (\x,\yc) {};
  \node[circle, fill=black, radius=3pt, inner sep=0pt, minimum size=4pt, fill=WildStrawberry] (3) at (\x, \yb) {};
  \node[circle, fill=black, radius=3pt, inner sep=0pt, minimum size=4pt, fill=JungleGreen] (4) at (\x, \yd) {};
  \node[circle, fill=black, radius=3pt, inner sep=0pt, minimum size=4pt, fill=Periwinkle] (5) at (\x, \ye) {};
  \def\x{2}
  \node[circle, fill=black, radius=3pt, inner sep=0pt, minimum size=4pt, fill=blue] (1) at (\x,\yb) {};
  \node[circle, fill=black, radius=3pt, inner sep=0pt, minimum size=4pt, fill=Dandelion] (2) at (\x,\ya) {};
  \node[circle, fill=black, radius=3pt, inner sep=0pt, minimum size=4pt, fill=WildStrawberry] (3) at (\x, \yc) {};
  \node[circle, fill=black, radius=3pt, inner sep=0pt, minimum size=4pt, fill=JungleGreen] (4) at (\x, \yd) {};
  \node[circle, fill=black, radius=3pt, inner sep=0pt, minimum size=4pt, fill=Periwinkle] (5) at (\x, \ye) {};
  \def\x{3}
  \node[circle, fill=black, radius=3pt, inner sep=0pt, minimum size=4pt, fill=blue] (1) at (\x,\yb) {};
  \node[circle, fill=black, radius=3pt, inner sep=0pt, minimum size=4pt, fill=Dandelion] (2) at (\x,\yc) {};
  \node[circle, fill=black, radius=3pt, inner sep=0pt, minimum size=4pt, fill=WildStrawberry] (3) at (\x, \ya) {};
  \node[circle, fill=black, radius=3pt, inner sep=0pt, minimum size=4pt, fill=JungleGreen] (4) at (\x, \yd) {};
  \node[circle, fill=black, radius=3pt, inner sep=0pt, minimum size=4pt, fill=Periwinkle] (5) at (\x, \ye) {};
  \def\x{4}
  \node[circle, fill=black, radius=3pt, inner sep=0pt, minimum size=4pt, fill=blue] (1) at (\x,\yc) {};
  \node[circle, fill=black, radius=3pt, inner sep=0pt, minimum size=4pt, fill=Dandelion] (2) at (\x,\ya) {};
  \node[circle, fill=black, radius=3pt, inner sep=0pt, minimum size=4pt, fill=WildStrawberry] (3) at (\x, \yb) {};
  \node[circle, fill=black, radius=3pt, inner sep=0pt, minimum size=4pt, fill=JungleGreen] (4) at (\x, \yd) {};
  \node[circle, fill=black, radius=3pt, inner sep=0pt, minimum size=4pt, fill=Periwinkle] (5) at (\x, \ye) {};
  \def\x{5}
  \node[circle, fill=black, radius=3pt, inner sep=0pt, minimum size=4pt, fill=blue] (1) at (\x,\yc) {};
  \node[circle, fill=black, radius=3pt, inner sep=0pt, minimum size=4pt, fill=Dandelion] (2) at (\x,\yb) {};
  \node[circle, fill=black, radius=3pt, inner sep=0pt, minimum size=4pt, fill=WildStrawberry] (3) at (\x, \ya) {};
  \node[circle, fill=black, radius=3pt, inner sep=0pt, minimum size=4pt, fill=JungleGreen] (4) at (\x, \yd) {};
  \node[circle, fill=black, radius=3pt, inner sep=0pt, minimum size=4pt, fill=Periwinkle] (5) at (\x, \ye) {};  
  \def\x{6}
  \node[circle, fill=black, radius=3pt, inner sep=0pt, minimum size=4pt, fill=blue] (1) at (\x,\ya) {};
  \node[circle, fill=black, radius=3pt, inner sep=0pt, minimum size=4pt, fill=Dandelion] (2) at (\x,\yb) {};
  \node[circle, fill=black, radius=3pt, inner sep=0pt, minimum size=4pt, fill=WildStrawberry] (3) at (\x, \yd) {};
  \node[circle, fill=black, radius=3pt, inner sep=0pt, minimum size=4pt, fill=JungleGreen] (4) at (\x, \yc) {};
  \node[circle, fill=black, radius=3pt, inner sep=0pt, minimum size=4pt, fill=Periwinkle] (5) at (\x, \ye) {};
   \def\x{7}
  \node[circle, fill=black, radius=3pt, inner sep=0pt, minimum size=4pt, fill=blue] (1) at (\x,\yb) {};
  \node[circle, fill=black, radius=3pt, inner sep=0pt, minimum size=4pt, fill=Dandelion] (2) at (\x,\ya) {};
  \node[circle, fill=black, radius=3pt, inner sep=0pt, minimum size=4pt, fill=WildStrawberry] (3) at (\x, \yd) {};
  \node[circle, fill=black, radius=3pt, inner sep=0pt, minimum size=4pt, fill=JungleGreen] (4) at (\x, \yc) {};
  \node[circle, fill=black, radius=3pt, inner sep=0pt, minimum size=4pt, fill=Periwinkle] (5) at (\x, \ye) {};
   \def\x{8}
  \node[circle, fill=black, radius=3pt, inner sep=0pt, minimum size=4pt, fill=blue] (1) at (\x,\ya) {};
  \node[circle, fill=black, radius=3pt, inner sep=0pt, minimum size=4pt, fill=Dandelion] (2) at (\x,\yb) {};
  \node[circle, fill=black, radius=3pt, inner sep=0pt, minimum size=4pt, fill=WildStrawberry] (3) at (\x, \ye) {};
  \node[circle, fill=black, radius=3pt, inner sep=0pt, minimum size=4pt, fill=JungleGreen] (4) at (\x, \yc) {};
  \node[circle, fill=black, radius=3pt, inner sep=0pt, minimum size=4pt, fill=Periwinkle] (5) at (\x, \yd) {};
  \def\x{9}
  \node[circle, fill=black, radius=3pt, inner sep=0pt, minimum size=4pt, fill=blue] (1) at (\x,\yb) {};
  \node[circle, fill=black, radius=3pt, inner sep=0pt, minimum size=4pt, fill=Dandelion] (2) at (\x,\ya) {};
  \node[circle, fill=black, radius=3pt, inner sep=0pt, minimum size=4pt, fill=WildStrawberry] (3) at (\x, \ye) {};
  \node[circle, fill=black, radius=3pt, inner sep=0pt, minimum size=4pt, fill=JungleGreen] (4) at (\x, \yc) {};
  \node[circle, fill=black, radius=3pt, inner sep=0pt, minimum size=4pt, fill=Periwinkle] (5) at (\x, \yd) {};
  \def\x{5}
    \begin{scope}[shift={(-2,-2)}]
  \node (label2) at (-1,1.25){\textbf{B}};
  \node[circle, fill=black, radius=3pt, inner sep=0pt, minimum size=4pt,fill=WildStrawberry] (g3) at (0:.75cm){};
  \node[circle, fill=black, radius=3pt, inner sep=0pt, minimum size=4pt,fill=Periwinkle] (g5) at (72:.75cm){};
  \node[circle, fill=black, radius=3pt, inner sep=0pt, minimum size=4pt,fill=JungleGreen] (g4) at (144:.75cm){};
  \node[circle, fill=black, radius=3pt, inner sep=0pt, minimum size=4pt,fill=blue] (g1) at (216:.75cm){};
  \node[circle, fill=black, radius=3pt, inner sep=0pt, minimum size=4pt,fill=Dandelion] (g2) at (288:.75cm){};
  \draw[thick,-<>-] (g1) to (g2);
  \draw[thick,-<>-] (g2) to (g3);
  \draw[thick,-<>-] (g3) to (g4);
  \draw[thick,-<>-] (g1) to (g3);
  \draw[thick,-<>-] (g3) to (g5);
  \draw[thick,->-,gray] (g1) to (g4);
  \draw[thick,->-,gray] (g1) to (g5);
  \draw[thick,->-,gray] (g2) to (g4);
  \draw[thick,->-,gray] (g4) to (g5);
  \end{scope}
  \begin{scope}[shift={(0,-2)}]
  \node[circle, fill=black, radius=3pt, inner sep=0pt, minimum size=4pt,fill=WildStrawberry] (g3) at (0:.5cm){};
  \node[circle, fill=black, radius=3pt, inner sep=0pt, minimum size=4pt,fill=Periwinkle] (g5) at (72:.5cm){};
  \node[circle, fill=black, radius=3pt, inner sep=0pt, minimum size=4pt,fill=JungleGreen] (g4) at (144:.5cm){};
  \node[circle, fill=black, radius=3pt, inner sep=0pt, minimum size=4pt,fill=blue] (g1) at (216:.5cm){};
  \node[circle, fill=black, radius=3pt, inner sep=0pt, minimum size=4pt,fill=Dandelion] (g2) at (288:.5cm){};
  \draw[->-] (g1) to (g2);
  \draw[->-] (g2) to (g3);
  \draw[->-] (g3) to (g4);
  \draw[->-,gray] (g4) to (g5);
  \draw[->-] (g1) to (g3);
  \draw[->-] (g3) to (g5);
  \draw[->-,gray] (g1) to (g4);
  \draw[->-,gray] (g1) to (g5);
  \draw[->-,gray] (g2) to (g4);
  \draw[->-,gray] (g2) to (g5);
  \end{scope}
  \begin{scope}[shift={(1,-2)}]
  \node[circle, fill=black, radius=3pt, inner sep=0pt, minimum size=4pt,fill=WildStrawberry] (g3) at (0:.5cm){};
  \node[circle, fill=black, radius=3pt, inner sep=0pt, minimum size=4pt,fill=Periwinkle] (g5) at (72:.5cm){};
  \node[circle, fill=black, radius=3pt, inner sep=0pt, minimum size=4pt,fill=JungleGreen] (g4) at (144:.5cm){};
  \node[circle, fill=black, radius=3pt, inner sep=0pt, minimum size=4pt,fill=blue] (g1) at (216:.5cm){};
  \node[circle, fill=black, radius=3pt, inner sep=0pt, minimum size=4pt,fill=Dandelion] (g2) at (288:.5cm){};
  \draw[->-] (g1) to (g2);
  \draw[-<-] (g2) to (g3);
  \draw[->-] (g3) to (g4);
  \draw[->-,gray] (g4) to (g5);
  \draw[->-] (g1) to (g3);
  \draw[->-] (g3) to (g5);
  \draw[->-,gray] (g1) to (g4);
  \draw[->-,gray] (g1) to (g5);
  \draw[->-,gray] (g2) to (g4);
  \draw[->-,gray] (g2) to (g5);
  \end{scope}
    \begin{scope}[shift={(2,-2)}]
  \node[circle, fill=black, radius=3pt, inner sep=0pt, minimum size=4pt,fill=WildStrawberry] (g3) at (0:.5cm){};
  \node[circle, fill=black, radius=3pt, inner sep=0pt, minimum size=4pt,fill=Periwinkle] (g5) at (72:.5cm){};
  \node[circle, fill=black, radius=3pt, inner sep=0pt, minimum size=4pt,fill=JungleGreen] (g4) at (144:.5cm){};
  \node[circle, fill=black, radius=3pt, inner sep=0pt, minimum size=4pt,fill=blue] (g1) at (216:.5cm){};
  \node[circle, fill=black, radius=3pt, inner sep=0pt, minimum size=4pt,fill=Dandelion] (g2) at (288:.5cm){};
  \draw[-<-] (g1) to (g2);
  \draw[->-] (g2) to (g3);
  \draw[->-] (g3) to (g4);
  \draw[->-,gray] (g4) to (g5);
  \draw[->-] (g1) to (g3);
  \draw[->-] (g3) to (g5);
  \draw[->-,gray] (g1) to (g4);
  \draw[->-,gray] (g1) to (g5);
  \draw[->-,gray] (g2) to (g4);
  \draw[->-,gray] (g2) to (g5);
  \end{scope}
    \begin{scope}[shift={(3,-2)}]
  \node[circle, fill=black, radius=3pt, inner sep=0pt, minimum size=4pt,fill=WildStrawberry] (g3) at (0:.5cm){};
  \node[circle, fill=black, radius=3pt, inner sep=0pt, minimum size=4pt,fill=Periwinkle] (g5) at (72:.5cm){};
  \node[circle, fill=black, radius=3pt, inner sep=0pt, minimum size=4pt,fill=JungleGreen] (g4) at (144:.5cm){};
  \node[circle, fill=black, radius=3pt, inner sep=0pt, minimum size=4pt,fill=blue] (g1) at (216:.5cm){};
  \node[circle, fill=black, radius=3pt, inner sep=0pt, minimum size=4pt,fill=Dandelion] (g2) at (288:.5cm){};
  \draw[-<-] (g1) to (g2);
  \draw[->-] (g2) to (g3);
  \draw[->-] (g3) to (g4);
  \draw[->-,gray] (g4) to (g5);
  \draw[-<-] (g1) to (g3);
  \draw[->-] (g3) to (g5);
  \draw[->-,gray] (g1) to (g4);
  \draw[->-,gray] (g1) to (g5);
  \draw[->-,gray] (g2) to (g4);
  \draw[->-,gray] (g2) to (g5);
  \end{scope}
    \begin{scope}[shift={(4,-2)}]
  \node[circle, fill=black, radius=3pt, inner sep=0pt, minimum size=4pt,fill=WildStrawberry] (g3) at (0:.5cm){};
  \node[circle, fill=black, radius=3pt, inner sep=0pt, minimum size=4pt,fill=Periwinkle] (g5) at (72:.5cm){};
  \node[circle, fill=black, radius=3pt, inner sep=0pt, minimum size=4pt,fill=JungleGreen] (g4) at (144:.5cm){};
  \node[circle, fill=black, radius=3pt, inner sep=0pt, minimum size=4pt,fill=blue] (g1) at (216:.5cm){};
  \node[circle, fill=black, radius=3pt, inner sep=0pt, minimum size=4pt,fill=Dandelion] (g2) at (288:.5cm){};
  \draw[->-] (g1) to (g2);
  \draw[-<-] (g2) to (g3);
  \draw[->-] (g3) to (g4);
  \draw[->-,gray] (g4) to (g5);
  \draw[-<-] (g1) to (g3);
  \draw[->-] (g3) to (g5);
  \draw[->-,gray] (g1) to (g4);
  \draw[->-,gray] (g1) to (g5);
  \draw[->-,gray] (g2) to (g4);
  \draw[->-,gray] (g2) to (g5);
  \end{scope}
    \begin{scope}[shift={(5,-2)}]
  \node[circle, fill=black, radius=3pt, inner sep=0pt, minimum size=4pt,fill=WildStrawberry] (g3) at (0:.5cm){};
  \node[circle, fill=black, radius=3pt, inner sep=0pt, minimum size=4pt,fill=Periwinkle] (g5) at (72:.5cm){};
  \node[circle, fill=black, radius=3pt, inner sep=0pt, minimum size=4pt,fill=JungleGreen] (g4) at (144:.5cm){};
  \node[circle, fill=black, radius=3pt, inner sep=0pt, minimum size=4pt,fill=blue] (g1) at (216:.5cm){};
  \node[circle, fill=black, radius=3pt, inner sep=0pt, minimum size=4pt,fill=Dandelion] (g2) at (288:.5cm){};
  \draw[-<-] (g1) to (g2);
  \draw[-<-] (g2) to (g3);
  \draw[->-] (g3) to (g4);
  \draw[->-,gray] (g4) to (g5);
  \draw[-<-] (g1) to (g3);
  \draw[->-] (g3) to (g5);
  \draw[->-,gray] (g1) to (g4);
  \draw[->-,gray] (g1) to (g5);
  \draw[->-,gray] (g2) to (g4);
  \draw[->-,gray] (g2) to (g5);
  \end{scope}
    \begin{scope}[shift={(6,-2)}]
  \node[circle, fill=black, radius=3pt, inner sep=0pt, minimum size=4pt,fill=WildStrawberry] (g3) at (0:.5cm){};
  \node[circle, fill=black, radius=3pt, inner sep=0pt, minimum size=4pt,fill=Periwinkle] (g5) at (72:.5cm){};
  \node[circle, fill=black, radius=3pt, inner sep=0pt, minimum size=4pt,fill=JungleGreen] (g4) at (144:.5cm){};
  \node[circle, fill=black, radius=3pt, inner sep=0pt, minimum size=4pt,fill=blue] (g1) at (216:.5cm){};
  \node[circle, fill=black, radius=3pt, inner sep=0pt, minimum size=4pt,fill=Dandelion] (g2) at (288:.5cm){};
  \draw[->-] (g1) to (g2);
  \draw[->-] (g2) to (g3);
  \draw[-<-] (g3) to (g4);
  \draw[->-,gray] (g4) to (g5);
  \draw[->-] (g1) to (g3);
  \draw[->-] (g3) to (g5);
  \draw[->-,gray] (g1) to (g4);
  \draw[->-,gray] (g1) to (g5);
  \draw[->-,gray] (g2) to (g4);
  \draw[->-,gray] (g2) to (g5);
  \end{scope}
    \begin{scope}[shift={(7,-2)}]
  \node[circle, fill=black, radius=3pt, inner sep=0pt, minimum size=4pt,fill=WildStrawberry] (g3) at (0:.5cm){};
  \node[circle, fill=black, radius=3pt, inner sep=0pt, minimum size=4pt,fill=Periwinkle] (g5) at (72:.5cm){};
  \node[circle, fill=black, radius=3pt, inner sep=0pt, minimum size=4pt,fill=JungleGreen] (g4) at (144:.5cm){};
  \node[circle, fill=black, radius=3pt, inner sep=0pt, minimum size=4pt,fill=blue] (g1) at (216:.5cm){};
  \node[circle, fill=black, radius=3pt, inner sep=0pt, minimum size=4pt,fill=Dandelion] (g2) at (288:.5cm){};
  \draw[-<-] (g1) to (g2);
  \draw[->-] (g2) to (g3);
  \draw[-<-] (g3) to (g4);
  \draw[->-,gray] (g4) to (g5);
  \draw[->-] (g1) to (g3);
  \draw[->-] (g3) to (g5);
  \draw[->-,gray] (g1) to (g4);
  \draw[->-,gray] (g1) to (g5);
  \draw[->-,gray] (g2) to (g4);
  \draw[->-,gray] (g2) to (g5);
  \end{scope}
    \begin{scope}[shift={(8,-2)}]
  \node[circle, fill=black, radius=3pt, inner sep=0pt, minimum size=4pt,fill=WildStrawberry] (g3) at (0:.5cm){};
  \node[circle, fill=black, radius=3pt, inner sep=0pt, minimum size=4pt,fill=Periwinkle] (g5) at (72:.5cm){};
  \node[circle, fill=black, radius=3pt, inner sep=0pt, minimum size=4pt,fill=JungleGreen] (g4) at (144:.5cm){};
  \node[circle, fill=black, radius=3pt, inner sep=0pt, minimum size=4pt,fill=blue] (g1) at (216:.5cm){};
  \node[circle, fill=black, radius=3pt, inner sep=0pt, minimum size=4pt,fill=Dandelion] (g2) at (288:.5cm){};
  \draw[->-] (g1) to (g2);
  \draw[->-] (g2) to (g3);
  \draw[-<-] (g3) to (g4);
  \draw[->-,gray] (g4) to (g5);
  \draw[->-] (g1) to (g3);
  \draw[-<-] (g3) to (g5);
  \draw[->-,gray] (g1) to (g4);
  \draw[->-,gray] (g1) to (g5);
  \draw[->-,gray] (g2) to (g4);
  \draw[->-,gray] (g2) to (g5);
  \end{scope}
    \begin{scope}[shift={(9,-2)}]
  \node[circle, fill=black, radius=3pt, inner sep=0pt, minimum size=4pt,fill=WildStrawberry] (g3) at (0:.5cm){};
  \node[circle, fill=black, radius=3pt, inner sep=0pt, minimum size=4pt,fill=Periwinkle] (g5) at (72:.5cm){};
  \node[circle, fill=black, radius=3pt, inner sep=0pt, minimum size=4pt,fill=JungleGreen] (g4) at (144:.5cm){};
  \node[circle, fill=black, radius=3pt, inner sep=0pt, minimum size=4pt,fill=blue] (g1) at (216:.5cm){};
  \node[circle, fill=black, radius=3pt, inner sep=0pt, minimum size=4pt,fill=Dandelion] (g2) at (288:.5cm){};
  \draw[-<-] (g1) to (g2);
  \draw[->-] (g2) to (g3);
  \draw[-<-] (g3) to (g4);
  \draw[->-,gray] (g4) to (g5);
  \draw[->-] (g1) to (g3);
  \draw[-<-] (g3) to (g5);
  \draw[->-,gray] (g1) to (g4);
  \draw[->-,gray] (g1) to (g5);
  \draw[->-,gray] (g2) to (g4);
  \draw[->-,gray] (g2) to (g5);
  \end{scope}
  \def\y{-3}  \def\x{-2}
  \node (label3) at (\x+-1,\y-.5){\textbf{C}};
  \node[fill opacity=0,text opacity=1] (perms) at (\x,-1.0+\y) {$(2,1)$};
  \node[fill opacity=0,text opacity=1] (perms) at (\x,-1.4+\y) {$(3,1)$};
  \node[fill opacity=0,text opacity=1] (perms) at (\x,-1.8+\y) {$(3,2)$};
  \node[fill opacity=0,text opacity=1] (perms) at (\x,-2.2+\y) {$(4,3)$};
  \node[fill opacity=0,text opacity=1] (perms) at (\x,-2.6+\y) {$(5,3)$};
  \node[fill opacity=0,text opacity=1] (perms) at (4.5,-1.5+\y) {\small $12345,13245,21345,23145,31245,32145,12435,21435,12453,21453$};
 \end{tikzpicture}
 \caption{Let $R=\{(2,1),(3,1),(3,2),(4,3),(5,3)\}$. \textbf{A}: The poset $P_5(R)$ along with all its linear extensions. 
 \textbf{B}: The digraph $A_5(R)$ and all edge-induced transitive 5-tournaments it contains.
 \textbf{C}: The set $R$ and all permutations whose inversions set is a subset of $R$. The columns represent the bijection $\phi$ between the top and middle row and $\psi$ between the middle and bottom row.}\label{fig:counting_3way_example}
 \end{figure}
\begin{figure}[h]
\begin{center}
\includegraphics[scale=0.78]{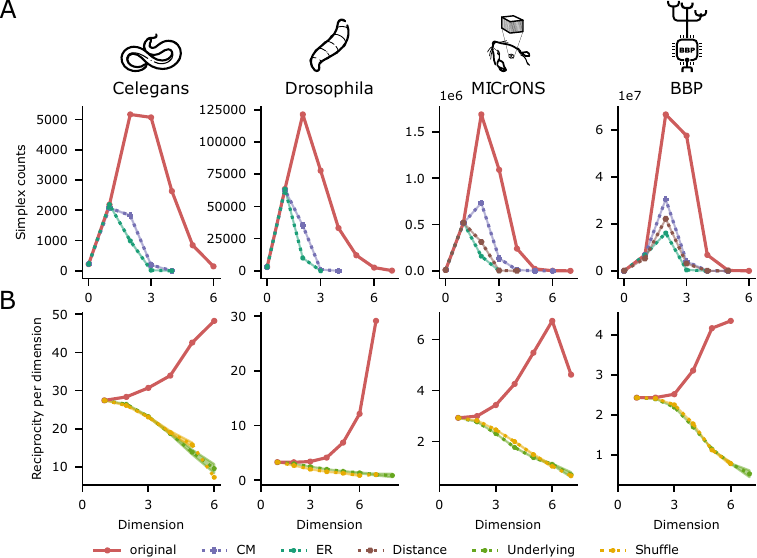}
\end{center}
\caption{\textbf{Overexpression of directed motifs} 
\textbf{A:} Simplex counts for each dimension for several networks at cellular resolution: the mature C. elegans, the Drosophila larva, an electron microscopic reconstruction of a small volume of mouse cortical connectivity (MICrONS) and a morphologically detailed in-silico model of a small volume of rat cortical connectivity.
These counts are contrasted to the ones of corresponding random controls of different types: Erd\H{o}s–R\'enyi, configuration model, and distance dependent graphs, which by design match the counts of the original network for dimensions~$0$ and $1$ (i.e. with the same number of vertices and edges). 
\textbf{B:} Percentage of reciprocal connections in the subgraph of simplices of each dimension contrasted with the same curve for controls where just the directionality of the connections is modified. 
For both panels, for the various controls, dots denote mean values in that dimension and shaded regions are the standard error of the mean. 
Reproduced from \cite{santander2025heterogeneous}, with permission from the authors.}
\label{fig:reliab_fig}
\end{figure}

\subsection{Applications to the sciences}

We have shown that counting linear extensions of the poset $\bn$ is equivalent to counting $n$-permutations with certain restrictions on their inversion set, or counting the number of transitive $n$-tournaments (or directed $n$-simplices) on digraphs built from adding reciprocal edges to a single transitive tournament (or directed $n$-simplex).
The latter is a key question in the sciences, particularly neuroscience, where the structure of a network shapes its function. The former can be characterised by the under- or over-expression of directed $n$-simplices. Specifically, across species and across scales, neural circuits are highly organised, and this organisation can be quantified using (directed) simplex counts \citep{sizemore2018cliques, tadic2019functional, sizemore2019importance, andjelkovic2020topology, shi2021computing}.

More precisely at the cellular scale, consider the directed graph built from a neural circuit, where the vertices represent neurons and there is an edge from neuron $a$ to neuron $b$ if there is at least one synaptic connection from $a$ to $b$.  It has been shown, that these networks are sparse, have long tailed degree distributions and have an over-expression of both reciprocal connections and directed simplices, particularly in the higher dimensions \cite{towlson2013rich, sizemore2019importance, reimann2024modeling, santander2025heterogeneous}.  See Figure~\ref{fig:reliab_fig}A for an example across species.
Moreover, \cite{santander2025heterogeneous} shows that the locations of reciprocal connections within the network are not random, but rather that they preferentially lie within simplices and that this preference increases with the dimension of the simplex, see Figure \ref{fig:reliab_fig}B.  Furthermore, \cite{BarrosHumanvRat} shows that the location of reciprocal connections drives a substantial part of the structure that differentiates human from rodent network connectivity.

However, given the effect of the addition of reciprocal connections on the number of directed $n$-simplices, one could wonder if such preference is merely a combinatorial artefact.
In \cite{santander2025heterogeneous} and~\cite{SantoroThesis}, random controls that shuffle the locations of reciprocal connections in the network are used to empirically show that this is not the case.
The question remains however, on how to provide theoretical guarantees.  
Thus, a natural question arises of how much of the higher dimensional structure of biological neural networks is driven by reciprocal connections and their location?
A related question is addressed in \cite{unger2024mcmc}, where they attempt to sample networks at random with approximately fixed directed simplex counts from the set of directed networks with the same underlying undirected network. They found indeed, that for certain architectures a large part of the structure can arise from the location of reciprocal connections. 
Question \ref{que:1} addresses the simplest version of this inquiry, by simplifying the underlying graph to its bare bones and setting it to be simply a directed $n$-simplex, rather than an arbitrary directed acyclic graph.

The translation of the question to the world of posets, shows that answering it analytically is $\#P$-complete.
However, it provides tools for uniformly sampling these structures, which can yield better numerical approximations of expected and unexpected behaviours.
Moreover, our formulas and related work could serve as building blocks for constructive or engineering methods to generate posets, and therefore networks, with certain global structure and specified local directed clique counts, by controlling the skeleton and the structure of the modular components. For example, suppose we require a directed graph consisting of four modules arranged in a $4$-cycle (see Figure~\ref{fig:construct_graph}), with each module having a prescribed size and a specified number of directed cliques. We can construct such a graph by using our formulas, together with existing results, to generate posets of the required size and number of linear extensions in each module, and then using their corresponding digraphs to build the required graph.

\begin{figure}[h]
\centering
\begin{tikzpicture}[scale=1]
\node (figC) at (-2.5,2.9){\small \textbf{A}};
  \node[circle, draw=black, radius=3pt, inner sep=2pt, text width=13mm, align=center] (a) at (-1,1){\small $n=5$ $C_n=10$};
  \node[circle, draw=black, radius=3pt, inner sep=2pt, text width=13mm, align=center] (b) at (1,1){\small $n=4$  $C_n=2$};
  \node[circle, draw=black, radius=3pt, inner sep=2pt, text width=13mm, align=center] (c) at (1,-1){\small $n=5$  $C_n=20$};
  \node[circle, draw=black, radius=3pt, inner sep=2pt, text width=13mm, align=center] (d) at (-1,-1){\small $n=3$, $C_n=6$};
  \node (A) at (-2,2){\small $W$};
  \node (B) at (2,2){\small $X$};
  \node (C) at (2,-2){\small $Y$};
  \node (D) at (-2,-2){\small $Z$};
  \draw[->-,thick] (a) -- (b);
  \draw[->-,thick] (b) -- (c);
  \draw[->-,thick] (c) -- (d);
  \draw[->-,thick] (d) -- (a);
  \node (dummy) at (0,-2.7){\mbox{}};
\end{tikzpicture}
\hfill
\begin{tikzpicture}[scale=1]
\node (figB) at (-2,3.2){\small \textbf{B}};
\begin{scope}[shift={(-1,.75)}]
  \node (1) at (0,1.5) {$P_5(R_W)$};
  \node[circle, fill=black, radius=3pt, inner sep=0pt, minimum size=4pt, fill=blue] (1) at (-0.5,0) {};
  \node[circle, fill=black, radius=3pt, inner sep=0pt, minimum size=4pt, fill=Dandelion] (2) at (0.5,0) {};
  \node[circle, fill=black, radius=3pt, inner sep=0pt, minimum size=4pt, fill=WildStrawberry] (3) at (.5, .5) {};
  \node[circle, fill=black, radius=3pt, inner sep=0pt, minimum size=4pt, fill=JungleGreen] (4) at (0, .5) {};
  \node[circle, fill=black, radius=3pt, inner sep=0pt, minimum size=4pt, fill=Periwinkle] (5) at (0, 1) {};
  \draw (5) -- (4) -- (2);
  \draw (4) -- (1);
\end{scope}
\begin{scope}[shift={(1,.75)}]
  \node (1) at (0,1.5) {$P_4(R_X)$};
  \node[circle, fill=black, radius=3pt, inner sep=0pt, minimum size=4pt, fill=Orchid] (1) at (-0.5,0) {};
  \node[circle, fill=black, radius=3pt, inner sep=0pt, minimum size=4pt, fill=Maroon] (2) at (0.5,0) {};
  \node[circle, fill=black, radius=3pt, inner sep=0pt, minimum size=4pt, fill=Cerulean] (4) at (0, .5) {};
  \node[circle, fill=black, radius=3pt, inner sep=0pt, minimum size=4pt, fill=NavyBlue] (5) at (0, 1) {};
  \draw (5) -- (4) -- (2);
  \draw (4) -- (1);
\end{scope}
\begin{scope}[shift={(1,-.75)}]
  \node (1) at (0.25,-1) {$P_5(R_Y)$};
  \node[circle, fill=black, radius=3pt, inner sep=0pt, minimum size=4pt, fill=Blue] (1) at (-0.5,0) {};
  \node[circle, fill=black, radius=3pt, inner sep=0pt, minimum size=4pt, fill=Goldenrod] (2) at (0.5,0) {};
  \node[circle, fill=black, radius=3pt, inner sep=0pt, minimum size=4pt, fill=Lavender] (3) at (1, .25) {};
  \node[circle, fill=black, radius=3pt, inner sep=0pt, minimum size=4pt, fill=BrickRed] (4) at (-0.5, .5) {};
  \node[circle, fill=black, radius=3pt, inner sep=0pt, minimum size=4pt, fill=CadetBlue] (5) at (0.5, .5) {};
  \draw (1) -- (4) -- (2) -- (5) -- (1);
\end{scope}
\begin{scope}[shift={(-1,-.75)}]
  \node (1) at (0,-1) {$P_3(R_Z)$};
  \node[circle, fill=black, radius=3pt, inner sep=0pt, minimum size=4pt, fill=Yellow] (1) at (-0.5,0.25) {};
  \node[circle, fill=black, radius=3pt, inner sep=0pt, minimum size=4pt, fill=Tan] (2) at (0,0.25) {};
  \node[circle, fill=black, radius=3pt, inner sep=0pt, minimum size=4pt, fill=YellowGreen] (3) at (.5, 0.25) {};
\end{scope}
\node (dummy) at (0,-2.4){\mbox{}};
\end{tikzpicture}
\hfill
\begin{tikzpicture}[scale=1]
\node (figC) at (-2.75,2.95){\small \textbf{C}};
\begin{scope}[shift={(-1.5,1.5)}]
  \node[circle, fill=black, radius=3pt, inner sep=0pt, minimum size=4pt,fill=WildStrawberry] (g3a) at (0:.75cm){};
  \node[circle, fill=black, radius=3pt, inner sep=0pt, minimum size=4pt,fill=Periwinkle] (g5a) at (72:.75cm){};
  \node[circle, fill=black, radius=3pt, inner sep=0pt, minimum size=4pt,fill=JungleGreen] (g4a) at (144:.75cm){};
  \node[circle, fill=black, radius=3pt, inner sep=0pt, minimum size=4pt,fill=blue] (g1a) at (216:.75cm){};
  \node[circle, fill=black, radius=3pt, inner sep=0pt, minimum size=4pt,fill=Dandelion] (g2a) at (288:.75cm){};
  \node (A) at (0,1){\small $A_5(R_W)$};
\end{scope}
\begin{scope}[shift={(1.5,1.5)}]
  \node[circle, fill=black, radius=3pt, inner sep=0pt, minimum size=4pt,fill=Orchid] (g1b) at (0:.75cm){};
  \node[circle, fill=black, radius=3pt, inner sep=0pt, minimum size=4pt,fill=Maroon] (g2b) at (90:.75cm){};
  \node[circle, fill=black, radius=3pt, inner sep=0pt, minimum size=4pt,fill=Cerulean] (g3b) at (180:.75cm){};
  \node[circle, fill=black, radius=3pt, inner sep=0pt, minimum size=4pt,fill=NavyBlue] (g4b) at (270:.75cm){};
  \node (A) at (0,1){\small $A_4(R_X)$};
\end{scope}
\begin{scope}[shift={(1.5,-1.5)}]
  \node[circle, fill=black, radius=3pt, inner sep=0pt, minimum size=4pt,fill=Lavender] (g3c) at (0:.75cm){};
  \node[circle, fill=black, radius=3pt, inner sep=0pt, minimum size=4pt,fill=CadetBlue] (g5c) at (72:.75cm){};
  \node[circle, fill=black, radius=3pt, inner sep=0pt, minimum size=4pt,fill=BrickRed] (g4c) at (144:.75cm){};
  \node[circle, fill=black, radius=3pt, inner sep=0pt, minimum size=4pt,fill=Blue] (g1c) at (216:.75cm){};
  \node[circle, fill=black, radius=3pt, inner sep=0pt, minimum size=4pt,fill=Goldenrod] (g2c) at (288:.75cm){};
  \node (A) at (0,-1){\small $A_5(R_Y)$};
\end{scope}
\begin{scope}[shift={(-1.5,-1.5)}]
  \node[circle, fill=black, radius=3pt, inner sep=0pt, minimum size=4pt,fill=Yellow] (g1d) at (0:.75cm){};
  \node[circle, fill=black, radius=3pt, inner sep=0pt, minimum size=4pt,fill=Tan] (g2d) at (120:.75cm){};
  \node[circle, fill=black, radius=3pt, inner sep=0pt, minimum size=4pt,fill=YellowGreen] (g3d) at (240:.75cm){};
  \node (A) at (0,-1){\small $A_3(R_Z)$};
\end{scope}

\begin{pgfonlayer}{bg}
\def\br{30}
\draw[->-,Apricot!\br] (g1a) to (g1b);
\draw[->-,Apricot!\br] (g1a) to (g2b);
\draw[->-,Apricot!\br] (g1a) to (g3b);
\draw[->-,Apricot!\br] (g1a) to (g4b);
\draw[->-,Apricot!\br] (g2a) to (g1b);
\draw[->-,Apricot!\br] (g2a) to (g2b);
\draw[->-,Apricot!\br] (g2a) to (g3b);
\draw[->-,Apricot!\br] (g2a) to (g4b);
\draw[->-,Apricot!\br] (g3a) to (g1b);
\draw[->-,Apricot!\br] (g3a) to (g2b);
\draw[->-,Apricot!\br] (g3a) to (g3b);
\draw[->-,Apricot!\br] (g3a) to (g4b);
\draw[->-,Apricot!\br] (g4a) to (g1b);
\draw[->-,Apricot!\br] (g4a) to (g2b);
\draw[->-,Apricot!\br] (g4a) to (g3b);
\draw[->-,Apricot!\br] (g4a) to (g4b);
\draw[->-,Apricot!\br] (g5a) to (g1b);
\draw[->-,Apricot!\br] (g5a) to (g2b);
\draw[->-,Apricot!\br] (g5a) to (g2b);
\draw[->-,Apricot!\br] (g5a) to (g3b);
\draw[->-,Apricot!\br] (g1b) to (g1c);
\draw[->-,Apricot!\br] (g1b) to (g2c);
\draw[->-,Apricot!\br] (g1b) to (g3c);
\draw[->-,Apricot!\br] (g1b) to (g4c);
\draw[->-,Apricot!\br] (g1b) to (g5c);
\draw[->-,Apricot!\br] (g2b) to (g1c);
\draw[->-,Apricot!\br] (g2b) to (g2c);
\draw[->-,Apricot!\br] (g2b) to (g3c);
\draw[->-,Apricot!\br] (g2b) to (g4c);
\draw[->-,Apricot!\br] (g2b) to (g5c);
\draw[->-,Apricot!\br] (g3b) to (g1c);
\draw[->-,Apricot!\br] (g3b) to (g2c);
\draw[->-,Apricot!\br] (g3b) to (g3c);
\draw[->-,Apricot!\br] (g3b) to (g4c);
\draw[->-,Apricot!\br] (g3b) to (g5c);
\draw[->-,Apricot!\br] (g4b) to (g1c);
\draw[->-,Apricot!\br] (g4b) to (g2c);
\draw[->-,Apricot!\br] (g4b) to (g3c);
\draw[->-,Apricot!\br] (g4b) to (g4c);
\draw[->-,Apricot!\br] (g4b) to (g5c);
\draw[->-,Apricot!\br] (g1c) to (g1d);
\draw[->-,Apricot!\br] (g1c) to (g2d);
\draw[->-,Apricot!\br] (g1c) to (g3d);
\draw[->-,Apricot!\br] (g2c) to (g1d);
\draw[->-,Apricot!\br] (g2c) to (g2d);
\draw[->-,Apricot!\br] (g2c) to (g3d);
\draw[->-,Apricot!\br] (g3c) to (g1d);
\draw[->-,Apricot!\br] (g3c) to (g2d);
\draw[->-,Apricot!\br] (g3c) to (g3d);
\draw[->-,Apricot!\br] (g4c) to (g1d);
\draw[->-,Apricot!\br] (g4c) to (g2d);
\draw[->-,Apricot!\br] (g4c) to (g3d);
\draw[->-,Apricot!\br] (g5c) to (g1d);
\draw[->-,Apricot!\br] (g5c) to (g2d);
\draw[->-,Apricot!\br] (g5c) to (g3d);
\draw[->-,Apricot!\br] (g1d) to (g1a);
\draw[->-,Apricot!\br] (g1d) to (g2a);
\draw[->-,Apricot!\br] (g1d) to (g3a);
\draw[->-,Apricot!\br] (g1d) to (g4a);
\draw[->-,Apricot!\br] (g1d) to (g5a);
\draw[->-,Apricot!\br] (g2d) to (g1a);
\draw[->-,Apricot!\br] (g2d) to (g2a);
\draw[->-,Apricot!\br] (g2d) to (g3a);
\draw[->-,Apricot!\br] (g2d) to (g4a);
\draw[->-,Apricot!\br] (g2d) to (g5a);
\draw[->-,Apricot!\br] (g3d) to (g1a);
\draw[->-,Apricot!\br] (g3d) to (g2a);
\draw[->-,Apricot!\br] (g3d) to (g3a);
\draw[->-,Apricot!\br] (g3d) to (g4a);
\draw[->-,Apricot!\br] (g3d) to (g5a);
\end{pgfonlayer}

  \draw[thick,-<>-] (g1a) to (g2a);
  \draw[thick,-<>-] (g2a) to (g3a);
  \draw[thick,-<>-] (g3a) to (g4a);
  \draw[thick,-<>-] (g1a) to (g3a);
  \draw[thick,-<>-] (g3a) to (g5a);
  \draw[thick,->-,gray] (g1a) to (g4a);
  \draw[thick,->-,gray] (g1a) to (g5a);
  \draw[thick,->-,gray] (g2a) to (g4a);
  \draw[thick,->-,gray] (g4a) to (g5a);
  
  \draw[thick,-<>-] (g2b) to (g3b);
  \draw[thick,-<>-] (g2b) to (g4b);
  \draw[thick,-<>-] (g3b) to (g4b);
  \draw[thick,->-,gray] (g1b) to (g2b);
  \draw[thick,->--,gray] (g1b) to (g3b);
  \draw[thick,->-,gray] (g1b) to (g4b);
  
  \draw[thick,-<>-] (g1c) to (g2c);
  \draw[thick,-<>-] (g1c) to (g3c);
  \draw[thick,-<>-] (g1c) to (g4c);
  \draw[thick,-<>-] (g1c) to (g5c);
  \draw[thick,-<>-] (g2c) to (g3c);
  \draw[thick,-<>-] (g4c) to (g5c);
  \draw[thick,->-,gray] (g3c) to (g4c);
  \draw[thick,->-,gray] (g3c) to (g5c);
  \draw[thick,->-,gray] (g2c) to (g4c);
  \draw[thick,->-,gray] (g2c) to (g5c);
  
  \draw[thick,-<>-] (g1d) to (g2d);
  \draw[thick,-<>-] (g2d) to (g3d);
  \draw[thick,-<>-] (g1d) to (g3d);
 \end{tikzpicture}
 \caption{An example of how to construct a directed graph with a specified number of directed cliques within each module. \textbf{A}: Four modules connected in a cycle, with the specified number of vertices ($n$) and the required number of directed clique ($C_n$). \textbf{B}: Four posets whose number of linear extensions equals the required number of directed cliques for the corresponding module in \textbf{A}. \textbf{C}: A directed graph with the required number of directed cliques in each module, obtained by replacing each module with the directed graph corresponding to the poset in \textbf{B}.}\label{fig:construct_graph}
 \end{figure}
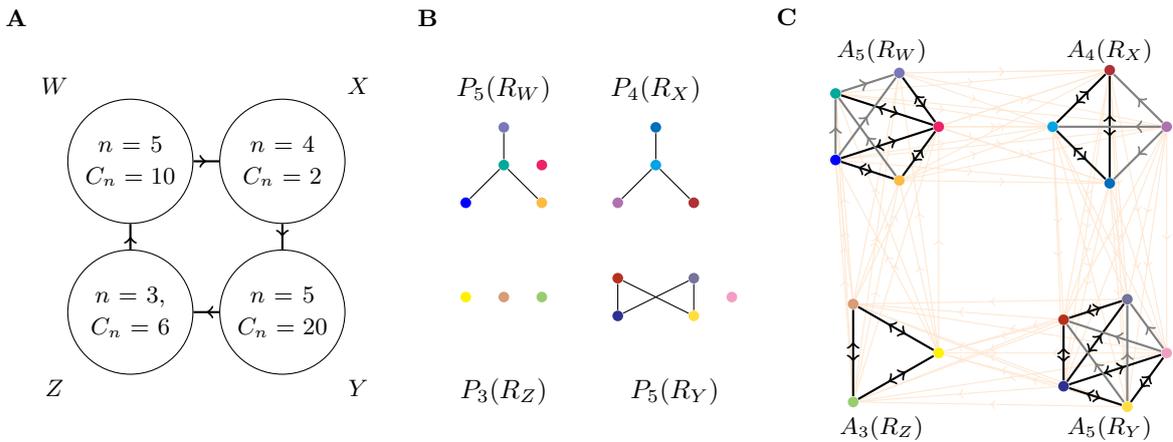

\section*{Acknowledgements}
The authors thank Kathryn Hess and J\=anis Lazovskis for insightful discussions.

\bibliographystyle{alpha}
\bibliography{bibfile}

\end{document}